 \newtheorem{proposition}{Proposition}[section]
\newtheorem{theorem}{Theorem}[section]
\newtheorem{lemma}{Lemma}[section]
\newtheorem{corollary}{Corollary}[section]
\newtheorem{example}{Example}[section]
\theoremstyle{definition}
\newtheorem{definition}{Definition}[section]
\newtheorem{remark}{Remark}[section]
\begin{document}

\title[Monotone-light factorization for $n$-categories]{The monotone-light factorization for $n$-categories via $n$-preorders}

 \author[Jo\~{a}o J. Xarez]{Jo\~{a}o J. Xarez}



 \address{CIDMA - Center for Research and Development in Mathematics and Applications,
Department of Mathematics, University of Aveiro, 3810-193 Aveiro, Portugal}

 \email{xarez@ua.pt}

 \subjclass[2020]{18M05,18D20,18A32,18E50,18N10}

 \keywords{symmetric monoidal categories, categorical Galois theory, monotone-light factorization, $n$-categories}


\begin{abstract}
Starting with a symmetric monoidal adjunction with certain properties, we derive another symmetric monoidal adjunction with the same properties between the respective categories of all $\mathcal{V}$-categories. If we begin with a reflection of a full replete subcategory, the derived adjunction is also a reflection of the same kind. Semi-left-exactness (also called admissibility in categorical Galois theory) or the stronger stable units property is inherited by the derived reflection. Applying these results, we conclude that the reflection of the category of all $n$-categories into the category of $n$-preorders has stable units. Then, it is also shown that this reflection determines a monotone-light factorization system on $n$-categories, $n\geq 1$, and that the light morphisms are precisely the $n$-functors faithful with respect to $n$-cells. In order to achieve such results, it was also shown that $n$-functors surjective both on vertically composable triples of horizontally composable pairs of $n$-cells, and on horizontally composable triples of vertically composable pairs of $n$-cells, are effective descent morphisms in the category of all $n$-categories $nCat$, $n\geq 1$.
\end{abstract}

\maketitle

\section{Introduction}\label{sec-introduction}

It will be shown that the reflection $nCat\rightarrow nPreord$ of the category of all $n$-categories into the category of $n$-preorders determines a monotone-light factorization system on $nCat$, for every positive integer $n$. In order to achieve such result it was also proved that the reflection $nCat\rightarrow nPreord$ has stable units, a stronger condition than admissibility in categorical Galois theory, and that the $n$-functors surjective both on
vertically composable triples of horizontally composable pairs of $n$-cells, and on horizontally composable triples of vertically composable pairs of $n$-cells (cf.\ Proposition \ref{proposition:EDM(nCat)}), are effective descent morphisms in $nCat$. This generalizes the results in our previous papers \cite{X:ml} and \cite{X:2ml}.
\\

The characterization of effective descent morphisms is a generalization of what was done in \cite{X:2ml}, considering  a convenient description of the category of all $n$-categories as a full subcategory of a presheaf category, and using some results in \cite{JST:edm}.
\\

In order to prove that the reflection of $n$-categories into $n$-preorders has stable units in the sense of \cite{CHK:fact} (see \cite{CJKP:stab}), we took a very different path from the one trailed in \cite{X:2ml}. In fact, we considered the category of all $n$-categories in the context of $\mathcal{V}$-categories (see \cite{Kelly:enriched_cat}). Remark that, considering $\mathcal{V} = Set$ the category of sets and then iterating we obtain $n$-categories.

Beginning with a symmetric monoidal adjunction $\mathbb{C}\rightarrow\mathbb{X}$ with certain properties, we derive another symmetric monoidal adjunction $\mathbb{C}$-$Cat\rightarrow\mathbb{X}$-$Cat$ with the same properties between categories of all $\mathcal{V}$-categories ($\mathcal{V}\in\{\mathbb{C},\ \mathbb{X}\}$). In this context, if we begin with a reflection such that $\mathbb{X}$ is a full replete subcategory of $\mathbb{C}$, the derived adjunction is also a reflection in which $\mathbb{X}$-$Cat$ is a full replete subcategory of $\mathbb{C}$-$Cat$. It follows trivially, according to the way limits are calculated in $\mathbb{C}$-$Cat$, provided they exist in $\mathbb{C}$, that semi-left-exactness (also called admissibility in categorical Galois theory) or the stable units property is inherited by the derived reflection. For instance, if we begin with the stable units reflection from the category of all categories into the category of all preorders, then iterating we end with a stable units reflection from $nCat$ into $nPreord$.\\

The paper is organized as follows.


Consider \emph{a base monoidal adjunction}, that is, an adjunction $(A)$ $$(F,G,\eta,\varepsilon):(\mathbb{C},\otimes,E,\alpha,\gamma,\rho)\rightarrow (\mathbb{X},\lozenge,I,\mathfrak{a},t,r)$$ such that:
\begin{itemize}
\item[$(B)$] $F$ and $G$ are (strict) morphisms of (symmetric) monoidal categories,

\item[$(C)$] for every pair of objects $X,Y\in\mathbb{X}$, the counit of the tensor product is equal to the tensor product of the counits, $\varepsilon_X\lozenge\varepsilon_Y=\varepsilon_{X\lozenge Y}:FG(X\lozenge Y)\rightarrow X\lozenge Y$, and

\item[$(D)$] the counit morphism of $I$ is equal to the identity morphism of $I$, $\varepsilon_I=1_I:FG(I)=I\rightarrow I$.
\end{itemize}

Starting from any base monoidal adjunction as defined (cf.\ section \ref{sec-base monoidal adjunction}), we will show how to obtain another derived base monoidal adjunction (cf.\ section \ref{section:derivedMonoidal}) between the $\mathcal{V}$-categories $\mathbb{C}$-$Cat$ and $\mathbb{X}$-$Cat$, the category of all $\mathbb{C}$-categories and the category of all $\mathbb{X}$-categories, respectively,

\begin{center}
$(\mathbb{F},\mathbb{G},\Theta,\Upsilon):
(\mathbb{C}$-$Cat,\bigcirc,\mathfrak{E},\wedge,\Gamma,\mho)\rightarrow(\mathbb{X}$-$Cat,\nabla,\mathcal{I},\vee,\top,\Re)$.
\end{center}

This is given and proved in detail from sections \ref{sec-base monoidal adjunction} to \ref{section:derived adjunction monoidal}.\\

In section \ref{section:limitsV-Cat}, it is shown that limits in $\mathbb{C}$-$Cat$ may be calculated ``hom-componentwise" in $\mathbb{C}$. It follows that the derived monoidal structure is cartesian if it is so for the starting monoidal structure (cf.\ Remark \ref{remark:cartesian monoidal}).\\

In section \ref{section:derivedReflection}, in order to apply the above results to categorical Galois theory (cf.\ \cite{G. Janelidze}), when $\mathbb{X}$ is a full replete subcategory of $\mathbb{C}$ and $G:\mathbb{X}\subseteq\mathbb{C}$ is the inclusion functor, we show that it is a special case of the base monoidal adjunction, which we call \emph{base monoidal reflection}, and that using the same process of deriving a new base monoidal adjunction we get a new base monoidal reflection.

Furthermore, if the initial base monoidal reflection is semi-left-exact, or has the stronger property of having stable units (notions introduced in \cite{CHK:fact}), the same is true for the derived monoidal reflection, as shown in section \ref{sec:Simp., semi-left-exact. and stable units}.

Hence, the process can be iterated any number of times.\\

The second part of the paper consists in applying the previous results to the base monoidal reflection $H\vdash I:Cat\rightarrow Preord$, which has stable units, to conclude that there is a reflection with stable units $nCat\rightarrow nPreord$, from the category of all $n$-categories (with cartesian monoidal structure), $n\in\mathbb{N}$. We also conclude that there is a non trivial monotone-light factorization associated to every reflection $nCat\rightarrow nPreord$, $n\in\mathbb{N}$, in a similar way to that used in \cite{X:2ml} to obtain the same results for the special case $n=2$ (cf.\ sections \ref{sec:nCat}, \ref{sec:edm in nCat} and \ref{sec:stableunits&ml nCat}).

It is also given a needed characterization of a class of effective descent morphisms in $nCat$ (cf.\ section \ref{sec:edm in nCat}); and characterizations of the classes of vertical and stably-vertical $n$-functors (which by being distinct classes imply the non-triviality of the monotone-light factorizations), trivial coverings and coverings (cf.\ section \ref{sec:Vertical and stably-vertical}, and sections \ref{sec:Trivial coverings} and \ref{sec:Coverings}, respectively), for every $n\in \mathbb{N}$.


\section{The base monoidal adjunction}\label{sec-base monoidal adjunction}

Consider an adjunction $(A)$
$$(F,G,\eta,\varepsilon):\mathbb{C}\rightarrow\mathbb{X},$$
such that

\noindent $(B)$ both the left-adjoint $F$ and the right-adjoint $G$ are (strict) \emph{morphisms of symmetric monoidal categories}, with respect to the \emph{monoidal categories}
$$(\mathbb{C},\otimes,E,\alpha,\gamma,\rho)\ and\ (\mathbb{X},\lozenge,I,\mathfrak{a},t,r),$$
meaning that:

$E\in\mathbb{C}$, $I\in\mathbb{X}$, $F(E)=I$ and $G(I)=E$;

$\otimes:\mathbb{C}\times\mathbb{C}\rightarrow\mathbb{C}$ and $\lozenge:\mathbb{X}\times\mathbb{X}\rightarrow\mathbb{X}$ are (bi)functors;

$F$ \emph{preserves} $\otimes$ and $G$ \emph{preserves} $\lozenge$, i.e.,
$$F(A\otimes B^{\underrightarrow{f\otimes g}}C\otimes D)=F(A)\lozenge F(B)^{\underrightarrow{Ff\lozenge Fg}}F(C)\lozenge F(D),$$
$$G(X\lozenge Y^{\underrightarrow{u\lozenge v}}Z\lozenge W)=G(X)\otimes G(Y)^{\underrightarrow{Gu\otimes Gv}}G(Z)\otimes G(W),$$
for all morphisms $f:A\rightarrow C$ and $g:B\rightarrow D$ in $\mathbb{C}$, $u:X\rightarrow Z$ and $v:Y\rightarrow W$ in $\mathbb{X}$;
$$\alpha_{A,B,C}:(A\otimes B)\otimes C\rightarrow A\otimes (B\otimes C),$$
$$\mathfrak{a}_{X,Y,Z}:(X\lozenge Y)\lozenge Z\rightarrow X\lozenge (Y\lozenge Z),$$
$$\gamma_{A,B}:A\otimes B\rightarrow B\otimes A,\ \rho_A :A\otimes E\rightarrow A,$$
$$t_{X,Y}:X\lozenge Y\rightarrow Y\lozenge X\ and\ r_X :X\lozenge I\rightarrow X$$
are natural isomorphisms subject to the \emph{coherence axioms}\footnote{Cf.\ \cite[\S VII.2]{SM:cat} for more information about the consequences of these axioms.} expressing the commutativity of the following diagrams, for all $A,B,C,D\in\mathbb{C}$ and all $X,Y,Z,W\in\mathbb{X}$),

\begin{picture}(370,55)\setlength{\unitlength}{0.285mm}

\put(-20,40){$((D\otimes A)\otimes B)\otimes C$}\put(105,43){\vector(1,0){37}}\put(95,55){$\alpha_{D\otimes A,B,C}$}
\put(145,40){$(D\otimes A)\otimes (B\otimes C)$}\put(270,43){\vector(1,0){37}}\put(260,55){$\alpha_{D,A,B\otimes C}$}
\put(310,40){$D\otimes (A\otimes (B\otimes C))$}

\put(20,33){\vector(0,-1){20}}\put(23,20){$\alpha_{D,A,B}\otimes 1_C$}
\put(290,20){$1_D\otimes\alpha_{A,B,C}$}\put(370,13){\vector(0,1){20}}

\put(-20,0){$(D\otimes (A\otimes B))\otimes C$}\put(105,03){\vector(1,0){203}}\put(150,10){$\alpha_{D, A\otimes B,C}$}\put(310,0){$D\otimes ((A\otimes B)\otimes C),$}

\end{picture}

\begin{picture}(370,65)

\put(-20,40){$(A\otimes E)\otimes B$}\put(52,43){\vector(1,0){60}}\put(67,47){$\alpha_{A,E,B}$}
\put(115,40){$A\otimes (E\otimes B)$}

\put(230,40){$A\otimes B$}\put(267,43){\vector(1,0){40}}\put(272,47){$\gamma_{A,B}$}\put(310,40){$B\otimes A$}

\put(40,33){\vector(1,-1){20}}\put(0,20){$\rho_A\otimes 1_B$}\put(65,0){$A\otimes B$\ \ ,}\put(130,33){\vector(-1,-1){20}}\put(130,20){$1_A\otimes(\rho_B\circ\gamma_{E,B})$}

\put(240,20){$1_{A\otimes B}$}\put(253,33){\vector(2,-1){50}}
\put(327,33){\vector(0,-1){20}}\put(332,20){$\gamma_{B,A}$}\put(310,0){$A\otimes B$}

\end{picture}

and

\begin{picture}(370,60)\setlength{\unitlength}{0.315mm}

\put(-20,40){$(A\otimes B)\otimes C$}\put(62,43){\vector(1,0){80}}\put(87,47){$\alpha_{A,B,C}$}
\put(145,40){$A\otimes (B\otimes C)$}\put(227,43){\vector(1,0){80}}\put(247,47){$\gamma_{A,B\otimes C}$}
\put(310,40){$(B\otimes C)\otimes A$}

\put(20,33){\vector(0,-1){20}}\put(23,20){$\gamma_{A,B}\otimes 1_C$}
\put(310,20){$\alpha_{B,C,A}$}\put(350,33){\vector(0,-1){20}}\put(365,20){in $\mathbb{C}$,}

\put(-20,0){$(B\otimes A)\otimes C$}\put(62,3){\vector(1,0){80}}\put(87,7){$\alpha_{B,A,C}$}
\put(145,0){$B\otimes (A\otimes C)$}\put(227,3){\vector(1,0){80}}\put(242,7){$1_B\otimes\gamma_{A,C}$}
\put(310,0){$B\otimes (C\otimes A)$}

\end{picture}

\begin{picture}(370,60)\setlength{\unitlength}{0.3mm}

\put(-20,40){$((W\lozenge X)\lozenge Y)\lozenge Z$}\put(82,43){\vector(1,0){60}}\put(87,47){$\mathfrak{a}_{W\lozenge X,Y,Z}$}
\put(145,40){$(W\lozenge X)\lozenge (Y\lozenge Z)$}\put(247,43){\vector(1,0){60}}\put(252,47){$\mathfrak{a}_{W,X,Y\lozenge Z}$}
\put(310,40){$W\lozenge (X\lozenge (Y\lozenge Z))$}

\put(20,33){\vector(0,-1){20}}\put(23,20){$\mathfrak{a}_{W,X,Y}\lozenge 1_Z$}
\put(305,20){$1_W\lozenge \mathfrak{a}_{X,Y,Z}$}\put(370,13){\vector(0,1){20}}

\put(-20,0){$(W\lozenge (X\lozenge Y))\lozenge Z$}\put(82,03){\vector(1,0){226}}\put(150,7){$\mathfrak{a}_{W, X\lozenge Y,Z}$}\put(310,0){$W\lozenge ((X\lozenge Y)\lozenge Z),$}

\end{picture}

\begin{picture}(370,60)

\put(-20,40){$(X\lozenge I)\lozenge Y$}\put(52,43){\vector(1,0){60}}\put(67,47){$\mathfrak{a}_{X,I,Y}$}
\put(115,40){$X\lozenge (I\lozenge Y)$}

\put(220,40){$X\lozenge Y$}\put(257,43){\vector(1,0){40}}\put(262,47){$t_{X,Y}$}\put(300,40){$Y\lozenge X$}

\put(40,33){\vector(1,-1){20}}\put(0,20){$r_X\lozenge 1_Y$}\put(65,0){$X\lozenge Y$\ \ ,}\put(130,33){\vector(-1,-1){20}}\put(130,20){$1_X\lozenge(r_Y\circ t_{I,Y})$}

\put(230,20){$1_{X\lozenge Y}$}\put(243,33){\vector(2,-1){50}}
\put(317,33){\vector(0,-1){20}}\put(322,20){$t_{Y,X}$}\put(300,0){$X\lozenge Y$}

\end{picture}

and

\begin{picture}(370,60)\setlength{\unitlength}{0.31mm}

\put(-20,40){$(X\lozenge Y)\lozenge Z$}\put(50,43){\vector(1,0){90}}\put(87,47){$\mathfrak{a}_{X,Y,Z}$}
\put(145,40){$X\lozenge (Y\lozenge Z)$}\put(215,43){\vector(1,0){90}}\put(247,47){$t_{X,Y\lozenge Z}$}
\put(310,40){$(Y\lozenge Z)\lozenge X$}

\put(20,33){\vector(0,-1){20}}\put(23,20){$t_{X,Y}\lozenge 1_Z$}
\put(315,20){$\mathfrak{a}_{Y,Z,X}$}\put(350,33){\vector(0,-1){20}}\put(365,20){in $\mathbb{X}$;}

\put(-20,0){$(Y\lozenge X)\lozenge Z$}\put(50,3){\vector(1,0){90}}\put(87,7){$\mathfrak{a}_{Y,X,Z}$}
\put(145,0){$Y\lozenge (X\lozenge Z)$}\put(215,3){\vector(1,0){90}}\put(242,7){$1_Y\lozenge t_{X,Z}$}
\put(310,0){$Y\lozenge (Z\lozenge X)$}

\end{picture}\\

$F\alpha_{A,B,C}=\mathfrak{a}_{F(A),F(B),F(C)}$, $G\mathfrak{a}_{X,Y,Z}=\alpha_{G(X),G(Y),G(Z)}$, $F\gamma_{A,B}=t_{F(A),F(B)}$, $Gt_{X,Y}=\gamma_{G(X),G(Y)}$, $F\rho_A=r_{F(A)}$ and $Gr_X=\rho_{G(X)}$, for all $A,B,C\in\mathbb{C}$ and all $X,Y,Z\in\mathbb{X}$.\\

It is also assumed that, for every pair of objects $A,B\in\mathbb{C}$:\\

$(C)$ $\eta_A\otimes\eta_B=\eta_{A\otimes B}: A\otimes B\rightarrow GF(A\otimes B)$, \emph{the tensor product of two unit morphisms is the unit morphism of the tensor product}; and\\

$(D)$ $\eta_E=1_E:E\rightarrow E=GF(E)$, \emph{the unit morphism of the identity object of the monoidal category $\mathbb{C}$ is the identity morphism}.\\

The following Proposition \ref{proposition:duality eta epsilon} will be useful in identifying the monoidal adjunctions satisfying the conditions $(C)$ and $(D)$ just above.

\begin{proposition}\label{proposition:duality eta epsilon}

Consider an adjunction $(F,G,\eta,\varepsilon):\mathbb{C}\rightarrow\mathbb{X}$.
\begin{enumerate}

\item[$(i)$] If $F$ and $G$ preserve respectively the bifunctors $\otimes:\mathbb{C}\times\mathbb{C}\rightarrow\mathbb{C}$ and $\lozenge:\mathbb{X}\times\mathbb{X}\rightarrow\mathbb{X}$, then $$(\forall_{A,B\in\mathbb{C}}\ \eta_A\otimes\eta_B=\eta_{A\otimes B})\Leftrightarrow (\forall_{X,Y\in\mathbb{X}}\ \varepsilon_X\lozenge\varepsilon_Y=\varepsilon_{X\lozenge Y}).$$

\item[$(ii)$] If $F(E)=I$ and $G(I)=E$, then $$\eta_E=1_E\Leftrightarrow \varepsilon_I=1_I;$$ where $\eta_E$, $1_E$ are the unit and identity morphisms of $E\in\mathbb{C}$, and $\varepsilon_I$, $1_I$ are the counit and identity morphisms of $I\in\mathbb{X}$.

\end{enumerate}
\end{proposition}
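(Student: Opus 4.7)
The plan is to prove both parts by pulling morphisms across the adjunction bijection
$\mathrm{Hom}_{\mathbb{X}}(F(-),-)\cong \mathrm{Hom}_{\mathbb{C}}(-,G(-))$ and invoking the triangle identities $G\varepsilon_X\circ\eta_{G(X)}=1_{G(X)}$ and $\varepsilon_{F(A)}\circ F\eta_A=1_{F(A)}$. The key preliminary observation is that, because $F$ strictly preserves $\otimes$ and $G$ strictly preserves $\lozenge$, we have on the nose the identifications $G(X\lozenge Y)=G(X)\otimes G(Y)$ and $FG(X\lozenge Y)=FG(X)\lozenge FG(Y)$, so both $\varepsilon_X\lozenge\varepsilon_Y$ and $\varepsilon_{X\lozenge Y}$ are parallel arrows $FG(X\lozenge Y)\to X\lozenge Y$; analogously for the unit side.

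For the forward direction of $(i)$, I will compute the mate of $\varepsilon_X\lozenge\varepsilon_Y$ under the adjunction, namely
\[
G(\varepsilon_X\lozenge\varepsilon_Y)\circ\eta_{G(X\lozenge Y)}
=(G\varepsilon_X\otimes G\varepsilon_Y)\circ\eta_{G(X)\otimes G(Y)},
\]
using that $G$ preserves $\lozenge$ strictly. By hypothesis, $\eta_{G(X)\otimes G(Y)}=\eta_{G(X)}\otimes\eta_{G(Y)}$, so the right-hand side becomes $(G\varepsilon_X\circ\eta_{G(X)})\otimes(G\varepsilon_Y\circ\eta_{G(Y)})$, which equals $1_{G(X)}\otimes 1_{G(Y)}=1_{G(X\lozenge Y)}$ by the triangle identity. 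Since $\varepsilon_{X\lozenge Y}$ also has mate $1_{G(X\lozenge Y)}$, the equality $\varepsilon_X\lozenge\varepsilon_Y=\varepsilon_{X\lozenge Y}$ follows from the bijectivity of the adjunction correspondence. The reverse implication is entirely dual: take the mate of $\eta_A\otimes\eta_B:A\otimes B\to GF(A\otimes B)$ under the other half of the adjunction, reduce using $\varepsilon_{F(A)\lozenge F(B)}=\varepsilon_{F(A)}\lozenge\varepsilon_{F(B)}$, and conclude by the dual triangle identity.

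For $(ii)$, the argument is shorter and direct. Assuming $\eta_E=1_E$, apply $F$ and post-compose with $\varepsilon_I$: the triangle identity gives $\varepsilon_I=\varepsilon_{F(E)}\circ F(1_E)=\varepsilon_{F(E)}\circ F\eta_E=1_{F(E)}=1_I$. Conversely, assuming $\varepsilon_I=1_I$, apply $G$ and pre-compose with $\eta_E$: the other triangle identity gives $\eta_E=G(1_I)\circ\eta_{G(I)}=G\varepsilon_I\circ\eta_{G(I)}=1_{G(I)}=1_E$.

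I do not anticipate a genuine obstacle here; the only point that requires some care is bookkeeping the strict equalities of objects that come from $F$ and $G$ being strict monoidal morphisms, since without these equalities the very statements $\eta_A\otimes\eta_B=\eta_{A\otimes B}$ and $\varepsilon_X\lozenge\varepsilon_Y=\varepsilon_{X\lozenge Y}$ would have to be interpreted up to coherence isomorphisms, turning the argument into a diagram chase. In the strict setting assumed in $(B)$, everything reduces to a direct use of the triangle identities.
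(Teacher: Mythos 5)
Your proposal is correct and follows essentially the same route as the paper: in part $(i)$ you show that $\varepsilon_X\lozenge\varepsilon_Y$ and $\varepsilon_{X\lozenge Y}$ have the same adjoint transpose $1_{G(X\lozenge Y)}$ (using strict preservation of the tensor, bifunctoriality, the hypothesis on the units, and the triangle identity) and conclude by universality of $\eta_{G(X\lozenge Y)}$, with the converse dual; part $(ii)$ is the same direct application of the triangle identities. No gaps.
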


\begin{proof}\

$(i)$

($\Rightarrow$) $1_{G(X\lozenge Y)}=G\varepsilon_{X\lozenge Y}\circ\eta_{G(X\lozenge Y)}$ (by $(A)$) and

$1_{G(X\lozenge Y)}=1_{G(X)\otimes G(Y)}$ (by $(B)$)

$=1_{G(X)}\otimes 1_{G(Y)}$ ($\otimes$ is a bifunctor)

$=(G\varepsilon_X\circ \eta_{G(X)})\otimes (G\varepsilon_Y\circ\eta_{G(Y)})$ (by $(A)$)

$=(G\varepsilon_X\otimes G\varepsilon_Y)\circ (\eta_{G(X)}\otimes\eta_{G(Y)})$ ($\otimes$ is a bifunctor)

$=G(\varepsilon_X\lozenge\varepsilon_Y)\circ (\eta_{G(X)}\otimes\eta_{G(Y)})$ (by $(B)$)

$=G(\varepsilon_X\lozenge\varepsilon_Y)\circ \eta_{G(X)\otimes G(Y)}$ (hypothesis)

$=G(\varepsilon_X\lozenge\varepsilon_Y)\circ \eta_{G(X\lozenge Y)}$ (by $(B)$)

imply that $\varepsilon_{X\lozenge Y}=\varepsilon_X\lozenge\varepsilon_Y$, because

$G\varepsilon_{X\lozenge Y}\circ\eta_{G(X\lozenge Y)}=G(\varepsilon_X\lozenge\varepsilon_Y)\circ\eta_{G(X\lozenge Y)}$ and $\eta_{G(X\lozenge Y)}$ is a unit morphism in the adjunction $(A)$;

($\Leftarrow$) $1_{F(A\otimes B)}=\varepsilon_{F(A\otimes B)}\circ F\eta_{A\otimes B}$ (by $(A)$) and

$1_{F(A\otimes B)}=1_{F(A)\lozenge F(B)}$ (by $(B)$)

$=1_{F(A)}\lozenge 1_{F(B)}$ ($\lozenge$ is a bifunctor)

$=(\varepsilon_{F(A)}\circ F\eta_{A})\lozenge (\varepsilon_{F(B)}\circ F\eta_{B})$ (by $(A)$)

$=(\varepsilon_{F(A)}\lozenge \varepsilon_{F(B)})\circ (F\eta_{A}\lozenge F\eta_{B})$ ($\lozenge$ is a bifunctor)

$=(\varepsilon_{F(A)}\lozenge \varepsilon_{F(B)})\circ F(\eta_{A}\otimes\eta_{B})$ (by $(B)$)

$=\varepsilon_{F(A)\lozenge F(B)}\circ F(\eta_{A}\otimes\eta_{B})$ (hypothesis)

$=\varepsilon_{F(A\otimes B)}\circ F(\eta_{A}\otimes\eta_{B})$ (by $(B)$)

imply that $\eta_{A\otimes B}=\eta_A\otimes\eta_B$, because

$\varepsilon_{F(A\otimes B)}\circ F\eta_{A\otimes B}=\varepsilon_{F(A\otimes B)}\circ F(\eta_A\otimes\eta_B)$ and $\varepsilon_{F(A\otimes B)}$ is a counit morphism in the adjunction $(A)$.

$(ii)$

($\Rightarrow$) $\varepsilon_{F(E)}\circ F\eta_E=1_{F(E)}$, because $(F,G,\eta,\varepsilon)$ is an adjunction (A),

$\Leftrightarrow\varepsilon_I\circ F\eta_E=1_{I}$, because $F(E)=I$ by (B),

$\Leftrightarrow\varepsilon_I=1_{I}$, because $\eta_E=1_E$ by hypothesis;

($\Leftarrow$) $G\varepsilon_I\circ \eta_{G(I)}=1_{G(I)}$, because $(F,G,\eta,\varepsilon)$ is an adjunction (A),

$\Leftrightarrow G\varepsilon_I\circ \eta_E=1_E$, because $G(I)=E$ by (B),

$\Leftrightarrow \eta_E=1_E$, because $\varepsilon_I=1_I$ by hypothesis.
\end{proof}

The data in this section \ref{sec-base monoidal adjunction} will be called \emph{the base monoidal adjunction}.

\section{Categories of all $\mathcal{V}$-categories ($\mathcal{V}=\mathbb{C},\mathbb{X}$)}\label{sec-Vcats}

A $\mathbb{C}$-category\footnote{Usually called $\mathcal{V}$-category, cf.\ \cite[\S 1.2]{Kelly:enriched_cat}. We will make $\mathcal{V}=\mathbb{C}$ or $\mathcal{V}=\mathbb{X}$ in the first part of the present paper.} $\mathcal{A}$ consists of a set of objects $ob(\mathcal{A})$, a \emph{hom-object} $\mathcal{A}(a,b)\in\mathbb{C}$ for each ordered pair $(a,b)$ of objects in $\mathcal{A}$, a \emph{composition law} $M^\mathcal{A}_{a,b,c}:\mathcal{A}(b,c)\otimes\mathcal{A}(a,b)\rightarrow\mathcal{A}(a,c)$ for each triple of objects, and an \emph{identity element} $j^\mathcal{A}_a:E\rightarrow\mathcal{A}(a,a)$ for each object, subject to the \emph{associativity} and \emph{unit axioms} expressed by the commutativity of the following two diagrams

\begin{picture}(370,60)\setlength{\unitlength}{0.315mm}

\put(-20,40){$(\mathcal{A}(c,d)\otimes \mathcal{A}(b,c))\otimes \mathcal{A}(a,b)$}\put(145,43){\vector(1,0){70}}
\put(135,55){$\alpha_{\mathcal{A}(c,d),\mathcal{A}(b,c),\mathcal{A}(a,b)}$}
\put(220,40){$\mathcal{A}(c,d)\otimes (\mathcal{A}(b,c)\otimes \mathcal{A}(a,b))$}

\put(20,33){\vector(0,-1){20}}\put(23,20){$M^\mathcal{A}_{b,c,d}\otimes 1_{\mathcal{A}(a,b)}$}
\put(300,33){\vector(0,-1){20}}\put(215,20){$1_{\mathcal{A}(c,d)}\otimes M^\mathcal{A}_{a,b,c}$}

\put(-20,0){$\mathcal{A}(b,d)\otimes\mathcal{A}(a,b)$}\put(80,3){\vector(1,0){60}}\put(90,-10){$M^\mathcal{A}_{a,b,d}$}
\put(150,0){$\mathcal{A}(a,d)$}\put(265,3){\vector(-1,0){70}}\put(220,-10){$M^\mathcal{A}_{a,c,d}$}
\put(270,0){$\mathcal{A}(c,d)\otimes \mathcal{A}(a,c)$,}

\end{picture}

\begin{picture}(370,80)\setlength{\unitlength}{0.33mm}

\put(-20,40){$\mathcal{A}(b,b)\otimes \mathcal{A}(a,b)$}\put(80,43){\vector(1,0){60}}\put(90,50){$M^\mathcal{A}_{a,b,b}$}
\put(150,40){$\mathcal{A}(a,b)$}\put(265,43){\vector(-1,0){70}}\put(220,50){$M^\mathcal{A}_{a,a,b}$}
\put(270,40){$\mathcal{A}(a,b)\otimes \mathcal{A}(a,a)$}

\put(20,13){\vector(0,1){20}}\put(23,20){$j^\mathcal{A}_b\otimes 1_{\mathcal{A}(a,b)}$}
\put(300,13){\vector(0,1){20}}\put(230,23){$1_{\mathcal{A}(a,b)}\otimes j^\mathcal{A}_a$}

\put(0,0){$E\otimes\mathcal{A}(a,b)$}\put(70,3){\vector(2,1){70}}
\put(95,10){$\rho_{\mathcal{A}(a,b)}\circ\gamma_{E,\mathcal{A}(a,b)}$}
\put(265,3){\vector(-2,1){70}}\put(200,10){$\rho_{\mathcal{A}(a,b)}$}
\put(270,0){$\mathcal{A}(a,b)\otimes E$.}

\end{picture}\\

For $\mathbb{C}$-categories $\mathcal{A}$ and $\mathcal{B}$, a $\mathbb{C}$-functor $T:\mathcal{A}\rightarrow \mathcal{B}$ consists of a function\footnote{In order to simplify the notation, we will use sometimes $T$ instead of $obT$, specially in diagrams.} $$obT:ob(\mathcal{A})\rightarrow ob(\mathcal{B})$$ together with, for each pair $a,b\in ob(\mathcal{A})$, a map $$T_{a,b}:\mathcal{A}(a,b)\rightarrow \mathcal{B}(T(a),T(b)),$$ subject to the \emph{compatibility with composition and with the identities} expressed by the commutativity of the following two diagrams,

\begin{picture}(360,60)\setlength{\unitlength}{0.3mm}

\put(45,40){$\mathcal{A}(b,c)\otimes \mathcal{A}(a,b)$}\put(145,43){\vector(1,0){60}}\put(155,50){$M^\mathcal{A}_{a,b,c}$}
\put(210,40){$\mathcal{A}(a,c)$}\put(335,40){$\mathcal{A}(a,a)$}

\put(85,33){\vector(0,-1){20}}\put(20,20){$T_{b,c}\otimes T_{a,b}$}
\put(225,33){\vector(0,-1){20}}\put(230,20){$T_{a,c}$}\put(260,20){$E$}
\put(275,25){\vector(3,1){55}}\put(295,40){$j^\mathcal{A}_a$}\put(275,20){\vector(3,-1){55}}
\put(295,-5){$j^\mathcal{B}_{T(a)}$}\put(345,33){\vector(0,-1){20}}\put(350,20){$T_{a,a}$}

\put(-10,0){$\mathcal{B}(T(b),T(c))\otimes \mathcal{B}(T(a),T(b))$}\put(170,3){\vector(1,0){25}}
\put(140,20){$M^\mathcal{B}_{T(a),T(b),T(c)}$}\put(200,0){$\mathcal{B}(T(a),T(c))$,}

\put(335,0){$\mathcal{B}(T(a),T(a))$.}

\end{picture}\vspace{10pt}

It is easy to verify that $\mathbb{C}$-categories and $\mathbb{C}$-functors constitute the category $\mathbb{C}$-$Cat$ of all $\mathbb{C}$-categories:

the composition $S\circ T$ of two $\mathbb{C}$-functors $S:\mathcal{B}\rightarrow \mathcal{C}$ and $T:\mathcal{A}\rightarrow \mathcal{B}$ is defined by $$ob(S\circ T):ob(\mathcal{A})\rightarrow ob(\mathcal{C}),\ ob(S\circ T)(a)=obS(obT(a)),$$ and $$(S\circ T)_{a,b}=S_{T(a),T(b)}\circ T_{a,b}$$ in $\mathbb{C}$, for every pair $a,b\in ob(\mathcal{A})$; the following two commutative diagrams show that $S\circ T$ as defined is a $\mathbb{C}$-functor provided $S$ and $T$ are also $\mathbb{C}$-functors,

\begin{picture}(370,80)\setlength{\unitlength}{0.26mm}

\put(40,80){$\mathcal{A}(b,c)\otimes\mathcal{A}(a,b)$}\put(160,83){\vector(1,0){160}}
\put(220,90){$M^\mathcal{A}_{a,b,c}$}\put(325,80){$\mathcal{A}(a,c)$}

\put(110,73){\vector(0,-1){20}}\put(40,60){$T_{b,c}\otimes T_{a,b}$}
\put(340,73){\vector(0,-1){20}}\put(345,60){$T_{a,c}$}

\put(-10,40){$\mathcal{B}(T(b),T(c))\otimes \mathcal{B}(T(a),T(b))$}\put(200,43){\vector(1,0){120}}
\put(220,50){$M^\mathcal{B}_{T(a),T(b),T(c)}$}\put(325,40){$\mathcal{B}(T(a),T(c))$}

\put(110,33){\vector(0,-1){20}}\put(-30,20){$S_{T(b),T(c)}\otimes S_{T(a),T(b)}$}
\put(340,33){\vector(0,-1){20}}\put(345,20){$S_{T(a),T(c)}$}

\put(-30,0){$\mathcal{C}(S\circ T(b),S\circ T(c))\otimes \mathcal{C}(S\circ T(a),S\circ T(b))$}\put(280,3){\vector(1,0){40}}
\put(200,20){$M^\mathcal{C}_{S\circ T(a),S\circ T(b),S\circ T(c)}$}\put(325,0){$\mathcal{C}(S\circ T(a),S\circ T(c))$,}

\end{picture}

\begin{picture}(240,75)

\put(3,40){$\mathcal{A}(a,a)$}\put(42,43){\vector(1,0){60}}\put(60,47){$T_{a,a}$}
\put(110,40){$\mathcal{B}(Ta,Ta)$}\put(167,43){\vector(1,0){60}}\put(172,47){$S_{T(a),T(a)}$}
\put(230,40){$C(S\circ T(a),S\circ T(a))$}

\put(127,13){\vector(-4,1){90}}\put(60,13){$j^\mathcal{A}_a$}
\put(130,13){\vector(0,1){20}}\put(135,25){$j^\mathcal{B}_{T(a)}$}\put(125,0){$E$\ ,}
\put(133,13){\vector(4,1){90}}\put(185,13){$j^\mathcal{C}_{S\circ T(a)}$}

\end{picture}\vspace{10pt}

\noindent notice that
$(S_{T(b),T(c)}\otimes S_{T(a),T(b)})\circ (T_{b,c}\otimes T_{a,b})=(S\circ T)_{b,c}\otimes (S\circ T)_{a,b}$, $S_{T(a),T(c)}\circ T_{a,c} =(S\circ T)_{a,c}$ and $S_{T(a),T(a)}\circ T_{a,a}=(S\circ T)_{a,a}$, because $\otimes$ is a bifunctor and by the definition of composition of $\mathbb{C}$-functors;

this composition of $\mathbb{C}$-functors is associative since

\noindent $(R\circ(S\circ T))_{a,b}=
R_{S\circ T(a),S\circ T(b)}\circ (S\circ T)_{a,b}=\\
R_{S\circ T(a),S\circ T(b)}\circ (S_{T(a),T(b)}\circ T_{a,b})=\\
(R_{S\circ T(a),S\circ T(b)}\circ S_{T(a),T(b)})\circ T_{a,b}=((R\circ S)\circ T)_{a,b}$;

for every $\mathcal{A}\in\mathbb{C}$-$Cat$ there is the unit $\mathbb{C}$-functor $1_\mathcal{A}:\mathcal{A}\rightarrow\mathcal{A}$, such that $ob1_\mathcal{A}:ob(\mathcal{A})\rightarrow ob(\mathcal{A})$ is the identity function $1_{ob(\mathcal{A})}$, and $$(1_\mathcal{A})_{a,b}=1_{\mathcal{A}(a,b)}$$
\noindent the unit morphism in $\mathcal{C}$, for every pair $a,b\in ob(\mathcal{A})$;

the two diagrams

\begin{picture}(350,60)\setlength{\unitlength}{0.33mm}

\put(40,40){$\mathcal{A}(b,c)\otimes\mathcal{A}(a,b)$}
\put(135,43){\vector(1,0){70}}\put(150,50){$M^\mathcal{A}_{a,b,c}$}
\put(210,40){$\mathcal{A}(a,c)$}\put(330,40){$\mathcal{A}(a,a)$}

\put(80,33){\vector(0,-1){20}}
\put(-10,20){$(1_\mathcal{A})_{b,c}\otimes (1_\mathcal{A})_{a,b}$}
\put(220,33){\vector(0,-1){20}}\put(180,20){$(1_\mathcal{A})_{a,c}$}\put(255,20){$E$}
\put(270,25){\vector(3,1){55}}\put(290,40){$j^\mathcal{A}_a$}\put(270,20){\vector(3,-1){55}}
\put(290,-5){$j^\mathcal{A}_a$}\put(350,33){\vector(0,-1){20}}\put(310,20){$(1_\mathcal{A})_{a,a}$}

\put(40,0){$\mathcal{A}(b,c)\otimes \mathcal{A}(a,b)$}
\put(135,3){\vector(1,0){70}}\put(150,10){$M^\mathcal{A}_{a,b,c}$}
\put(210,0){$\mathcal{A}(a,c)$,}

\put(330,0){$\mathcal{A}(a,a)$}

\end{picture}\vspace{10pt}

\noindent commute because $(1_\mathcal{A})_{b,c}\otimes (1_\mathcal{A})_{a,b}=1_{\mathcal{A}(b,c)}\otimes 1_{\mathcal{A}(a,b)}=1_{A(b,c)\otimes A(a,b)}$, $(1_\mathcal{A})_{a,c}=1_{\mathcal{A}(a,c)}$ and $(1_\mathcal{A})_{a,a}=1_{\mathcal{A}(a,a)}$, since $\otimes$ is a bifunctor and by definition of $1_\mathcal{A}$.\\

The following characterization of isomorphisms in $\mathbb{C}$-$Cat$ will be needed.

\begin{proposition}\label{proposition:IsosInC-Cat}
A $\mathbb{C}$-functor $T:\mathcal{A}\rightarrow\mathcal{B}$ is an isomorphism in $\mathbb{C}$-$Cat$ if and only if $obT:ob(\mathcal{A})\rightarrow ob(\mathcal{B})$ is a bijection (isomorphism in $Set$) and $T_{a,b}:\mathcal{A}(a,b)\rightarrow\mathcal{B}(T(a),T(b))$ is an isomorphism in $\mathbb{C}$, for each pair $a,b\in ob(\mathcal{A})$.
\end{proposition}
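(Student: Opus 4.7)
The plan is to prove the two implications separately, with the nontrivial direction being the construction of an inverse.

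For the forward implication, suppose $T$ has inverse $S:\mathcal{B}\rightarrow\mathcal{A}$ in $\mathbb{C}$-$Cat$, so that $S\circ T=1_\mathcal{A}$ and $T\circ S=1_\mathcal{B}$. Applying the functor $ob(-)$ gives $obS\circ obT=1_{ob(\mathcal{A})}$ and $obT\circ obS=1_{ob(\mathcal{B})}$, so $obT$ is a bijection. Reading off hom-components using the definition of composition and identity in $\mathbb{C}$-$Cat$ recalled just before the proposition, we get $S_{T(a),T(b)}\circ T_{a,b}=1_{\mathcal{A}(a,b)}$, and substituting $x=T(a)$, $y=T(b)$ (so that $S(x)=a$, $S(y)=b$) in $T_{S(x),S(y)}\circ S_{x,y}=1_{\mathcal{B}(x,y)}$ yields $T_{a,b}\circ S_{T(a),T(b)}=1_{\mathcal{B}(T(a),T(b))}$. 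Hence each $T_{a,b}$ is an isomorphism in $\mathbb{C}$.

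For the converse, assuming $obT$ is a bijection and each $T_{a,b}$ is invertible in $\mathbb{C}$, I would define the candidate inverse $S:\mathcal{B}\rightarrow\mathcal{A}$ by $obS=(obT)^{-1}$ and $S_{x,y}=\bigl(T_{S(x),S(y)}\bigr)^{-1}$. The main point is to check that $S$ is itself a $\mathbb{C}$-functor. Compatibility with composition amounts to the outer rectangle in the diagram expressing that $T$ is a $\mathbb{C}$-functor, rewritten with $a=S(x)$, $b=S(y)$, $c=S(z)$: the identity $T_{S(x),S(z)}\circ M^\mathcal{A}_{S(x),S(y),S(z)}=M^\mathcal{B}_{x,y,z}\circ (T_{S(y),S(z)}\otimes T_{S(x),S(y)})$ can be solved for $M^\mathcal{A}$ by precomposing with $(S_{y,z}\otimes S_{x,y})=(T_{S(y),S(z)}\otimes T_{S(x),S(y)})^{-1}$ (using that $\otimes$ is a bifunctor, so the tensor of inverses is the inverse of the tensor) and postcomposing with $S_{x,z}=T_{S(x),S(z)}^{-1}$. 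This yields exactly the axiom for $S$. Similarly, the unit axiom $T_{a,a}\circ j^\mathcal{A}_a=j^\mathcal{B}_{T(a)}$ at $a=S(x)$ gives, after composing with $S_{x,x}$, the identity $j^\mathcal{A}_{S(x)}=S_{x,x}\circ j^\mathcal{B}_x$.

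Finally, I would verify $S\circ T=1_\mathcal{A}$ and $T\circ S=1_\mathcal{B}$ directly. At the object level this is immediate from $obS=(obT)^{-1}$; at the hom-component level, $(S\circ T)_{a,b}=S_{T(a),T(b)}\circ T_{a,b}=T_{a,b}^{-1}\circ T_{a,b}=1_{\mathcal{A}(a,b)}=(1_\mathcal{A})_{a,b}$, and analogously for $T\circ S$. The only possible obstacle is the compatibility check for $S$, but it reduces to mechanical rearrangement once one notices, using bifunctoriality of $\otimes$, that $(T_{b,c}\otimes T_{a,b})^{-1}=T_{b,c}^{-1}\otimes T_{a,b}^{-1}$; no coherence data beyond what is already invoked in the definition of $\mathbb{C}$-$Cat$ is needed.
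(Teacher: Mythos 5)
Your proof is correct and follows essentially the same route as the paper: extract bijectivity of $obT$ and invertibility of each $T_{a,b}$ from the defining equations of an inverse, then construct $S$ via $obS=(obT)^{-1}$ and $S_{x,y}=T_{S(x),S(y)}^{-1}$ and verify $\mathbb{C}$-functoriality by rearranging the functoriality squares for $T$, using that $\otimes$ being a bifunctor makes the tensor of inverses the inverse of the tensor. If anything, you are slightly more explicit than the paper in checking both composites $S\circ T$ and $T\circ S$ at the hom-component level.
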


\begin{proof}
By definition of isomorphism in a category, $T$ is an isomorphism in $\mathbb{C}$-$Cat$ if there exists a $\mathbb{C}$-functor $S:\mathcal{B}\rightarrow\mathcal{A}$ such that $S\circ T=1_\mathcal{A}$ and $T\circ S=1_\mathcal{B}$. Hence, by definition of the unit $\mathbb{C}$-functors and of the composition of $\mathbb{C}$-functors, if $T$ is an isomorphism then $obT:ob(\mathcal{A})\rightarrow ob(\mathcal{B})$ must be a bijection and each $T_{a,b}$ must be an isomorphism in $\mathbb{C}$.

On the other hand, if $obT:ob(\mathcal{A})\rightarrow ob(\mathcal{B})$ is a bijection and each $T_{a,b}$ is an isomorphism, let's define

$obS=obT^{-1}:ob(\mathcal{B})\rightarrow ob(\mathcal{A})$, inverse of $obT$ in $Set$ and

 $S_{T(a),T(b)}=T_{a,b}^{-1}$, inverse of $T_{a,b}$ in $\mathbb{C}$.

Then, it is easy to show that $S$ is a $\mathbb{C}$-functor such that $S\circ T=1_\mathcal{A}$ and $T\circ S=1_\mathcal{B}$:

$obT^{-1}:ob(\mathcal{B})\rightarrow ob(\mathcal{A})$ is a bijection, therefore,

for all $c=T(a),\ d=T(b)\in ob(\mathcal{B})$,

$S_{c,d}=T_{a,b}^{-1}:\mathcal{B}(T(a),T(b))=\mathcal{B}(c,d)\rightarrow \mathcal{A}(a,b)=\mathcal{A}(T^{-1}(c),T^{-1}(d))$;

then the diagram

\begin{picture}(370,70)\setlength{\unitlength}{0.25mm}

\put(100,65){$\mathcal{B}(d,e)\otimes \mathcal{B}(c,d)$}\put(225,68){\vector(1,0){115}}
\put(245,75){$M^\mathcal{B}_{c,d,e}$}\put(345,65){$\mathcal{B}(c,e)$}

\put(130,58){\vector(0,-1){20}}
\put(21,45){$T^{-1}_{T^{-1}(d),T^{-1}(e)}\otimes T^{-1}_{T^{-1}(c),T^{-1}(d)}=S_{d,e}\otimes S_{c,d}$}
\put(360,58){\vector(0,-1){20}}\put(365,45){$S_{c,e}=T^{-1}_{T^{-1}(c),T^{-1}(e)}$}

\put(10,20){$\mathcal{A}(T^{-1}(d),T^{-1}(e))\otimes\mathcal{A}(T^{-1}(c),T^{-1}(d)))$}\put(305,23){\vector(1,0){35}}
\put(240,0){$M^\mathcal{A}_{T^{-1}(c),T^{-1}(d),T^{-1}(e)}$}
\put(345,20){$\mathcal{A}(T^{-1}(c),T^{-1}(e))$}

\end{picture}\vspace{10pt}

is commutative, for every $b,c,d,e\in ob(\mathcal{B})$, because

$T_{T^{-1}(c),T^{-1}(e)}\circ M^\mathcal{A}_{T^{-1}(c),T^{-1}(d),T^{-1}(e)}=M^\mathcal{B}_{c,d,e}\circ (T_{T^{-1}(d),T^{-1}(e)}\otimes T_{T^{-1}(c),T^{-1}(d)})$, since $T$ is a $C$-functor,

$\Leftrightarrow M^\mathcal{A}_{T^{-1}(c),T^{-1}(d),T^{-1}(e)}\circ (T^{-1}_{T^{-1}(d),T^{-1}(e)}\otimes T^{-1}_{T^{-1}(c),T^{-1}(d)})=\\
T^{-1}_{T^{-1}(c),T^{-1}(e)}\circ M^\mathcal{B}_{c,d,e}$,

\noindent notice that $(T_{T^{-1}(d),T^{-1}(e)}\otimes T_{T^{-1}(c),T^{-1}(d)})^{-1}=T^{-1}_{T^{-1}(d),T^{-1}(e)}\otimes T^{-1}_{T^{-1}(c),T^{-1}(d)}$ simply because $\otimes$ is a bifunctor,

$\Leftrightarrow M^\mathcal{A}_{T^{-1}(c),T^{-1}(d),T^{-1}(e)}\circ (S_{d,e}\otimes S_{c,d})=S_{c,e}\circ M^\mathcal{B}_{c,d,e}$;

$S_{T(a),T(a)}\circ j^\mathcal{B}_{T(a)}=j^\mathcal{A}_a\Leftrightarrow j^\mathcal{A}_a=T^{-1}_{a,a}\circ j^\mathcal{B}_{T(a)}\Leftrightarrow T_{a,a}\circ j^\mathcal{A}_a=j^\mathcal{B}_{T(a)}$, for every $a\in ob(\mathcal{A})$. \end{proof}

In exactly the same way $\mathbb{X}$-categories and $\mathbb{X}$-functors constitute the category $\mathbb{X}$-$Cat$ of all $\mathbb{X}$-categories.

\section{The derived monoidal adjunction}\label{sec:derived}

We will now derive a new monoidal adjunction, from the base monoidal adjunction presented in section \ref{sec-base monoidal adjunction}; its monoidal structure will be given in section \ref{section:derivedMonoidal}.\\

A new adjunction is going to be defined,

\begin{center}
$(\mathbb{F},\mathbb{G},\Theta,\Upsilon):\mathbb{C}$-$Cat\rightarrow\mathbb{X}$-$Cat$,
\end{center}

from the category of all $\mathbb{C}$-categories into the category of all $\mathbb{X}$-categories.\\

 $\mathbb{F}(\mathcal{A})$ is the object of $\mathbb{X}$-$Cat$ such that $ob\mathbb{F}(\mathcal{A})=ob(\mathcal{A})$, that is, the objects of $\mathbb{F}(\mathcal{A})$ are exactly the objects of $\mathcal{A}$, for every object $\mathcal{A}\in\mathbb{C}$-$Cat$;

the hom-object $\mathbb{F}(\mathcal{A})(a,b)=F(\mathcal{A}(a,b))\in \mathbb{X}$, for every pair $a,b\in ob(\mathcal{A})$;

the composition law $M^{\mathbb{F}(\mathcal{A})}_{a,b,c}=FM^\mathcal{A}_{a,b,c}$ in $\mathbb{X}$, for every triple $a,b,c\in ob(\mathcal{A})$;

the identity element $j^{\mathbb{F}(\mathcal{A})}_a=Fj^\mathcal{A}_a$, for every $a\in ob(\mathcal{A})$.\\

$\mathbb{F}T:\mathbb{F}(\mathcal{A})\rightarrow \mathbb{F}(\mathcal{B})$ is an $\mathbb{X}$-functor from the $\mathbb{X}$-category $\mathbb{F}(\mathcal{A})$ to the $\mathbb{X}$-category $\mathbb{F}(\mathcal{B})$, for every $\mathbb{C}$-functor $T:\mathcal{A}\rightarrow \mathcal{B}$:

the function $ob\mathbb{F}T:ob\mathbb{F}(\mathcal{A})\rightarrow ob\mathbb{F}(\mathcal{B})$ is $obT:ob(\mathcal{A})\rightarrow ob(\mathcal{B})$, that is, $ob\mathbb{F}T=obT$;

the map $(\mathbb{F} T)_{a,b}:\mathbb{F}(\mathcal{A})(a,b)\rightarrow\mathbb{F}(\mathcal{B})(T(a),T(b))$ is defined by $$(\mathbb{F}T)_{a,b}=FT_{a,b},$$ for each $a,b\in ob(\mathcal{A})$.\\

(1) First, let's check that $\mathbb{F}$ as defined is really a functor from $\mathbb{C}$-$Cat$ into $\mathbb{X}$-$Cat$.

(1.1) $\mathbb{F}(\mathcal{A})\in\mathbb{X}$-$Cat$, for any $\mathcal{A}\in\mathbb{C}$-$Cat$: consider the commutative diagrams which express associativity and unit axioms for $\mathcal{A}$ (cf.\ section \ref{sec-Vcats}); their $F$-image shows that $\mathbb{F}(\mathcal{A})$ as defined is an $\mathbb{X}$-category, since the functor $F$ is a monoidal morphism.

(1.2) For every $T:\mathcal{A}\rightarrow\mathcal{B}$ in $\mathbb{C}$-$Cat$, $\mathbb{F}T$ as defined above is an $\mathbb{X}$-functor: consider the commutative diagrams which express the compatibility with composition and with the identities of the $\mathbb{C}$-functor $T$ (cf.\ section \ref{sec-Vcats}); their $F$-image shows that $\mathbb{F}T$ as defined is an $\mathbb{X}$-functor, since the functor $F$ is a monoidal morphism.

(1.3) $\mathbb{F}$ is a functor if $\mathbb{F}1_\mathcal{A}=1_{\mathbb{F}(\mathcal{A})}$ and $\mathbb{F}(T'\circ T)=\mathbb{F}T'\circ\mathbb{F}T$, for every $\mathcal{A}\in\mathbb{C}$-$Cat$ and every pair $T,T'$ of composable $\mathbb{C}$-functors.

The unit $\mathcal{V}$-functor was characterized in general in section \ref{sec-Vcats} just before Proposition \ref{proposition:IsosInC-Cat}, so that $ob1_{\mathbb{F}(\mathcal{A})}=1_{ob\mathbb{F}(\mathcal{A})}$, the identity function, and $(1_{\mathbb{F}(\mathcal{A})})_{a,b}=1_{\mathbb{F}(\mathcal{A})(a,b)}$, the unit morphism in $\mathbb{X}$, for every pair $a,b\in ob\mathbb{F}(\mathcal{A})$. Hence, $\mathbb{F}1_\mathcal{A}=1_{\mathbb{F}(\mathcal{A})}$, the unit $\mathbb{X}$-functor, because $ob\mathbb{F}1_\mathcal{A}=ob1_\mathcal{A}=1_{ob(\mathcal{A})}=1_{ob\mathbb{F}(\mathcal{A})}$ and $(\mathbb{F}1_\mathcal{A})_{a,b}=F(1_\mathcal{A})_{a,b}=F1_{\mathcal{A}(a,b)}=1_{F(\mathcal{A}(a,b))}
=1_{\mathbb{F}(\mathcal{A})(a,b)}$.

$\mathbb{F}(T'\circ T)=\mathbb{F}T'\circ\mathbb{F}T$, for every pair $T,T'$ of composable $\mathbb{C}$-functors, because\footnote{Notice that ``$\circ$" in this paragraph means either the composition of morphisms in $\mathbb{C}$-$Cat$, $\mathbb{X}$-$Cat$, $\mathbb{C}$ or $\mathbb{X}$, either the composition of maps in $Set$. We leave to the reader the easy understanding of the meaning of each ``$\circ$" in what follows.} $ob\mathbb{F}(T'\circ T)=ob(T'\circ T)=obT'\circ obT=ob\mathbb{F}T'\circ ob\mathbb{F}T$ and $(\mathbb{F}(T'\circ T))_{a,b}=F(T'\circ T)_{a,b}=F(T'_{T(a),T(b)}\circ T_{a,b})=FT'_{T(a),T(b)}\circ FT_{a,b}=(\mathbb{F}T')_{T(a),T(b)}\circ (\mathbb{F}T)_{a,b}=(\mathbb{F}T'\circ \mathbb{F}T)_{a,b}$.\\

(2) The definition of $\mathbb{G}:\mathbb{X}$-$Cat\rightarrow \mathbb{C}$-$Cat$ is completely analogous to that of $\mathbb{F}$. Therefore, the proof that $\mathbb{G}$ is really a functor from $\mathbb{X}$-$Cat$ into $\mathbb{C}$-$Cat$ is also completely analogous to the one done for $\mathbb{F}$ just above in (1).\\

(3) Let $\mathcal{A}$ be a $\mathbb{C}$-category, then the unit morphism $\Theta_\mathcal{A}:\mathcal{A}\rightarrow \mathbb{GF}(\mathcal{A})$ is the one such that $ob\Theta_\mathcal{A}$ is the identity function $$1_{ob(\mathcal{A})}:ob(\mathcal{A})\rightarrow ob\mathbb{GF}(\mathcal{A})=ob\mathbb{F}(\mathcal{A})=ob(\mathcal{A}),$$ and, for every $a,b\in ob(\mathcal{A})$, $$(\Theta_\mathcal{A})_{a,b}=\eta_{\mathcal{A}(a,b)}:\mathcal{A}(a,b)\rightarrow GF(\mathcal{A}(a,b))$$ is the unit morphism for $\mathcal{A}(a,b)\in\mathbb{C}$ in the base adjunction $F\dashv G$.\\

\noindent $\Theta_\mathcal{A}$ is a $\mathbb{C}$-functor (morphism in $\mathbb{C}$-$Cat$):

\noindent (3.1) it is compatible with composition, since

\noindent $(\Theta_\mathcal{A})_{a,c}\circ M^\mathcal{A}_{a,b,c}=\eta_{\mathcal{A}(a,c)}\circ M^\mathcal{A}_{a,b,c}$, by definition of $\Theta$

\noindent $= GF(M^\mathcal{A}_{a,b,c})\circ\eta_{\mathcal{A}(b,c)\otimes \mathcal{A}(a,b)}$, because $\eta:1_\mathbb{C}\rightarrow GF$ is natural

\noindent $=M^{\mathbb{GF}(\mathcal{A})}_{a,b,c}\circ\eta_{\mathcal{A}(b,c)\otimes \mathcal{A}(a,b)}$, by definition of $\mathbb{F}$ and $\mathbb{G}$

\noindent $=M^{\mathbb{GF}(\mathcal{A})}_{a,b,c}\circ (\eta_{\mathcal{A}(b,c)}\otimes\eta_{\mathcal{A}(a,b)})$, by assumption $(C)$ (cf.\ section \ref{sec-base monoidal adjunction})

\noindent $=M^{\mathbb{GF}(\mathcal{A})}_{a,b,c}\circ ((\Theta_\mathcal{A})_{b,c}\otimes (\Theta_\mathcal{A})_{a,b})$, by definition of $\Theta$;

\noindent (3.2) it is compatible with the identities, since

$(\Theta_\mathcal{A})_{a,a}\circ j^\mathcal{A}_a=\eta_{\mathcal{A}(a,a)}\circ j^\mathcal{A}_a$, by definition of $\Theta$

$=GF(j^\mathcal{A}_a)\circ\eta_E$, because $\eta:1_\mathbb{C}\rightarrow GF$ is natural

$=j^{\mathbb{GF}(\mathcal{A})}_a\circ\eta_E$, by definition of $\mathbb{F}$ and $\mathbb{G}$

$=j^{\mathbb{GF}(\mathcal{A})}_a\circ 1_E$, by assumption $(D)$ (cf.\ section \ref{sec-base monoidal adjunction}).\\

(4) Let $\mathcal{X}\in\mathbb{X}$-category, then the counit morphism $\Upsilon_\mathcal{X}:\mathbb{FG}(\mathcal{X})\rightarrow\mathcal{X}$ is the one such that $ob\Upsilon_\mathcal{X}$ is the identity function $$1_{ob(\mathcal{X})}:ob\mathbb{FG}(\mathcal{X})=ob\mathbb{G}(\mathcal{X})= ob(\mathcal{X})\rightarrow ob(\mathcal{X}),$$ and, for every $x,y\in ob(\mathcal{X})$, $$(\Upsilon_\mathcal{X})_{x,y}=\varepsilon_{\mathcal{X}(x,y)}:FG(\mathcal{X}(x,y))\rightarrow\mathcal{X}(x,y)$$ is the counit morphism for $\mathcal{X}(x,y)\in\mathbb{X}$ in the base adjunction $F\dashv G$.\\

The proof that $\Upsilon_\mathcal{X}$ is an $\mathbb{X}$-functor (morphism in $\mathbb{X}$-$Cat$) is completely analogous to the one for $\Theta_\mathcal{A}$ given just above in (3).\\

(5) It will now be shown that $\Theta_\mathcal{A}:\mathcal{A}\rightarrow \mathbb{GF}(\mathcal{A})$ is a universal arrow from $\mathcal{A}$ to $\mathbb{G}$, for every $\mathcal{A}\in \mathbb{C}$-$Cat$:\footnote{Cf.\ last footnote.}

\noindent (5.1) consider any $\mathbb{C}$-functor $T:\mathcal{A}\rightarrow\mathbb{G}(\mathcal{X})$; if there is an $\mathbb{X}$-functor $U:\mathbb{F}(\mathcal{A})\rightarrow \mathcal{X}$ such that $\mathbb{G}U\circ\Theta_\mathcal{A}=T$, then it must be unique, since $obT=ob(\mathbb{G}U\circ\Theta_\mathcal{A})$

$=ob\mathbb{G}U\circ ob\Theta_\mathcal{A}$, by definition of composition in $\mathbb{C}$-$Cat$

$=obU\circ 1_{ob(\mathcal{A})}$, by the definitions of $\mathbb{G}$ and $\Theta$ (cf.\ (3))

$=obU$,

\noindent and $T_{a,b}=GU_{a,b}\circ\eta_{\mathcal{A}(a,b)}$ implies that $U_{a,b}$ is unique, for $\eta_{\mathcal{A}(a,b)}$ is a universal arrow from $\mathcal{A}(a,b)$ to $G$;

\noindent (5.2) let's check finally that $U$ given in (5.1) is an $\mathbb{X}$-functor (a morphism in $\mathbb{X}$-$Cat$);

\noindent (5.2.1) $U$ is compatible with composition:

$G(U_{a,c}\circ FM^\mathcal{A}_{a,b,c})\circ\eta_{\mathcal{A}(b,c)\otimes\mathcal{A}(a,b)}=GU_{a,c}\circ (\eta_{\mathcal{A}(a,c)}\circ M^\mathcal{A}_{a,b,c})$, because $\eta:1_\mathbb{C}\rightarrow GF$ is natural

$=T_{a,c}\circ M^\mathcal{A}_{a,b,c}$, because $T=\mathbb{G}U\circ\Theta_\mathcal{A}$

$=GM^\mathcal{X}_{T(a),T(b),T(c)}\circ (T_{b,c}\otimes T_{a,b})$, since $T$ is a $\mathbb{C}$-functor

$=GM^\mathcal{X}_{T(a),T(b),T(c)}\circ ((GU_{b,c}\circ\eta_{\mathcal{A}(b,c)})\otimes (GU_{a,b}\circ\eta_{\mathcal{A}(a,b)}))$, because $T=\mathbb{G}U\circ\Theta_\mathcal{A}$

$=G(M^\mathcal{X}_{T(a),T(b),T(c)}\circ (U_{b,c}\lozenge U_{a,b}))\circ (\eta_{\mathcal{A}(b,c)}\otimes\eta_{\mathcal{A}(a,b)})$, because $\otimes$ is a bifunctor

$=G(M^\mathcal{X}_{T(a),T(b),T(c)}\circ (U_{b,c}\lozenge U_{a,b}))\circ \eta_{\mathcal{A}(b,c)\otimes\mathcal{A}(a,b)}$, by assumption (C) (cf.\ section \ref{sec-base monoidal adjunction})

$\Rightarrow U_{a,c}\circ FM^\mathcal{A}_{a,b,c}=M^\mathcal{X}_{T(a),T(b),T(c)}\circ (U_{b,c}\lozenge U_{a,b})$, since $\eta_{\mathcal{A}(b,c)\otimes\mathcal{A}(a,b)}$ is universal from $\mathcal{A}(b,c)\otimes\mathcal{A}(a,b)$ to $G$.

\noindent (5.2.2) $U$ is compatible with the identities:

$Gj^\mathcal{X}_{T(a)}\circ\eta_E=T_{a,a}\circ j^\mathcal{A}_a\circ\eta_E$, because $T$ is a $\mathbb{C}$-functor and $GF(E)=G(I)=E$

$=GU_{a,a}\circ (\eta_{\mathcal{A}(a,a)}\circ j^\mathcal{A}_a)\circ\eta_E$

$=GU_{a,a}\circ (Gj^{\mathbb{F}(\mathcal{A})}_a\circ\eta_E)\circ\eta_E$, because $\eta:1_\mathbb{C}\rightarrow GF$ is natural

$=GU_{a,a}\circ Gj^{\mathbb{F}(\mathcal{A})}_a\circ\eta_E$, because $\eta_E=1_E$ by assumption (D) (cf.\ section \ref{sec-base monoidal adjunction})

$\Rightarrow j^\mathcal{X}_{T(a)}=U_{a,a}\circ j^{\mathbb{F}(\mathcal{A})}_a$, since $\eta_E$ is a universal arrow from $E$ to $G$.\\

(6) It will now be shown that $\Upsilon_\mathcal{X}:\mathbb{FG}(\mathcal{X})\rightarrow \mathcal{X} $ is a universal arrow from $\mathbb{F}$ to $\mathcal{X}$, for every $\mathcal{X}\in \mathbb{X}$-$Cat$:

\noindent (6.1) consider any $\mathcal{X}$-functor $T:\mathbb{F}(\mathcal{A})\rightarrow\mathcal{X}$; if there is a $\mathbb{C}$-functor $V:\mathcal{A}\rightarrow\mathbb{G}(\mathcal{X})$ such that $\Upsilon_\mathcal{X}\circ\mathbb{F}V=T$, it must be unique, since

$obT=ob(\Upsilon_\mathcal{X}\circ\mathbb{F}V)=ob\Upsilon_\mathcal{X}\circ ob\mathbb{F}V$, by definition of composition in $\mathbb{X}$-$Cat$

$=obV$, by the definitions of $\mathbb{F}$ and $\Upsilon$ (cf.\ (4)),

\noindent and $T_{a,b}=\varepsilon_{\mathcal{X}(T(a),T(b))}\circ FV_{a,b}$ implies that $V_{a,b}$ is unique, for $\varepsilon_{\mathcal{X}(T(a),T(b))}$ is a universal arrow from $F$ to $\mathcal{X}(T(a),T(b))$;

\noindent (6.2) let's check finally that $V$ given in (6.1) is an $\mathbb{X}$-functor (a morphism in $\mathbb{X}$-$Cat$);

\noindent (6.2.1) $V$ is compatible with composition:

$\varepsilon_{\mathcal{X}(T(a),T(c))}\circ F(V_{a,c}\circ M^\mathcal{A}_{a,b,c})=T_{a,c}\circ FM^\mathcal{A}_{a,b,c}$, because $T=\Upsilon_{\mathcal{X}}\circ \mathbb{F}V$

$=M^\mathcal{X}_{T(a),T(b),T(c)}\circ (T_{b,c}\lozenge T_{a,b})$, because $T$ is an $\mathbb{X}$-functor

$=M^\mathcal{X}_{T(a),T(b),T(c)}\circ ((\varepsilon_{\mathcal{X}(T(b),T(c))}\circ FV_{b,c})\lozenge (\varepsilon_{\mathcal{X}(T(a),T(b))}\circ FV_{a,b}))$, because $T=\Upsilon_\mathcal{X}\circ\mathbb{F}V$

$=M^\mathcal{X}_{T(a),T(b),T(c)}\circ ((\varepsilon_{\mathcal{X}(T(b),T(c))}\lozenge \varepsilon_{\mathcal{X}(T(a),T(b))})\circ (FV_{b,c}\lozenge FV_{a,b}))$, because $\lozenge$ is a bifunctor

$=M^\mathcal{X}_{T(a),T(b),T(c)}\circ (\varepsilon_{{\mathcal{X}(T(b),T(c))}\lozenge \mathcal{X}(T(a),T(b))}\circ F(V_{b,c}\otimes V_{a,b}))$, by assumption (C) and Proposition \ref{proposition:duality eta epsilon}

$=\varepsilon_{\mathcal{X}(T(a),T(c))}\circ FG(M^\mathcal{X}_{T(a),T(b),T(c)})\circ F(V_{b,c}\otimes V_{a,b})$, because $\varepsilon :FG\rightarrow 1_\mathbb{X}$ is natural

$=\varepsilon_{\mathcal{X}(T(a),T(c))}\circ F(GM^\mathcal{X}_{T(a),T(b),T(c)}\circ (V_{b,c}\otimes V_{a,b}))$

$\Rightarrow V_{a,c}\circ M^\mathcal{A}_{a,b,c}=GM^\mathcal{X}_{T(a),T(b),T(c)}\circ (V_{b,c}\otimes V_{a,b})$, since $\varepsilon_{\mathcal{X}(T(a),T(c))}$ is universal from $F$ to $\mathcal{X}(T(a),T(c))$.

\noindent (6.2.2) $V$ is compatible with the identities:

$\varepsilon_{\mathcal{X}(T(a),T(a))}\circ FGj^\mathcal{X}_{T(a)}=j^\mathcal{X}_{T(a)}\circ\varepsilon_{FG(I)}$, since $\varepsilon :FG\rightarrow 1_\mathbb{X}$ is natural

$=j^\mathcal{X}_{T(a)}\circ\varepsilon_I$, because $FG(I)=F(E)=I$

$=j^\mathcal{X}_{T(a)}$, by assumption (D) and Proposition \ref{proposition:duality eta epsilon}

$=T_{a,a}\circ j^{\mathbb{F}(\mathcal{A})}_a=T_{a,a}\circ Fj^\mathcal{A}_a$, because $T$ is a $\mathbb{C}$-functor

$=\varepsilon_{\mathcal{X}(T(a),T(a))}\circ (FV_{a,a}\circ Fj^\mathcal{A}_a)$

$\Rightarrow Gj^\mathcal{X}_{T(a)}(=j^{\mathbb{G}(\mathcal{X})}_{T(a)})=V_{a,a}\circ j^\mathcal{A}_a$, since $\varepsilon_{\mathcal{X}(T(a),T(a))}$ is a universal arrow from $F$ to $\mathcal{X}(T(a),T(a))$.\\

(7) It is easy to check that $\mathbb{G}\Upsilon\cdot\Theta\mathbb{G}=\mathbb{G}$ and $\Upsilon\mathbb{F}\cdot\mathbb{F}\Theta=\mathbb{F}$, using the assumptions $G\varepsilon\cdot\eta G=G$ and $\varepsilon F\cdot F\eta=F$.

\section{A monoidal structure for the derived adjunction}\label{section:derivedMonoidal}

In this section, the symmetric monoidal category

\begin{center}
$\mathbb{C}$-$Cat=(\mathbb{C}$-$Cat,\bigcirc,\mathfrak{E},\wedge,\Gamma,\mho)$,
\end{center}

will be presented, that is, a monoidal structure for the category of all $\mathbb{C}$-categories, giving the definition of all its items, and proving along the way that it obeys the axioms (cf.\ the first four sections of chapter 1 in \cite{Kelly:enriched_cat}).

\subsection{A bifunctor for $\mathbb{C}$-$Cat$}\label{subsection:A bifunctor}

Consider the (bi)functor

\begin{center}
$\bigcirc :\mathbb{C}$-$Cat\times\mathbb{C}$-$Cat\rightarrow\mathbb{C}$-$Cat$,
\end{center}

such that $T\bigcirc S:\mathcal{A}\bigcirc \mathcal{B}\rightarrow \mathcal{A}'\bigcirc \mathcal{B}'$ is the $\mathbb{C}$-functor image of $(T,S):(\mathcal{A},\mathcal{B})\rightarrow (\mathcal{A}',\mathcal{B}')$.\\

(I) Definition of the $\mathbb{C}$-category $\mathcal{A}\bigcirc \mathcal{B}$, for any pair $\mathcal{A},\mathcal{B}\in \mathbb{C}$-$Cat$:\\

$ob(\mathcal{A}\bigcirc\mathcal{B})=ob(\mathcal{A})\times ob(\mathcal{B})$, the set of objects of a tensor product $\mathcal{A}\bigcirc\mathcal{B}$ is the cartesian product of the sets of objects of the two $\mathbb{C}$-categories;\\

$\mathcal{A}\bigcirc\mathcal{B}((a,b),(\bar{a},\bar{b}))=\mathcal{A}(a,\bar{a})\otimes\mathcal{B}(b,\bar{b})$, a hom-object of the new tensor product is the old tensor product of the corresponding hom-objects for the two $\mathbb{C}$-categories;\\

the composition law $$M^{\mathcal{A}\bigcirc \mathcal{B}}_{(a,b),(\bar{a},\bar{b}),(\bar{\bar{a}},\bar{\bar{b}})}:
\mathcal{A}\bigcirc\mathcal{B}((\bar{a},\bar{b}),(\bar{\bar{a}},\bar{\bar{b}}))\otimes
\mathcal{A}\bigcirc\mathcal{B}((a,b),(\bar{a},\bar{b}))\rightarrow
\mathcal{A}\bigcirc\mathcal{B}((a,b),(\bar{\bar{a}},\bar{\bar{b}}))$$ is defined as\\

\noindent $M^{\mathcal{A}\bigcirc \mathcal{B}}_{(a,b),(\bar{a},\bar{b}),(\bar{\bar{a}},\bar{\bar{b}})}=
(M^\mathcal{A}_{a,\bar{a},\bar{\bar{a}}}\otimes M^\mathcal{B}_{b,\bar{b},\bar{\bar{b}}})\circ m_{\mathcal{A}(\bar{a},\bar{\bar{a}}),\mathcal{B}(\bar{b},\bar{\bar{b}}),\mathcal{A}(a,\bar{a}),\mathcal{B}(b,\bar{b})}:\\
(\mathcal{A}(\bar{a},\bar{\bar{a}})\otimes\mathcal{B}(\bar{b},\bar{\bar{b}}))\otimes
(\mathcal{A}(a,\bar{a})\otimes\mathcal{B}(b,\bar{b}))\rightarrow
(\mathcal{A}(\bar{a},\bar{\bar{a}})\otimes\mathcal{A}(a,\bar{a}))\otimes\\
\otimes (\mathcal{B}(\bar{b},\bar{\bar{b}})\otimes\mathcal{B}(b,\bar{b}))\rightarrow
\mathcal{A}(a,\bar{\bar{a}})\otimes\mathcal{B}(b,\bar{\bar{b}})$,\\

where, for instance, $m$ is the natural isomorphism whose components are\footnote{Remark that the results outside this subsection \ref{subsection:A bifunctor} do not need the symmetry in the monoidal structure; cf.\ \cite[I.1.4]{Kelly:enriched_cat}.}

$m_{A,B,C,D}=\alpha^{-1}_{A,C,B\otimes D}\circ (1_A\otimes\alpha_{C,B,D})\circ (1_A\otimes (\gamma_{B,C}\otimes 1_D))\circ \\
\circ (1_A\otimes\alpha^{-1}_{B,C,D})\circ\alpha_{A,B,C\otimes D}$,

for any quadruple $A,B,C,D\in\mathbb{C}$;\\

the unit law $$j^{\mathcal{A}\bigcirc \mathcal{B}}_{(a,b)}:E\rightarrow\mathcal{A}\bigcirc \mathcal{B}((a,b),(a,b))$$ is defined as $j^{\mathcal{A}\bigcirc \mathcal{B}}_{(a,b)}=(j^\mathcal{A}_a\otimes j^\mathcal{B}_b)\circ\rho_E:E\rightarrow E\otimes E\rightarrow\mathcal{A}(a,a)\otimes\mathcal{B}(b,b)$.\\

(II) Definition of the $\mathbb{C}$-functor $T\bigcirc S:\mathcal{A}\bigcirc \mathcal{B}\rightarrow \mathcal{A}'\bigcirc \mathcal{B}'$, for any pair $T:\mathcal{A}\rightarrow \mathcal{A}'$, $S:\mathcal{B}\rightarrow \mathcal{B}'$ in $\mathbb{C}$-$Cat$:\\

$T\bigcirc S$ consists of a function $obT\bigcirc S=obT\times obS$, which is the cartesian product of the two object functions for the two $\mathbb{C}$-functors, together with the map $$T\bigcirc S_{(a,b),(\bar{a},\bar{b})}=T_{a,\bar{a}}\otimes S_{b,\bar{b}}:\mathcal{A}(a,\bar{a})\otimes\mathcal{B}(b,\bar{b})\rightarrow\mathcal{A}'(T(a),T(\bar{a}))\otimes\mathcal{B}'(S(b),S(\bar{b})),$$ for each pair $(a,b),(\bar{a},\bar{b})\in ob(\mathcal{A}\bigcirc\mathcal{B})$.\\

We have to check that:

\noindent (1) $\mathcal{A}\bigcirc\mathcal{B}$ as defined is in fact a $\mathbb{C}$-category, that is,

(1.1) the associativity axiom, and

(1.2) the unit axioms hold; and then that,

\noindent (2) $T\bigcirc S:\mathcal{A}\bigcirc \mathcal{B}\rightarrow\mathcal{A}'\bigcirc \mathcal{B}'$ as defined is in fact a morphism in $\mathbb{C}$-$Cat$ (a $\mathbb{C}$-functor), that is,

(2.1) there is compatibility with composition, and

(2.2) with the identities; finally, that

\noindent (3) $\bigcirc$ is a (bi)functor, that is,

(3.1) $\bigcirc$ preserves the identities, and

(3.2) $\bigcirc$ preserves the composition.\\

\noindent (1) $\mathcal{A}\bigcirc \mathcal{B}\in\mathbb{C}$-$Cat$:\\

(1.1) Is the following equation true?\\

\noindent $
M^{\mathcal{A}\bigcirc \mathcal{B}}_{(a,b),(\bar{\bar{a}},\bar{\bar{b}}),(\bar{\bar{\bar{a}}},\bar{\bar{\bar{b}}})}\circ (1_{\mathcal{A}\bigcirc \mathcal{B}((\bar{\bar{a}},\bar{\bar{b}}),(\bar{\bar{\bar{a}}},\bar{\bar{\bar{b}}}))}\otimes M^{\mathcal{A}\bigcirc \mathcal{B}}_{(a,b),(\bar{a},\bar{b}),(\bar{\bar{a}},\bar{\bar{b}})})\circ\alpha_{\mathcal{A}\bigcirc \mathcal{B}((\bar{\bar{a}},\bar{\bar{b}}),(\bar{\bar{\bar{a}}},\bar{\bar{\bar{b}}})),\mathcal{A}\bigcirc \mathcal{B}((\bar{a},\bar{b}),(\bar{\bar{a}},\bar{\bar{b}})),\mathcal{A}\bigcirc \mathcal{B}((a,b),(\bar{a},\bar{b}))}=\\
=M^{\mathcal{A}\bigcirc \mathcal{B}}_{(a,b),(\bar{a},\bar{b}),(\bar{\bar{\bar{a}}},\bar{\bar{\bar{b}}})}\circ (M^{\mathcal{A}\bigcirc \mathcal{B}}_{(\bar{a},\bar{b}),(\bar{\bar{a}},\bar{\bar{b}}),(\bar{\bar{\bar{a}}},\bar{\bar{\bar{b}}})}\otimes 1_{\mathcal{A}\bigcirc \mathcal{B}((a,b),(\bar{a},\bar{b}))})
$\\

which is equivalent to the equation\\

\noindent \textbf{(i)} $
(M^\mathcal{A}_{a,\bar{\bar{a}},\bar{\bar{\bar{a}}}}\otimes M^\mathcal{B}_{b,\bar{\bar{b}},\bar{\bar{\bar{b}}}})
\circ m_{\mathcal{A}(\bar{\bar{a}},\bar{\bar{\bar{a}}}),\mathcal{B}(\bar{\bar{b}},\bar{\bar{\bar{b}}}),\mathcal{A}(a,\bar{\bar{a}}),\mathcal{B}(b,\bar{\bar{b}})}
\circ (1_{\mathcal{A}(\bar{\bar{a}},\bar{\bar{\bar{a}}})\otimes\mathcal{B}(\bar{\bar{b}},\bar{\bar{\bar{b}}})}\otimes (M^\mathcal{A}_{a,\bar{a},\bar{\bar{a}}}\otimes M^\mathcal{B}_{b,\bar{b},\bar{\bar{b}}}))\circ
(1_{\mathcal{A}(\bar{\bar{a}},\bar{\bar{\bar{a}}})\otimes\mathcal{B}(\bar{\bar{b}},\bar{\bar{\bar{b}}})}
\otimes m_{\mathcal{A}(\bar{a},\bar{\bar{a}}),\mathcal{B}(\bar{b},\bar{\bar{b}}),\mathcal{A}(a,\bar{a}),\mathcal{B}(b,\bar{b})})
\circ
\alpha_{\mathcal{A}(\bar{\bar{a}},\bar{\bar{\bar{a}}})\otimes\mathcal{B}(\bar{\bar{b}},
\bar{\bar{\bar{b}}}),\mathcal{A}(\bar{a},\bar{\bar{a}})\otimes\mathcal{B}(\bar{b},\bar{\bar{b}}),
\mathcal{A}(a,\bar{a})\otimes\mathcal{B}(b,\bar{b})}=\\
=(M^\mathcal{A}_{a,\bar{a},\bar{\bar{\bar{a}}}}\otimes M^\mathcal{B}_{b,\bar{b},\bar{\bar{\bar{b}}}})
\circ m_{\mathcal{A}(\bar{a},\bar{\bar{\bar{a}}}),\mathcal{B}(\bar{b},\bar{\bar{\bar{b}}}),\mathcal{A}(a,\bar{a}),\mathcal{B}(b,\bar{b})}
\circ ((M^\mathcal{A}_{\bar{a},\bar{\bar{a}},\bar{\bar{\bar{a}}}}\otimes M^\mathcal{B}_{\bar{b},\bar{\bar{b}},\bar{\bar{\bar{b}}}})\otimes 1_{\mathcal{A}(a,\bar{a})\otimes\mathcal{B}(b,\bar{b})})\circ
(m_{\mathcal{A}(\bar{\bar{a}},\bar{\bar{\bar{a}}}),\mathcal{B}(\bar{\bar{b}},\bar{\bar{\bar{b}}}),\mathcal{A}(\bar{a},\bar{\bar{a}}),\mathcal{B}(\bar{b},\bar{\bar{b}})}
\otimes1_{\mathcal{A}(a,\bar{a})\otimes\mathcal{B}(b,\bar{b})})$;\\

remark that, as $\mathcal{A}$ and $\mathcal{B}$ are $\mathbb{C}$-categories, the associativity axioms for each are the following two equations, for every $a,\bar{a},\bar{\bar{a}},\bar{\bar{\bar{a}}}\in\mathcal{A}$ and $b,\bar{b},\bar{\bar{b}},\bar{\bar{\bar{b}}}\in\mathcal{B}$,
$$M^\mathcal{A}_{a,\bar{\bar{a}},\bar{\bar{\bar{a}}}}\circ (1_{\mathcal{A}(\bar{\bar{a}},\bar{\bar{\bar{a}}})}\otimes M^\mathcal{A}_{a,\bar{a},\bar{\bar{a}}})\circ
\alpha_{\mathcal{A}(\bar{\bar{a}},\bar{\bar{\bar{a}}}),\mathcal{A}(\bar{a},\bar{\bar{a}}),\mathcal{A}(a,\bar{a})}=
M^\mathcal{A}_{a,\bar{a},\bar{\bar{\bar{a}}}}\circ (M^\mathcal{A}_{\bar{a},\bar{\bar{a}},\bar{\bar{\bar{a}}}}\otimes 1_{\mathcal{A}(a,\bar{a})})$$
and
$$M^\mathcal{B}_{b,\bar{\bar{b}},\bar{\bar{\bar{b}}}}\circ (1_{\mathcal{B}(\bar{\bar{b}},\bar{\bar{\bar{b}}})}\otimes M^\mathcal{B}_{b,\bar{b},\bar{\bar{b}}})\circ
\alpha_{\mathcal{B}(\bar{\bar{b}},\bar{\bar{\bar{b}}}),\mathcal{B}(\bar{b},\bar{\bar{b}}),\mathcal{B}(b,\bar{b})}=
M^\mathcal{B}_{b,\bar{b},\bar{\bar{\bar{b}}}}\circ (M^\mathcal{B}_{\bar{b},\bar{\bar{b}},\bar{\bar{\bar{b}}}}\otimes 1_{\mathcal{B}(b,\bar{b})}),$$ which imply, by tensoring,\\

\noindent \textbf{(ii)} $(M^\mathcal{A}_{a,\bar{\bar{a}},\bar{\bar{\bar{a}}}}\otimes M^\mathcal{B}_{b,\bar{\bar{b}},\bar{\bar{\bar{b}}}})\circ ((1_{\mathcal{A}(\bar{\bar{a}},\bar{\bar{\bar{a}}})}\otimes M^\mathcal{A}_{a,\bar{a},\bar{\bar{a}}})\otimes(1_{\mathcal{B}(\bar{\bar{b}},\bar{\bar{\bar{b}}})}\otimes M^\mathcal{B}_{b,\bar{b},\bar{\bar{b}}}))\circ
(\alpha_{\mathcal{A}(\bar{\bar{a}},\bar{\bar{\bar{a}}}),\mathcal{A}(\bar{a},\bar{\bar{a}}),\mathcal{A}(a,\bar{a})}
\otimes\alpha_{\mathcal{B}(\bar{\bar{b}},\bar{\bar{\bar{b}}}),\mathcal{B}(\bar{b},\bar{\bar{b}}),\mathcal{B}(b,\bar{b})})=
(M^\mathcal{A}_{a,\bar{a},\bar{\bar{\bar{a}}}}\otimes M^\mathcal{B}_{b,\bar{b},\bar{\bar{\bar{b}}}})\circ ((M^\mathcal{A}_{\bar{a},\bar{\bar{a}},\bar{\bar{\bar{a}}}}\otimes 1_{\mathcal{A}(a,\bar{a})})\otimes (M^\mathcal{B}_{\bar{b},\bar{\bar{b}},\bar{\bar{\bar{b}}}}\otimes 1_{\mathcal{B}(b,\bar{b})}))$;\\

the diagram corresponding to \textbf{(ii)}, which is a pentagon in which the top edge is $\alpha_{\mathcal{A}(\bar{\bar{a}},\bar{\bar{\bar{a}}}),\mathcal{A}(\bar{a},\bar{\bar{a}}),\mathcal{A}(a,\bar{a})}
\otimes\alpha_{\mathcal{B}(\bar{\bar{b}},\bar{\bar{\bar{b}}}),\mathcal{B}(\bar{b},\bar{\bar{b}}),\mathcal{B}(b,\bar{b})}$, can be extended upwards by the commutative rectangle (whose down edge coincides with the top edge $\alpha_{\mathcal{A}(\bar{\bar{a}},\bar{\bar{\bar{a}}}),\mathcal{A}(\bar{a},\bar{\bar{a}}),\mathcal{A}(a,\bar{a})}
\otimes\alpha_{\mathcal{B}(\bar{\bar{b}},\bar{\bar{\bar{b}}}),\mathcal{B}(\bar{b},\bar{\bar{b}}),\mathcal{B}(b,\bar{b})}$ of \textbf{(ii)}) corresponding to the equation\\

\noindent \textbf{(iii)} $(\alpha_{\mathcal{A}(\bar{\bar{a}},\bar{\bar{\bar{a}}}),\mathcal{A}(\bar{a},\bar{\bar{a}}),\mathcal{A}(a,\bar{a})}
\otimes\alpha_{\mathcal{B}(\bar{\bar{b}},\bar{\bar{\bar{b}}}),\mathcal{B}(\bar{b},\bar{\bar{b}}),\mathcal{B}(b,\bar{b})})
\circ m_{\mathcal{A}(\bar{\bar{a}},\bar{\bar{\bar{a}}})\otimes\mathcal{A}(\bar{a},\bar{\bar{a}}),
\mathcal{B}(\bar{\bar{b}},\bar{\bar{\bar{b}}})\otimes\mathcal{B}(\bar{b},\bar{\bar{b}}),\mathcal{A}(a,\bar{a}),\mathcal{B}(b,\bar{b})}
\circ (m_{\mathcal{A}(\bar{\bar{a}},\bar{\bar{\bar{a}}}),\mathcal{B}(\bar{\bar{b}},\bar{\bar{\bar{b}}}),
\mathcal{A}(\bar{a},\bar{\bar{a}}),\mathcal{B}(\bar{b},\bar{\bar{b}}))}
\otimes 1_{\mathcal{A}(a,\bar{a})\otimes\mathcal{B}(b,\bar{b})})=m_{\mathcal{A}(\bar{\bar{a}},\bar{\bar{\bar{a}}}),
\mathcal{B}(\bar{\bar{b}},\bar{\bar{\bar{b}}}),\mathcal{A}(\bar{a},\bar{\bar{a}})\otimes\mathcal{A}(a,\bar{a}),
\mathcal{B}(\bar{b},\bar{\bar{b}})\otimes\mathcal{B}(b,\bar{b})}\circ (1_{\mathcal{A}(\bar{\bar{a}},\bar{\bar{\bar{a}}})\otimes\mathcal{B}(\bar{\bar{b}},\bar{\bar{\bar{b}}})}
\otimes m_{\mathcal{A}(\bar{a},\bar{\bar{a}}),
\mathcal{B}(\bar{b},\bar{\bar{b}}),\mathcal{A}(a,\bar{a}),\mathcal{B}(b,\bar{b})})\circ \alpha_{\mathcal{A}(\bar{\bar{a}},\bar{\bar{\bar{a}}})\otimes
\mathcal{B}(\bar{\bar{b}},\bar{\bar{\bar{b}}}),\mathcal{A}(\bar{a},\bar{\bar{a}})\otimes\mathcal{B}(\bar{b},\bar{\bar{b}}),
\mathcal{A}(a,\bar{a})\otimes\mathcal{B}(b,\bar{b})} $;\\

the equation \textbf{(iii)} holds, since every diagram of natural transformations commutes provided each arrow of which is obtained by repeatedly applying the functor $\otimes$ to instances of $\alpha$, $\gamma$, $\rho$, their inverses and $1$ (Cf.\ \cite[\S VII]{SM:cat} for a precise formulation);\\

the two equations\\

$m_{\mathcal{A}(\bar{\bar{a}},\bar{\bar{\bar{a}}})\otimes\mathcal{A}(\bar{a},\bar{\bar{a}}),\mathcal{A}(a,\bar{a}),
\mathcal{B}(\bar{\bar{b}},\bar{\bar{\bar{b}}})\otimes\mathcal{B}(\bar{b},\bar{\bar{b}}),\mathcal{B}(b,\bar{b})}\circ m_{\mathcal{A}(\bar{\bar{a}},\bar{\bar{\bar{a}}})\otimes\mathcal{A}(\bar{a},\bar{\bar{a}}),
\mathcal{B}(\bar{\bar{b}},\bar{\bar{\bar{b}}})\otimes\mathcal{B}(\bar{b},\bar{\bar{b}}),\mathcal{A}(a,\bar{a}),\mathcal{B}(b,\bar{b})}\circ (m_{\mathcal{A}(\bar{\bar{a}},\bar{\bar{\bar{a}}}),\mathcal{B}(\bar{\bar{b}},\bar{\bar{\bar{b}}}),
\mathcal{A}(\bar{a},\bar{\bar{\bar{a}}}),\mathcal{B}(\bar{b},\bar{\bar{b}})}\otimes 1_{\mathcal{A}(a,\bar{a})\otimes\mathcal{B}(b,\bar{b})})=
m_{\mathcal{A}(\bar{\bar{a}},\bar{\bar{\bar{a}}}),\mathcal{B}(\bar{\bar{b}},
\bar{\bar{\bar{b}}}),\mathcal{A}(\bar{a},\bar{\bar{a}}),
\mathcal{B}(\bar{b},\bar{\bar{b}})}\otimes 1_{\mathcal{A}(a,\bar{a})\otimes\mathcal{B}(b,\bar{b})}$\\

and\\

$m^{-1}_{\mathcal{A}(\bar{a},\bar{\bar{\bar{a}}}),\mathcal{A}(a,\bar{a}),\mathcal{B}(\bar{b},\bar{\bar{\bar{b}}}),
\mathcal{B}(b,\bar{b})}\circ ((M^\mathcal{A}_{\bar{a},\bar{\bar{a}},\bar{\bar{\bar{a}}}}\otimes M^\mathcal{B}_{\bar{b},\bar{\bar{b}},\bar{\bar{\bar{b}}}})\otimes(1_{\mathcal{A}(a,\bar{a})}\otimes 1_{\mathcal{B}(b,\bar{b})}))\\
\circ m_{\mathcal{A}(\bar{\bar{a}},\bar{\bar{\bar{a}}})\otimes\mathcal{A}(\bar{a},\bar{\bar{a}}),\mathcal{A}(a,\bar{a}),
\mathcal{B}(\bar{\bar{b}},\bar{\bar{\bar{b}}})\otimes\mathcal{B}(\bar{b},\bar{\bar{b}}),\mathcal{B}(b,\bar{b})}
=(M^\mathcal{A}_{\bar{a},\bar{\bar{a}},\bar{\bar{\bar{a}}}}\otimes 1_{\mathcal{A}(a,\bar{a})})\otimes(M^\mathcal{B}_{\bar{b},\bar{\bar{b}},\bar{\bar{\bar{b}}}}\otimes 1_{\mathcal{B}(b,\bar{b})}) $\\

both hold (the first one for the same reason that equation \textbf{(iii)} holds; the second one holds since $m$ is natural); hence, the upwards extended pentagon (corresponding to equations \textbf{(ii)} and \textbf{(iii)}) can now be extended to the left; a similar process can be applied to obtain a right extension, proving finally that the composition law \textbf{(i)} holds for $\mathcal{A}\bigcirc \mathcal{B}$.\\

(1.2) Let's check now the unit axioms: are the following two equations true? (Cf.\ section \ref{sec-Vcats})\\

(1.2.1) $M^{\mathcal{A}\bigcirc\mathcal{B}}_{(a,b),(a,b),(\bar{a},\bar{b})}\circ (1_{\mathcal{A}\bigcirc\mathcal{B}((a,b),(\bar{a},\bar{b}))}\otimes j^{\mathcal{A}\bigcirc\mathcal{B}}_{(a,b)})=\rho_{\mathcal{A}\bigcirc\mathcal{B}((a,b),(\bar{a},\bar{b}))}$,\\

(1.2.2) $M^{\mathcal{A}\bigcirc\mathcal{B}}_{(a,b),(\bar{a},\bar{b}),(\bar{a},\bar{b})}
\circ (j^{\mathcal{A}\bigcirc\mathcal{B}}_{(\bar{a},\bar{b})}\otimes 1_{\mathcal{A}\bigcirc\mathcal{B}((a,b),(\bar{a},\bar{b}))})=
\rho_{\mathcal{A}\bigcirc\mathcal{B}((a,b),(\bar{a},\bar{b}))}\circ
\gamma_{E,\mathcal{A}\bigcirc\mathcal{B}((a,b),(\bar{a},\bar{b}))}$;\\

(1.2.1) as $\mathcal{A}$ and $\mathcal{B}$ are $\mathbb{C}$-categories, then the two equalities $M^\mathcal{A}_{a,a,\bar{a}}\circ (1_{\mathcal{A}(a,\bar{a})}\otimes j^\mathcal{A}_a)=\rho_{\mathcal{A}(a,\bar{a})}$ and $M^\mathcal{B}_{b,b,\bar{b}}\circ (1_{\mathcal{B}(b,\bar{b})}\otimes j^\mathcal{B}_b)=\rho_{\mathcal{B}(b,\bar{b})}$ hold, which imply by tensoring,\\

\textbf{(iv)} $(M^\mathcal{A}_{a,a,\bar{a}}\otimes M^\mathcal{B}_{b,b,\bar{b}})\circ ((1_{\mathcal{A}(a,\bar{a})}\otimes j^\mathcal{A}_a)\otimes (1_{\mathcal{B}(b,\bar{b})}\otimes j^\mathcal{B}_b))=\rho_{\mathcal{A}(a,\bar{a})}\otimes\rho_{\mathcal{B}(b,\bar{b})}$;\\

the triangular diagram corresponding to equation \textbf{(iv)} can be extended to the left with the square corresponding to the equation\\

\textbf{(v)} $((1_{\mathcal{A}(a,\bar{a})}\otimes j^\mathcal{A}_a)\otimes (1_{\mathcal{B}(b,\bar{b})}\otimes j^\mathcal{B}_b))\circ m_{\mathcal{A}(a,\bar{a}),\mathcal{B}(b,\bar{b}),E,E}=m_{\mathcal{A}(a,\bar{a}),\mathcal{B}(b,\bar{b}),\mathcal{A}(a,a),\mathcal{B}(b,b)}
\circ ((1_{\mathcal{A}(a,\bar{a})}\otimes 1_{\mathcal{B}(b,\bar{b})})\otimes (j^\mathcal{A}_a\otimes j^\mathcal{B}_b))$,\\

which holds since $m$ is natural;\\

the equation\\

\textbf{(vi)} $(\rho_{\mathcal{A}(a,\bar{a})}\otimes\rho_{\mathcal{B}(b,\bar{b})})\circ m_{\mathcal{A}(a,\bar{a}),\mathcal{B}(b,\bar{b}),E,E}
\circ(1_{\mathcal{A}(a,\bar{a})\otimes\mathcal{B}(b,\bar{b})}\otimes\rho_E))=
\rho_{\mathcal{A}(a,\bar{a})\otimes\mathcal{B}(b,\bar{b})}$\\

holds, since every diagram of natural transformations commutes, provided each arrow of which is obtained by repeatedly applying the functor $\otimes$ to instances of $\alpha$, $\gamma$, $\rho$, their inverses and $1$ (Cf.\ \cite[\S VII]{SM:cat} for a precise formulation);\\

the diagram corresponding to \textbf{(vi)} extends downwards the diagram corresponding to the extension of \textbf{(iv)} by \textbf{(v)}, giving the equation\\

$(M^\mathcal{A}_{a,a,\bar{a}}\otimes M^\mathcal{B}_{b,b,\bar{b}})\circ m_{\mathcal{A}(a,\bar{a}),\mathcal{B}(b,\bar{b}),\mathcal{A}(a,a),\mathcal{B}(b,b)}
\circ(1_{\mathcal{A}(a,\bar{a})\otimes\mathcal{B}(b,\bar{b})}\otimes (j^\mathcal{A}_a\otimes j^\mathcal{B}_b))
\circ(1_{\mathcal{A}(a,\bar{a})\otimes\mathcal{B}(b,\bar{b})}\otimes\rho_E))=
\rho_{\mathcal{A}(a,\bar{a})\otimes\mathcal{B}(b,\bar{b})}$\\

 which is exactly equation (1.2.1);\\

equation (1.2.2) also holds, whose proof can be obtained by mimicking the proof of (1.2.1) just above.\\

\noindent (2) Is $T\bigcirc S:\mathcal{A}\bigcirc \mathcal{B}\rightarrow\mathcal{A}'\bigcirc \mathcal{B}'$ a $\mathbb{C}$-functor?\\

(2.1) $T\bigcirc S_{(a,b),(\bar{\bar{a}},\bar{\bar{b}})}\circ M^{\mathcal{A}\bigcirc \mathcal{B}}_{(a,b),(\bar{a},\bar{b}),(\bar{\bar{a}},\bar{\bar{b}})}=
M^{\mathcal{A}'\bigcirc\mathcal{B}'}_{(T(a),S(b)),(T(\bar{a}),S(\bar{b})),(T(\bar{\bar{a}}),S(\bar{\bar{b}}))}\circ (T\bigcirc S_{(\bar{a},\bar{b}),(\bar{\bar{a}},\bar{\bar{b}})}\otimes T\bigcirc S_{(a,b),(\bar{a},\bar{b})})$, for every $a,\bar{a},\bar{\bar{a}}\in\mathcal{A}$ and $b,\bar{b},\bar{\bar{b}}\in\mathcal{B}$:\\

as $T$ and $S$ are $\mathbb{C}$-functors, the following two equations hold,\\

$$T_{a,\bar{\bar{a}}}\circ M^\mathcal{A}_{a,\bar{a},\bar{\bar{a}}}=
M^{\mathcal{A}'}_{T(a),T(\bar{a}),T(\bar{\bar{a}})}\circ (T_{\bar{a},\bar{\bar{a}}}\otimes T_{a,\bar{a}})$$ and
$$S_{b,\bar{\bar{b}}}\circ M^\mathcal{B}_{b,\bar{b},\bar{\bar{b}}}=
M^{\mathcal{B}'}_{S(b),S(\bar{b}),S(\bar{\bar{b}})}\circ (S_{\bar{b},\bar{\bar{b}}}\otimes S_{b,\bar{b}});$$

then, by tensoring,\\

$(T_{a,\bar{\bar{a}}}\otimes S_{b,\bar{\bar{b}}})\circ (M^\mathcal{A}_{a,\bar{a},\bar{\bar{a}}}\otimes M^\mathcal{B}_{b,\bar{b},\bar{\bar{b}}})=
(M^{\mathcal{A}'}_{T(a),T(\bar{a}),T(\bar{\bar{a}})}\otimes M^{\mathcal{B}'}_{S(b),S(\bar{b}),S(\bar{\bar{b}})})
\circ ((T_{\bar{a},\bar{\bar{a}}}\otimes T_{a,\bar{a}})\otimes (S_{\bar{b},\bar{\bar{b}}}\otimes S_{b,\bar{b}}))$,\\

which, together with the following equation,\\

$((T_{\bar{a},\bar{\bar{a}}}\otimes T_{a,\bar{a}})\otimes (S_{\bar{b},\bar{\bar{b}}}\otimes S_{b,\bar{b}}))\circ m_{\mathcal{A}(\bar{a},\bar{\bar{a}}),\mathcal{B}(\bar{b},\bar{\bar{b}}),\mathcal{A}(a,\bar{a}),\mathcal{B}(b,\bar{b})}
=\\
m_{\mathcal{A}'(T(\bar{a}),T(\bar{\bar{a}})),\mathcal{B}'(S(\bar{b}),S(\bar{\bar{b}})),\mathcal{A}'(T(a),T(\bar{a})),\mathcal{B}'(S(b),S(\bar{b}))}
\circ ((T_{\bar{a},\bar{\bar{a}}}\otimes S_{\bar{b},\bar{\bar{b}}})\otimes (T_{a,\bar{a}}\otimes S_{b,\bar{b}}))$\\

arising from the naturality of $m$, gives the equation\\

$(T_{a,\bar{\bar{a}}}\otimes S_{b,\bar{\bar{b}}})\circ (M^\mathcal{A}_{a,\bar{a},\bar{\bar{a}}}\otimes M^\mathcal{B}_{b,\bar{b},\bar{\bar{b}}})\circ m_{\mathcal{A}(\bar{a},\bar{\bar{a}}),\mathcal{B}(\bar{b},\bar{\bar{b}}),\mathcal{A}(a,\bar{a}),\mathcal{B}(b,\bar{b})}
=(M^{\mathcal{A}'}_{T(a),T(\bar{a}),T(\bar{\bar{a}})}\otimes M^{\mathcal{B}'}_{S(b),S(\bar{b}),S(\bar{\bar{b}})} )\circ m_{\mathcal{A}'(T(\bar{a}),T(\bar{\bar{a}})),\mathcal{B}'(S(\bar{b}),S(\bar{\bar{b}})),\mathcal{A}'(T(a),T(\bar{a})),\mathcal{B}'(S(b),S(\bar{b}))}\circ ((T_{\bar{a},\bar{\bar{a}}}\otimes S_{\bar{b},\bar{\bar{b}}})\otimes (T_{a,\bar{a}}\otimes S_{b,\bar{b}}))$\\

which is exactly the compatibility with composition (2.1), according to the definitions given above.\\

(2.2) $j^{\mathcal{A}'\bigcirc \mathcal{B}'}_{(T(a),S(b))}=T\bigcirc S_{(a,b),(a,b)}\circ j^{\mathcal{A}\bigcirc \mathcal{B}}_{(a,b)}$, for every $a\in\mathcal{A}$ and $b\in\mathcal{B}$:\\

as $\mathcal{A}$ and $\mathcal{B}$ are $\mathbb{C}$-categories, the following two equations hold,

$$j^{\mathcal{A}'}_{T(a)}=T_{a,a}\circ j^{\mathcal{A}}_a,$$ $$j^{\mathcal{B}'}_{S(b)}=S_{b,b}\circ j^{\mathcal{B}}_b,$$ and so $$j^{\mathcal{A}'}_{T(a)}\otimes j^{\mathcal{B}'}_{S(b)}=(T_{a,a}\otimes S_{b,b})\circ (j^{\mathcal{A}}_a\otimes j^{\mathcal{B}}_b);$$

composing with the isomorphism $\rho_E:E\rightarrow E\otimes E$, we get $$(j^{\mathcal{A}'}_{T(a)}\otimes j^{\mathcal{B}'}_{S(b)})\circ\rho_E=(T_{a,a}\otimes S_{b,b})\circ (j^{\mathcal{A}}_a\otimes j^{\mathcal{B}}_b)\circ\rho_E$$ which is the compatibility with identities (2.2), according to the definitions given above.\\

\noindent (3) Is $\bigcirc$ a (bi)functor?\\

(3.1) ($\bigcirc 1_{(\mathcal{A},\mathcal{B})}=\bigcirc (1_\mathcal{A},1_\mathcal{B})=$)$1_\mathcal{A}\bigcirc 1_\mathcal{B}=1_{\mathcal{A}\bigcirc\mathcal{B}}$, the image of any identity is an identity (cf.\ the characterization of identity morphisms in $\mathbb{C}$-$Cat$, given in section \ref{sec-base monoidal adjunction} just before Proposition \ref{proposition:IsosInC-Cat}):\\

$ob1_\mathcal{A}\bigcirc 1_\mathcal{B}=ob1_\mathcal{A}\times ob1_\mathcal{B}=1_{ob(\mathcal{A})}\times 1_{ob(\mathcal{B})}$, by definiton of $1_\mathcal{A}$ and $1_\mathcal{B}$

$=1_{ob(\mathcal{A})\times ob(\mathcal{B})}$, since $\times$ is a bifunctor for $Set$

$=1_{ob(\mathcal{A}\bigcirc\mathcal{B})}=ob 1_{\mathcal{A}\bigcirc\mathcal{B}}:ob(\mathcal{A})\times ob(\mathcal{B})\rightarrow ob(\mathcal{A})\times ob(\mathcal{B})$;\\

$(1_\mathcal{A}\bigcirc 1_\mathcal{B})_{(a,b),(\bar{a},\bar{b})}=(1_\mathcal{A})_{(a,\bar{a})}\otimes (1_\mathcal{B})_{(b,\bar{b})}=1_{\mathcal{A}(a,\bar{a})}\otimes 1_{\mathcal{B}(b,\bar{b})}$, by definition of $1_\mathcal{A}$ and $1_\mathcal{B}$

$=1_{\mathcal{A}(a,\bar{a})\otimes\mathcal{B}(b,\bar{b})}:\mathcal{A}(a,\bar{a})\otimes\mathcal{B}(b,\bar{b})\rightarrow\mathcal{A}(a,\bar{a})\otimes\mathcal{B}(b,\bar{b})$, because $\otimes$ is a bifunctor for $\mathbb{C}$.\\

(3.2) $(T'\bigcirc S')\circ(T\bigcirc S)=(T'\circ T)\bigcirc(S'\circ S)$:\footnote{Cf.\ the footnotes in section \ref{sec:derived}.}\\

$ob((T'\bigcirc S')\circ(T\bigcirc S))=ob(T'\bigcirc S')\circ ob(T\bigcirc S)$

$=(obT'\times obS')\circ (obT\times obS)=(obT'\circ obT)\times (obS'\circ obS)$, because $\times$ is a bifunctor for $Set$

$=ob(T'\circ T)\times ob(S'\circ S)=ob((T'\circ T)\bigcirc (S'\circ S))$;\\

$((T'\bigcirc S')\circ(T\bigcirc S))_{(a,b),(\bar{a},\bar{b})}=$

$=(T'\bigcirc S')_{(T(a),S(b)),(T(\bar{a}),S(\bar{b}))}\circ (T\bigcirc S)_{(a,b),(\bar{a},\bar{b})}$

$=(T'_{T(a),T(\bar{a})}\otimes S'_{S(b),S(\bar{b})})\circ (T_{a,\bar{a}}\otimes S_{b,\bar{b}})$

$=(T'_{T(a),T(\bar{a})}\circ T_{a,\bar{a}})\otimes (S'_{S(b),S(\bar{b})}\circ S_{b,\bar{b}})$, because $\otimes$ is a bifunctor for $\mathbb{C}$

$=(T'\circ T)_{a,\bar{a}}\otimes (S'\circ S)_{b,\bar{b}}=((T'\circ T)\bigcirc (S'\circ S))_{(a,b),(\bar{a},\bar{b})}$.\\

\noindent Conclusion: now, at the end of subsection \ref{subsection:A bifunctor}, we can state that $\bigcirc$ is indeed a (bi)functor.

\subsection{A natural isomorphism $\wedge$ for $\mathbb{C}$-$Cat$}\label{subsection:isowedge C-Cat}

We define $$\wedge_{\mathcal{A},\mathcal{B},\mathcal{C}}:(\mathcal{A}\bigcirc\mathcal{B})\bigcirc\mathcal{C}
\rightarrow\mathcal{A}\bigcirc(\mathcal{B}\bigcirc\mathcal{C})$$ as
$$ob\wedge_{\mathcal{A},\mathcal{B},\mathcal{C}}:(ob(\mathcal{A})\times ob(\mathcal{B}))\times ob(\mathcal{C})
\rightarrow ob(\mathcal{A})\times (ob(\mathcal{B})\times ob(\mathcal{C}))$$ the canonical isomorphism in $Set$, and\\

\noindent $(\wedge_{\mathcal{A},\mathcal{B},\mathcal{C}})_{((a,b),c),((\bar{a},\bar{b}),\bar{c})}
=\alpha_{\mathcal{A}(a,\bar{a}),\mathcal{B}(b,\bar{b}),\mathcal{C}(c,\bar{c})}:
(\mathcal{A}(a,\bar{a})\otimes\mathcal{B}(b,\bar{b}))\otimes\mathcal{C}(c,\bar{c})
\rightarrow\mathcal{A}(a,\bar{a})\otimes(\mathcal{B}(b,\bar{b})\otimes\mathcal{C}(c,\bar{c}))$,\\

\noindent for every $\mathcal{A},\mathcal{B},\mathcal{C}\in\mathbb{C}$-$Cat$ and every three pairs $(a,\bar{a})\in ob(\mathcal{A})\times ob(\mathcal{A})$, $(b,\bar{b})\in ob(\mathcal{B})\times ob(\mathcal{B})$ and $(c,\bar{c})\in ob(\mathcal{C})\times ob(\mathcal{C})$.\\

Is $\wedge$ natural? That is, for every triple of $\mathbb{C}$-functors $T:\mathcal{A}\rightarrow\mathcal{A}'$, $S:\mathcal{B}\rightarrow\mathcal{B}'$, $R:\mathcal{C}\rightarrow\mathcal{C}'$, does the diagram corresponding to the following equation commute? $$(T\bigcirc (S\bigcirc R))\circ\wedge_{\mathcal{A},\mathcal{B},\mathcal{C}}=\wedge_{\mathcal{A}',\mathcal{B}',\mathcal{C}'}\circ ((T\bigcirc S)\bigcirc R)).$$

The image of this equation by the functor $ob:\mathbb{C}$-$Cat\rightarrow Set$ is obviously true, since

\begin{picture}(200,60)
\put(20,35){$((a,b),c)$}
\put(70,37){\vector (1,0){45}}\put(120,35){$(a,(b,c))$}
\put(170,37){\vector (1,0){45}}\put(220,35){$(T(a),(S(b),R(c)))$}

\put(50,30){\vector (3,-1){50}}\put(170,13){\vector (3,1){50}}

\put(100,0){$((T(a),S(b)),R(c))$\hspace{20pt} .}
\end{picture}\\

As $\alpha$ is natural by assumption, the following equation holds,\\

$(T_{a,\bar{a}}\otimes (S_{b,\bar{b}}\otimes R_{c,\bar{c}}))\circ\alpha_{\mathcal{A}(a,\bar{a}),\mathcal{B}(b,\bar{b}),\mathcal{C}(c,\bar{c})}=
\alpha_{\mathcal{A}'(T(a),T(\bar{a})),\mathcal{B}'(S(b),S(\bar{b})),\mathcal{C}'(R(c),R(\bar{c}))}
\circ ((T_{a,\bar{a}}\otimes S_{b,\bar{b}})\otimes R_{c,\bar{c}})$,\\

for every three pairs $(a,\bar{a})\in ob(\mathcal{A})\times ob(\mathcal{A})$, $(b,\bar{b})\in ob(\mathcal{B})\times ob(\mathcal{B})$ and $(c,\bar{c})\in ob(\mathcal{C})\times ob(\mathcal{C})$; hence, $\wedge$ is natural.\\

The pentagon coherence axiom corresponds to the equation\\

$\wedge_{\mathcal{A},\mathcal{B},\mathcal{C}\bigcirc\mathcal{D}}\circ
\wedge_{\mathcal{A}\bigcirc\mathcal{B},\mathcal{C},\mathcal{D}}=
(1_\mathcal{A}\bigcirc\wedge_{\mathcal{B},\mathcal{C},\mathcal{D}})\circ
\wedge_{\mathcal{A},\mathcal{B}\bigcirc\mathcal{C},\mathcal{D}}\circ (\wedge_{\mathcal{A},\mathcal{B},\mathcal{C}}\bigcirc 1_\mathcal{D})$,\\

whose image by the functor $ob:\mathbb{C}$-$Cat\rightarrow Set$ is obviously true, since

\begin{picture}(200,40)
\put(0,15){$(((a,b),c),d)$}
\put(70,17){\vector (1,0){45}}\put(120,15){$((a,b),(c,d))$}
\put(190,17){\vector (1,0){45}}\put(240,15){$(a,(b,(c,d))$}

\put(35,10){\vector (0,-1){20}}\put(275,-10){\vector (0,1){20}}

\put(0,-20){$((a,(b,c)),d)$}\put(70,-18){\vector (1,0){165}}\put(240,-20){$(a,((b,c),d))$;}
\end{picture}\\\\

and, as $\alpha$ satisfies the coherence axioms in $\mathbb{C}$, it follows that the following equation holds,\\

$\alpha_{\mathcal{A}(a,\bar{a}),\mathcal{B}(b,\bar{b}),\mathcal{C}(c,\bar{c})\otimes\mathcal{D}(d,\bar{d})}
\circ\alpha_{\mathcal{A}(a,\bar{a})\otimes\mathcal{B}(b,\bar{b}),\mathcal{C}(c,\bar{c}),\mathcal{D}(d,\bar{d})}
=(1_{\mathcal{A}(a,\bar{a})}\otimes\alpha_{\mathcal{B}(b,\bar{b}),\mathcal{C}(c,\bar{c}),\mathcal{D}(d,\bar{d})})
\circ\alpha_{\mathcal{A}(a,\bar{a}),\mathcal{B}(b,\bar{b})\otimes\mathcal{C}(c,\bar{c}),\mathcal{D}(d,\bar{d})}
\circ (\alpha_{\mathcal{A}(a,\bar{a}),\mathcal{B}(b,\bar{b}),\mathcal{C}(c,\bar{c})}\otimes 1_{\mathcal{D}(d,\bar{d})})$,\\

for every four pairs $(a,\bar{a})\in ob(\mathcal{A})\times ob(\mathcal{A})$, $(b,\bar{b})\in ob(\mathcal{B})\times ob(\mathcal{B})$, $(c,\bar{c})\in ob(\mathcal{C})\times ob(\mathcal{C})$ and $(d,\bar{d})\in ob(\mathcal{D})\times ob(\mathcal{D})$; therefore, the pentagon coherence axiom holds for $\wedge$.

\subsection{A symmetry $\Gamma$ for $\mathbb{C}$-$Cat$}\label{subsection:isoGamma C-Cat}

We define $$\Gamma_{\mathcal{A},\mathcal{B}}:\mathcal{A}\bigcirc\mathcal{B}\rightarrow\mathcal{B}\bigcirc\mathcal{A}$$
as $$ob\Gamma_{\mathcal{A},\mathcal{B}}:ob(\mathcal{A})\times ob(\mathcal{B})\rightarrow ob(\mathcal{B})\times ob(\mathcal{A}),$$ the canonical isomorphism in $Set$, and $$(\Gamma_{\mathcal{A},\mathcal{B}})_{(a,b),(\bar{a},\bar{b})}=\gamma_{\mathcal{A}(a,\bar{a}),\mathcal{B}(b,\bar{b})}:
\mathcal{A}(a,\bar{a})\otimes\mathcal{B}(b,\bar{b})\rightarrow\mathcal{B}(b,\bar{b})\otimes\mathcal{A}(a,\bar{a}),$$
for every pair $\mathcal{A},\mathcal{B}$ of $\mathbb{C}$-categories and every $a,\bar{a}\in ob(\mathcal{A})$ and every $b,\bar{b}\in ob(\mathcal{B})$.\\

Is $\Gamma$ natural? That is, for every pair of $\mathbb{C}$-functors $T:\mathcal{A}\rightarrow\mathcal{A}'$, $S:\mathcal{B}\rightarrow\mathcal{B}'$, does the diagram corresponding to the following equation commute?
$$(S\bigcirc T)\circ\Gamma_{\mathcal{A},\mathcal{B}}=
\Gamma_{\mathcal{A}',\mathcal{B}'}\circ (T\bigcirc S).$$

The image of this equation by the functor $ob:\mathbb{C}$-$Cat\rightarrow Set$ is obviously true, since

\begin{picture}(200,40)
\put(20,15){$(a,b)$}
\put(55,17){\vector (1,0){60}}\put(120,15){$(b,a)$}
\put(155,17){\vector (1,0){60}}\put(220,15){$(S(b),T(a))$}

\put(50,10){\vector (3,-1){50}}\put(170,-7){\vector (3,1){50}}

\put(105,-20){$(T(a),S(b))$\hspace{20pt} .}
\end{picture}\\\\

As $\gamma$ is natural by assumption, the following equation holds,\\

$(S_{b,\bar{b}}\otimes T_{a,\bar{a}})\circ\gamma_{\mathcal{A}(a,\bar{a}),\mathcal{B}(b,\bar{b})}=
\gamma_{\mathcal{A}'(T(a),T(\bar{a})),\mathcal{B}'(S(b),S(\bar{b}))}
\circ (T_{a,\bar{a}}\otimes S_{b,\bar{b}})$,\\

for every two pairs $(a,\bar{a})\in ob(\mathcal{A})\times ob(\mathcal{A})$, $(b,\bar{b})\in ob(\mathcal{B})\times ob(\mathcal{B})$; hence, $\Gamma$ is natural.\\

We will check now the coherence axioms corresponding to the two equations
$$(1)\ \Gamma_{\mathcal{B},\mathcal{A}}\circ\Gamma_{\mathcal{A},\mathcal{B}}=1_{\mathcal{A}\bigcirc\mathcal{B}},$$ and
$$(2) \wedge_{\mathcal{B},\mathcal{C},\mathcal{A}}\circ\Gamma_{\mathcal{A},\mathcal{B}\bigcirc\mathcal{C}}
\circ\wedge_{\mathcal{A},\mathcal{B},\mathcal{C}}=(1_\mathcal{B}\bigcirc\Gamma_{\mathcal{A},\mathcal{C}})
\circ\wedge_{\mathcal{B},\mathcal{A},\mathcal{C}}\circ (\Gamma_{\mathcal{A},\mathcal{B}}\bigcirc 1_\mathcal{C}),$$
whose images by the functor $ob:\mathbb{C}$-$Cat\rightarrow Set$ are obviously true, since
$$(1)\ (a,b)\mapsto (b,a)\mapsto (a,b),$$ and

\begin{picture}(270,40)
\put(20,15){$(2)\ ((a,b),c)$}
\put(90,17){\vector (1,0){15}}\put(110,15){$(a,(b,c))$}\put(160,17){\vector (1,0){15}}\put(180,15){$((b,c),a)$}
\put(230,17){\vector (1,0){15}}\put(250,15){$(b,(c,a))$}

\put(60,10){\vector (2,-1){30}}\put(80,-20){$((b,a),c)$}
\put(130,-15){\vector (1,0){40}}\put(175,-20){$(b,(a,c))$\hspace{20pt} ;}\put(220,-7){\vector (2,1){30}}

\end{picture}\\\\

\noindent as $\gamma$ satisfies the coherence axioms in $\mathbb{C}$, the following two equations hold,\\

$(1)\ \gamma_{\mathcal{B}(b,\bar{b}),\mathcal{A}(a,\bar{a})}\circ\gamma_{\mathcal{A}(a,\bar{a}),\mathcal{B}(b,\bar{b})}
=1_{\mathcal{A}(a,\bar{a})\otimes\mathcal{B}(b,\bar{b})}$, and\\

$(2)\ \alpha_{\mathcal{B}(b,\bar{b}),\mathcal{C}(c,\bar{c}),\mathcal{A}(a,\bar{a})}\circ
\gamma_{\mathcal{A}(a,\bar{a}),\mathcal{B}(b,\bar{b})\otimes\mathcal{C}(c,\bar{c})}
\circ\alpha_{\mathcal{A}(a,\bar{a}),\mathcal{B}(b,\bar{b}),\mathcal{C}(c,\bar{c})}
=\\
(1_{\mathcal{B}(b,\bar{b})}\otimes\gamma_{\mathcal{A}(a,\bar{a}),\mathcal{C}(c,\bar{c})})
\circ\alpha_{\mathcal{B}(b,\bar{b}),\mathcal{A}(a,\bar{a}),\mathcal{C}(c,\bar{c})}
\circ (\gamma_{\mathcal{A}(a,\bar{a}),\mathcal{B}(b,\bar{b})}\otimes 1_{\mathcal{C}(c,\bar{c})})$,\\

\noindent for every pairs $(a,\bar{a})\in ob(\mathcal{A})\times ob(\mathcal{A})$, $(b,\bar{b})\in ob(\mathcal{B})\times ob(\mathcal{B})$ and $(c,\bar{c})\in ob(\mathcal{C})\times ob(\mathcal{C})$; therefore, the two coherence axioms $(1)$ and $(2)$ hold.

\subsection{An object $\mathfrak{E}\in\mathbb{C}$-$Cat$ and a natural isomorphism $\mho$ for $\mathbb{C}$-$Cat$}
\label{subsection:matcalE e mho}

In this last subsection \ref{subsection:matcalE e mho}, we will introduce the remaining (symmetric) monoidal structure for $\mathbb{C}$-$Cat$, and we will prove the remaining coherence axiom.\\

Consider $\mathfrak{E}\in\mathbb{C}$-$Cat$ such that:
\begin{itemize}
\item $ob(\mathfrak{E})=\{\ast\}$ is a singleton set;
\item $\mathfrak{E}(\ast,\ast)=E$ is the object in the base monoidal structure;
\item $M^\mathfrak{E}_{\ast,\ast,\ast}=\rho_E:E\otimes E\rightarrow E$;
\item $j^\mathfrak{E}_\ast=1_E:E\rightarrow E$.
\end{itemize}

The associativity and unit axioms hold for $\mathfrak{E}$ because their diagrams are instances of the class of diagrams known to commute (cf.\ \cite[\S VII]{SM:cat}).\\

Define
$$\mho_\mathcal{A}:\mathcal{A}\bigcirc \mathfrak{E}\rightarrow\mathcal{A},$$
as $ob\mho_\mathcal{A}:ob(\mathcal{A})\times ob(\mathfrak{E})\rightarrow ob(\mathcal{A})$, $(a,\ast)\mapsto a$, the projection which is an isomorphism in $Set$, and

$$(\mho_\mathcal{A})_{(a,\ast ),(\bar{a},\ast )}
=\rho_{\mathcal{A}(a,\bar{a})}:\mathcal{A}(a,\bar{a})\otimes E\rightarrow\mathcal{A}(a,\bar{a}),$$

for every $\mathbb{C}$-category $\mathcal{A}$ and $a,\bar{a}\in ob(\mathcal{A})$.\\

Is $\mho$ a natural isomorphism? That is, does the diagram corresponding to the following equation commute, for every $\mathbb{C}$-functor $T:\mathcal{A}\rightarrow \mathcal{B}$?
$$T\circ\mho_\mathcal{A}=\mho_\mathcal{B}\circ(T\bigcirc 1_\mathfrak{E}).$$

The image of this equation by the functor $ob:\mathbb{C}$-$Cat\rightarrow Set$ is true, since

\begin{picture}(200,40)
\put(20,15){$(a,\ast)$}
\put(55,17){\vector (1,0){60}}\put(125,15){$a$}
\put(140,17){\vector (1,0){60}}\put(205,15){$T(a)$}

\put(50,10){\vector (3,-1){50}}\put(155,-7){\vector (3,1){50}}

\put(105,-20){$(T(a),\ast)$\hspace{20pt} .}
\end{picture}\\\\

As $\rho$ is natural by assumption, the following equation holds,
$$T_{a,\bar{a}}\circ\rho_{\mathcal{A}(a,\bar{a})}=\rho_{\mathcal{B}(b,\bar{b})}\circ (T_{a,\bar{a}}\otimes 1_E),$$
for every pair $(a,\bar{a})\in ob(\mathcal{A})\times ob(\mathcal{A})$; hence, $\mho$ is natural.\\

We will check now the coherence axiom corresponding to the following equation, for every $\mathcal{A},\mathcal{B}\in\mathbb{C}$-$Cat$,\\

$\mho_\mathcal{A}\bigcirc 1_\mathcal{B}=(1_\mathcal{A}\bigcirc (\mho_\mathcal{B}\circ\Gamma_{\mathfrak{E},\mathcal{B}}))\circ\wedge_{\mathcal{A},\mathfrak{E},\mathcal{B}}:
(\mathcal{A}\bigcirc\mathfrak{E})\bigcirc\mathcal{B}\rightarrow
\mathcal{A}\bigcirc(\mathfrak{E}\bigcirc\mathcal{B})\rightarrow\mathcal{A}\bigcirc(\mathcal{B}\bigcirc\mathfrak{E})
\rightarrow\mathcal{A}\bigcirc\mathcal{B}$\\

\noindent whose image by the functor $ob:\mathbb{C}$-$Cat\rightarrow Set$ is obviously true, since $((a,\ast),b)\mapsto (a,(\ast,b))\mapsto (a,(b,\ast))\mapsto (a,b)$;\\

as $\rho$ satisfies the coherence axioms in $\mathbb{C}$, then the following equation holds\\

$\rho_{\mathcal{A}(a,\bar{a})}\otimes 1_{\mathcal{B}(b,\bar{b})}=
(1_{\mathcal{\mathcal{A}}(a,\bar{a})}\otimes(\rho_{\mathcal{B}(b,\bar{b})}
\circ\gamma_{E,\mathcal{B}(b,\bar{b})}))\circ\alpha_{\mathcal{A}(a,\bar{a}),E,B(b,\bar{b})}:\\
(\mathcal{A}(a,\bar{a})\otimes E)\otimes\mathcal{B}(b,\bar{b})\rightarrow\mathcal{A}(a,\bar{a})\otimes(E\otimes\mathcal{B}(b,\bar{b}))
\rightarrow\mathcal{A}(a,\bar{a})\otimes\mathcal{B}(b,\bar{b})$\\

\noindent for every two pairs $(a,\bar{a})\in ob(\mathcal{A})\times ob(\mathcal{A})$, $(b,\bar{b})\in ob(\mathcal{B})\times ob(\mathcal{B})$; therefore, the coherence axiom for $\mho$ holds.

\section{The derived adjunction is (symmetric) monoidal}
\label{section:derived adjunction monoidal}

In section \ref{section:derivedMonoidal} it was shown that

\begin{center}
$(1)\ \mathbb{C}$-$Cat=(\mathbb{C}$-$Cat,\bigcirc,\mathfrak{E},\wedge,\Gamma,\mho)$
\end{center}
is a symmetric monoidal category, obtained from the symmetric monoidal category $\mathbb{C}=(\mathbb{C},\otimes,E,\alpha,\gamma,\rho)$.

Analogously,
\begin{center}
$(2)\ \mathbb{X}$-$Cat=(\mathbb{X}$-$Cat,\nabla,\mathcal{I},\vee,\top,\Re)$\end{center} can be obtained from the symmetric monoidal category $\mathbb{X}=(\mathbb{X},\lozenge,I,\mathfrak{a},t,r)$, being also a symmetric monoidal category.\\

In this section \ref{section:derived adjunction monoidal}, it will be shown that the derived adjunction (cf.\ \ref{sec:derived})
\begin{center}$(\mathbb{F},\mathbb{G},\Theta,\Upsilon):\mathbb{C}$-$Cat\rightarrow\mathbb{X}$-$Cat$\end{center} is a (symmetric) monoidal adjunction, likewise the adjunction $(F,G,\eta,\varepsilon):\mathbb{C}\rightarrow\mathbb{X}$, but with respect to the derived monoidal categories $(1)$ and $(2)$ in the paragraph above.

\subsection{$\mathbb{F}$ preserves the (symmetric) monoidal structure}
\label{subsection:FpreservesMonoStruc}\ \\

\begin{itemize}
\item $\mathbb{F}(\mathcal{A}\bigcirc\mathcal{B})=\mathbb{F}(\mathcal{A})\nabla\mathbb{F}(\mathcal{B})$, for every $\mathcal{A},\mathcal{B}\in\mathbb{C}$-$Cat$, because:

    $ob\mathbb{F}(\mathcal{A}\bigcirc\mathcal{B})=ob(\mathcal{A}\bigcirc\mathcal{B})
    =ob(\mathcal{A})\times ob(\mathcal{B})=ob\mathbb{F}(\mathcal{A})\times ob\mathbb{F}(\mathcal{B})
    =ob(\mathbb{F}(\mathcal{A})\nabla\mathbb{F}(\mathcal{B}))$;

    $\mathbb{F}(\mathcal{A}\bigcirc\mathcal{B})((a,b),(\bar{a},\bar{b}))
    =F(\mathcal{A}\bigcirc\mathcal{B}((a,b),(\bar{a},\bar{b})))
    =F(\mathcal{A}(a,\bar{a})\otimes\mathcal{B}(b,\bar{b}))
    =F(\mathcal{A}(a,\bar{a}))\lozenge F(\mathcal{B}(b,\bar{b}))
    =\mathbb{F}(\mathcal{A})(a,\bar{a})\lozenge \mathbb{F}(\mathcal{B})(b,\bar{b})
    =(\mathbb{F}(\mathcal{A})\nabla\mathbb{F}(\mathcal{B}))((a,b),(\bar{a},\bar{b}))$,

    $M^{\mathbb{F}(\mathcal{A}\bigcirc \mathcal{B})}_{(a,b),(\bar{a},\bar{b}),(\bar{\bar{a}},\bar{\bar{b}})}=
    F(M^{\mathcal{A}\bigcirc \mathcal{B}}_{(a,b),(\bar{a},\bar{b}),(\bar{\bar{a}},\bar{\bar{b}})})\\
    = F((M^\mathcal{A}_{a,\bar{a},\bar{\bar{a}}}\otimes M^\mathcal{B}_{b,\bar{b},\bar{\bar{b}}})\circ m_{\mathcal{A}(\bar{a},\bar{\bar{a}}),\mathcal{B}(\bar{b},\bar{\bar{b}}),\mathcal{A}(a,\bar{a}),
    \mathcal{B}(b,\bar{b})})\\
    = F(M^\mathcal{A}_{a,\bar{a},\bar{\bar{a}}}\otimes M^\mathcal{B}_{b,\bar{b},\bar{\bar{b}}})\circ Fm_{\mathcal{A}(\bar{a},\bar{\bar{a}}),\mathcal{B}(\bar{b},\bar{\bar{b}}),\mathcal{A}(a,\bar{a}),\mathcal{B}(b,\bar{b})}\\
    =(F(M^\mathcal{A}_{a,\bar{a},\bar{\bar{a}}})\lozenge F(M^\mathcal{B}_{b,\bar{b},\bar{\bar{b}}}))\circ m_{F(\mathcal{A}(\bar{a},\bar{\bar{a}})),F(\mathcal{B}(\bar{b},\bar{\bar{b}})),
    F(\mathcal{A}(a,\bar{a})),F(\mathcal{B}(b,\bar{b}))}\\
    = (M^{\mathbb{F}(\mathcal{A})}_{a,\bar{a},\bar{\bar{a}}}\lozenge M^{\mathbb{F}(\mathcal{B})}_{b,\bar{b},\bar{\bar{b}}})\circ m_{\mathbb{F}(\mathcal{A})(\bar{a},\bar{\bar{a}}),\mathbb{F}(\mathcal{B})(\bar{b},\bar{\bar{b}}),
    \mathbb{F}(\mathcal{A})(a,\bar{a}),
    \mathbb{F}(\mathcal{B})(b,\bar{b})}\\
    =M^{\mathbb{F}(\mathcal{A})\nabla \mathbb{F}(\mathcal{B})}_{(a,b),(\bar{a},\bar{b}),(\bar{\bar{a}},\bar{\bar{b}})}$,

    $j^{\mathbb{F}(\mathcal{A}\bigcirc\mathcal{B})}_{(a,b)}=Fj^{\mathcal{A}\bigcirc\mathcal{B}}_{(a,b)}
    =F((j^\mathcal{A}_a\otimes j^\mathcal{B}_b)\circ\rho_E)=\\ F(j^\mathcal{A}_a\otimes j^\mathcal{B}_b)\circ F\rho_E=(Fj^\mathcal{A}_a\lozenge Fj^\mathcal{B}_b)\circ r_I=\\(j^{\mathbb{F}(\mathcal{A})}_a\lozenge j^{\mathbb{F}(\mathcal{B})}_b)\circ r_I=j^{\mathbb{F}(\mathcal{A})\nabla\mathbb{F}(\mathcal{B})}_{(a,b)}$,

    for every three pairs $(a,b),(\bar{a},\bar{b}),(\bar{\bar{a}},\bar{\bar{b}})\in ob(\mathcal{A})\times ob(\mathcal{B})$.
    \\

    \item $\mathbb{F}(T\bigcirc S)=\mathbb{F}(T)\nabla\mathbb{F}(S)$, for every two $\mathbb{C}$-functors $T:\mathcal{A}\rightarrow \mathcal{A}'$ and $S:\mathcal{B}\rightarrow\mathcal{B}'$, because:

       $ob\mathbb{F}(T\bigcirc S)=ob(T\bigcirc S)=obT\times obS=ob\mathbb{F}(T)\times ob\mathbb{F}(S)
       =ob(\mathbb{F}(T)\nabla\mathbb{F}(S))$;

       $(\mathbb{F}(T\bigcirc S))_{(a,b),(\bar{a},\bar{b})}=F(T\bigcirc S)_{(a,b),(\bar{a},\bar{b})}
       =F(T_{a,\bar{a}}\otimes S_{b,\bar{b}})=FT_{a,\bar{a}}\lozenge FS_{b,\bar{b}}
       =(\mathbb{F}T)_{a,\bar{a}}\lozenge (\mathbb{F}S)_{b,\bar{b}}
       =(\mathbb{F}T\nabla\mathbb{F}S)_{(a,b),(\bar{a},\bar{b})}$,

       for every two pairs $(a,b),(\bar{a},\bar{b})\in ob(\mathcal{A})\times ob(\mathcal{B})$.\\

\item $\mathbb{F}(\mathfrak{E})=\mathcal{I}$ because:

 $ob\mathbb{F}(\mathfrak{E})=ob(\mathfrak{E})=\{\ast\}=ob(\mathcal{I})$;

 $\mathbb{F}(\mathfrak{E})(\ast,\ast)=F(\mathfrak{E}(\ast,\ast))=F(E)=I=\mathcal{I}(\ast,\ast)$;

 $M^{\mathbb{F}(\mathfrak{E})}_{\ast,\ast,\ast}=FM^\mathfrak{E}_{\ast,\ast,\ast}
 =F\rho_E=r_I=M^\mathcal{I}_{\ast,\ast,\ast}:I\lozenge I\rightarrow I$, since $F$ preserves $\rho$;

 $j^{\mathbb{F}(\mathfrak{E})}_\ast =Fj^\mathfrak{E}_\ast =F1_E=1_{F(E)}=1_I=j^\mathcal{I}_\ast:I\rightarrow I$, since $F(E)=I$.\\

 \item $\mathbb{F}\wedge_{\mathcal{A},\mathcal{B},\mathcal{C}}=
     \vee_{\mathbb{F}(\mathcal{A}),\mathbb{F}(\mathcal{B}),\mathbb{F}(\mathcal{C})}$, for every $\mathcal{A},\mathcal{B},\mathcal{C}\in \mathbb{C}$-$Cat$, because:

 $ob\mathbb{F}\wedge_{\mathcal{A},\mathcal{B},\mathcal{C}}=ob\wedge_{\mathcal{A},\mathcal{B},\mathcal{C}}:
 (ob(\mathcal{A})\times ob(\mathcal{B}))\times ob(\mathcal{C})\rightarrow ob(\mathcal{A})\times (ob(\mathcal{B})\times ob(\mathcal{C}))$

 is the canonical isomorphism in $Set$,

 $ob\vee_{\mathbb{F}(\mathcal{A}),\mathbb{F}(\mathcal{B}),\mathbb{F}(\mathcal{C})}:(ob\mathbb{F}(\mathcal{A})\times ob\mathbb{F}(\mathcal{B}))\times ob\mathbb{F}(\mathcal{C})\rightarrow ob\mathbb{F}(\mathcal{A})\times (ob\mathbb{F}(\mathcal{B})\times ob\mathbb{F}(\mathcal{C}))$

  is also the canonical isomorphism in $Set$, and $ob\vee_{\mathbb{F}(\mathcal{A}),\mathbb{F}(\mathcal{B}),\mathbb{F}(\mathcal{C})}
 =ob\mathbb{F}\wedge_{\mathcal{A},\mathcal{B},\mathcal{C}}$ since $ob\mathbb{F}(\mathcal{A})=ob(\mathcal{A})$, $ob\mathbb{F}(\mathcal{B})=ob(\mathcal{B})$ and $ob\mathbb{F}(\mathcal{C})=ob(\mathcal{C})$;

 $(\mathbb{F}\wedge_{\mathcal{A},\mathcal{B},\mathcal{C}})_{((a,b),c),((\bar{a},\bar{b}),\bar{c})}=
 F(\wedge_{\mathcal{A},\mathcal{B},\mathcal{C}})_{((a,b),c),((\bar{a},\bar{b}),\bar{c})}
 =F\alpha_{\mathcal{A}(a,\bar{a}),\mathcal{B}(b,\bar{b}),\mathcal{C}(c,\bar{c})}
 =\mathfrak{a}_{F(\mathcal{A}(a,\bar{a})),F(\mathcal{B}(b,\bar{b})),F(\mathcal{C}(c,\bar{c}))}
 =\mathfrak{a}_{\mathbb{F}(\mathcal{A})(a,\bar{a}),\mathbb{F}(\mathcal{B})(b,\bar{b}),\mathbb{F}(\mathcal{C})(c,\bar{c})}
 =(\vee_{\mathbb{F}(\mathcal{A}),\mathbb{F}(\mathcal{B}),\mathbb{F}(\mathcal{C})})_{((a,b),c),((\bar{a},\bar{b}),\bar{c})}$,
 for every three pairs $(a,\bar{a})\in ob(\mathcal{A})\times ob(\mathcal{A}),(b,\bar{b})\in ob(\mathcal{B})\times ob(\mathcal{B})$ and $(c,\bar{c})\in ob(\mathcal{C})\times ob(\mathcal{C})$.\\

 \item $\mathbb{F}\Gamma_{\mathcal{A},\mathcal{B}}=\top_{\mathbb{F}(\mathcal{A}),\mathbb{F}(\mathcal{B})}$, for every $\mathcal{A},\mathcal{B}\in \mathbb{C}$-$Cat$, because:

     $ob\mathbb{F}\Gamma_{\mathcal{A},\mathcal{B}}=ob\Gamma_{\mathcal{A},\mathcal{B}}:ob(\mathcal{A})\times ob(\mathcal{B})\rightarrow ob(\mathcal{B})\times ob(\mathcal{A})$

     is the canonical isomorphism in $Set$,

     $ob\top_{\mathbb{F}(\mathcal{A}),\mathbb{F}(\mathcal{B})}:ob\mathbb{F}(\mathcal{A})\times ob\mathbb{F}(\mathcal{B})\rightarrow ob\mathbb{F}(\mathcal{B})\times ob\mathbb{F}(\mathcal{A})$

     is also the canonical isomorphism in $Set$, and $ob\top_{\mathbb{F}(\mathcal{A}),\mathbb{F}(\mathcal{B})}=ob\mathbb{F}\Gamma_{\mathcal{A},\mathcal{B}}$ since $ob\mathbb{F}(\mathcal{A})=ob(\mathcal{A})$ and $ob\mathbb{F}(\mathcal{B})=ob(\mathcal{B})$;

     $(\mathbb{F}\Gamma_{\mathcal{A},\mathcal{B}})_{(a,b),(\bar{a},\bar{b})}
     =F(\Gamma_{\mathcal{A},\mathcal{B}})_{(a,b),(\bar{a},\bar{b})}
     =F\gamma_{\mathcal{A}(a,\bar{a}),\mathcal{B}(b,\bar{b})}
     =t_{F(\mathcal{A}(a,\bar{a})),F(\mathcal{B}(b,\bar{b}))}
     =t_{\mathbb{F}(\mathcal{A})(a,\bar{a}),\mathbb{F}(\mathcal{B})(b,\bar{b})}
     =(\top_{\mathbb{F}(\mathcal{A}),\mathbb{F}(\mathcal{B})})_{(a,b),(\bar{a},\bar{b})}$,

     for every two pairs $(a,b),(\bar{a},\bar{b})\in ob(\mathcal{A})\times ob(\mathcal{B})$.\\

     \item $\mathbb{F}\mho_\mathcal{A}=\Re_{\mathbb{F}(\mathcal{A})}$, for every $\mathcal{A}\in\mathbb{C}$-$Cat$, because:

     $ob\mathbb{F}\mho_\mathcal{A}=ob\mho_\mathcal{A}:ob(\mathcal{A})\times\{\ast\}\rightarrow ob(\mathcal{A})$

     is the canonical projection in $Set$,

     $ob\Re_{\mathbb{F}(\mathcal{A})}=ob\mathbb{F}(\mathcal{A})\times\{\ast\}\rightarrow ob\mathbb{F}(\mathcal{A})$
     is also the canonical projection in $Set$, and $ob\mathbb{F}\mho_\mathcal{A}=ob\Re_{\mathbb{F}(\mathcal{A})}$ since $ob\mathbb{F}(\mathcal{A})=ob(\mathcal{A})$;

     $(\mathbb{F}\mho_\mathcal{A})_{(a,\ast),(\bar{a},\ast)}=F(\mho_\mathcal{A})_{(a,\ast),(\bar{a},\ast)}
     =F\rho_{\mathcal{A}(a,\bar{a})}=r_{\mathcal{A}(a,\bar{a})}=(\Re_\mathcal{A})_{(a,\ast),(\bar{a},\ast)}$,

     for every pair $(a,\bar{a})\in ob(\mathcal{A})\times ob(\mathcal{A})$.
\end{itemize}

\subsection{$\mathbb{G}$ preserves the (symmetric) monoidal structure}
\label{subsection:GpreservesMonoStruc}

Analogously, the same conclusions, drawn for $\mathbb{F}:\mathbb{C}$-$Cat\rightarrow\mathbb{X}$-$Cat$ in the previous subsection \ref{subsection:FpreservesMonoStruc}, hold as well for $\mathbb{G}:\mathbb{X}$-$Cat\rightarrow\mathbb{C}$-$Cat$, which therefore preserves the symmetric monoidal structure.

\section{Limits in a category $\mathcal{V}$-$Cat$ of all $\mathcal{V}$-categories ($\mathcal{V}=\mathbb{C},\mathbb{X}$)}\label{section:limitsV-Cat}

In this section, it will be proved that the existence of limits in a category $\mathbb{C}$ implies their existence in $\mathbb{C}$-$Cat$ as well (cf.\ the following Proposition \ref{proposition:limitsC-Cat}), showing how they can be obtained in $\mathbb{C}$-$Cat$ using the limits in $\mathbb{C}$ and in the category $Set$ of all sets.

\begin{proposition}\label{proposition:limitsC-Cat}
Given a functor \begin{center}$A:\mathbb{I}\rightarrow\mathbb{C}$-$Cat$,\end{center} $$\tau :i\rightarrow j\mapsto T=A\tau :\mathcal{A}_i\rightarrow \mathcal{A}_j,$$ consider:

\begin{itemize}
\item the underlying object functor \begin{center}$ob:\mathbb{C}$-$Cat\rightarrow Set$,\end{center} $$S:\mathcal{A}\rightarrow \mathcal{B}\mapsto obS:ob(\mathcal{A})\rightarrow ob(\mathcal{B});$$

\item its composition with $A$ \begin{center}$ob\circ A:\mathbb{I}\rightarrow\mathbb{C}$-$Cat\rightarrow Set$,\end{center} $$\tau :i\rightarrow j\mapsto obT=obA\tau:ob(\mathcal{A}_i)\rightarrow ob(\mathcal{A}_j),$$

\item and the limiting cone of $ob\circ A$,\footnote{$\Delta :Set\rightarrow Set^\mathbb{I}$ is the diagonal functor, right-adjoint of $\underleftarrow{Lim}$.} $$ob\pi :\Delta ob(\mathcal{L})\rightarrow ob\circ A;$$

\item each projection of this limiting cone $ob\pi$ in $Set$ will be denoted by $$ob\pi^i:ob(\mathcal{L})\rightarrow ob(\mathcal{A}_i);$$

\item if $a\in ob(\mathcal{L})$ then $a=(a_i)_{i\in\mathbb{I}}$, since $ob(\mathcal{L})\subseteq\prod_{i\in\mathbb{I}}ob(\mathcal{A}_i)$;

\item for each pair $a=(a_i)_{i\in\mathbb{I}}$, $b=(b_i)_{i\in\mathbb{I}}\in ob(\mathcal{L})$, the functor $$A_{a,b}:\mathbb{I}\rightarrow\mathbb{C}$$ $$\tau :i\rightarrow j\mapsto T_{a_i,b_i}:\mathcal{A}_i(a_i,b_i)\rightarrow\mathcal{A}_j(a_j,b_j),$$ where $a_i=ob\pi^i(a)$, $b_i=ob\pi^i(b)$, $a_j=obT(a_i)=ob\pi^j(a)$, $b_j=obT(b_i)=ob\pi^j(b)$.

\end{itemize}
\vspace{10pt}
Suppose that $A_{a,b}$ has a limiting cone in $\mathbb{C}$,\footnote{$\Delta :\mathbb{I}\rightarrow\mathbb{C}^\mathbb{I}$, another diagonal functor.} $$\pi_{a,b}:\Delta\mathcal{L}(a,b)\rightarrow A_{a,b},$$ with projections $\pi^i_{a,b}:\mathcal{L}(a,b)\rightarrow\mathcal{A}_i(a_i,b_i)$, $i\in\mathbb{I}$, for every pair $a,b\in ob(\mathcal{L})$.

Then, the functor $A$ has a limiting cone $\pi:\Delta\mathcal{L}\rightarrow A$.\footnote{$\Delta:\mathbb{I}\rightarrow (\mathbb{C}$-$Cat)^\mathbb{I}$, another diagonal functor.}

\end{proposition}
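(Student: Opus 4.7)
The plan is to build $\mathcal{L}$ and the cone $\pi$ hom-componentwise, relying on the universal properties of the given limits in $Set$ and $\mathbb{C}$ at each step to define structure and verify axioms.

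First I would define the $\mathbb{C}$-category $\mathcal{L}$: its object set is the given $ob(\mathcal{L})$, its hom-objects are $\mathcal{L}(a,b)$ for every pair $a,b\in ob(\mathcal{L})$, and the projections $\pi^{i}_{a,b}:\mathcal{L}(a,b)\to \mathcal{A}_{i}(a_i,b_i)$ are taken as given. To define the composition law $M^{\mathcal{L}}_{a,b,c}:\mathcal{L}(b,c)\otimes\mathcal{L}(a,b)\to\mathcal{L}(a,c)$, I would observe that the family $\{M^{\mathcal{A}_i}_{a_i,b_i,c_i}\circ(\pi^{i}_{b,c}\otimes\pi^{i}_{a,b})\}_{i\in\mathbb{I}}$ is a cone on $A_{a,c}$, because each $T=A\tau$ is a $\mathbb{C}$-functor (compatibility with composition) and $\otimes$ is a bifunctor; by the universal property of $\pi_{a,c}$ in $\mathbb{C}$, this cone factors uniquely through $\mathcal{L}(a,c)$, and that unique factorization is $M^{\mathcal{L}}_{a,b,c}$. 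The identity element $j^{\mathcal{L}}_{a}:E\to\mathcal{L}(a,a)$ is defined analogously, as the unique factorization of the cone $\{j^{\mathcal{A}_i}_{a_i}\}_{i\in\mathbb{I}}$ through $\mathcal{L}(a,a)$, using that each $T$ is compatible with identities.

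Next I would verify the associativity and unit axioms for $\mathcal{L}$. For associativity, both sides of the axiom give morphisms $(\mathcal{L}(c,d)\otimes\mathcal{L}(b,c))\otimes\mathcal{L}(a,b)\to\mathcal{L}(a,d)$; by the universal property of $\pi_{a,d}$, two such morphisms agree iff they agree after composition with every $\pi^{i}_{a,d}$; but after such composition, the defining equations of $M^{\mathcal{L}}$ together with naturality of $\alpha$ in $\mathbb{C}$ reduce the question to the associativity axioms in each $\mathcal{A}_i$, which hold by hypothesis. The unit axioms are handled in exactly the same way. Then I would define $\pi^{i}:\mathcal{L}\to \mathcal{A}_i$ by $ob\pi^i$ as given and $(\pi^i)_{a,b}=\pi^{i}_{a,b}$, and check it is a $\mathbb{C}$-functor: compatibility with composition is precisely the defining equation of $M^{\mathcal{L}}$, and compatibility with identities is the defining equation of $j^{\mathcal{L}}_a$. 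That $\pi:\Delta\mathcal{L}\to A$ is a natural cone then follows because on objects the family $obT\circ ob\pi^i=ob\pi^j$ is the $Set$-cone, and on hom-objects the equality $T_{a_i,b_i}\circ\pi^{i}_{a,b}=\pi^{j}_{a,b}$ is the defining equation of the $\mathbb{C}$-cone $\pi_{a,b}$ on $A_{a,b}$.

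Finally I would establish universality. Given any cone $\sigma:\Delta\mathcal{M}\to A$ in $\mathbb{C}$-$Cat$, the family $\{ob\sigma^i\}$ is a cone on $ob\circ A$ in $Set$, hence induces a unique function $obU:ob(\mathcal{M})\to ob(\mathcal{L})$ with $ob\pi^i\circ obU=ob\sigma^i$. For each pair $m,m'\in ob(\mathcal{M})$, the family $\{(\sigma^i)_{m,m'}\}_i$ is a cone on $A_{U(m),U(m')}$ in $\mathbb{C}$ (using naturality of $\sigma$ and the definition of $A_{a,b}$), hence induces a unique morphism $U_{m,m'}:\mathcal{M}(m,m')\to\mathcal{L}(U(m),U(m'))$ in $\mathbb{C}$ with $\pi^{i}_{U(m),U(m')}\circ U_{m,m'}=(\sigma^i)_{m,m'}$. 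That $U$ is a $\mathbb{C}$-functor amounts to verifying two equations in $\mathbb{C}$, each with codomain $\mathcal{L}(\cdot,\cdot)$; again by the universal property of the limits $\pi_{a,c}$ and $\pi_{a,a}$, it is enough to check equality after postcomposition with every $\pi^i$, which follows because each $\sigma^i$ is a $\mathbb{C}$-functor and by the defining equations of $M^{\mathcal{L}}$ and $j^{\mathcal{L}}$. Uniqueness of $U$ follows from uniqueness of $obU$ in $Set$ and of each $U_{m,m'}$ in $\mathbb{C}$.

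The step I expect to be most tedious, though not conceptually difficult, is verifying the associativity axiom for $M^{\mathcal{L}}$: unwinding that both sides equalize after composition with $\pi^{i}_{a,d}$ requires carefully interleaving the defining equations of $M^{\mathcal{L}}$ with the naturality of $\alpha$ and the bifunctoriality of $\otimes$, so that the associativity axiom in each $\mathcal{A}_i$ can be invoked. All other verifications reduce cleanly to universality plus either a functoriality of $T=A\tau$ or a naturality of one of $\alpha,\gamma,\rho$.
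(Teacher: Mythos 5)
Your proposal is correct and follows essentially the same route as the paper's own proof: construct $\mathcal{L}$ with the given object set and hom-limits, induce $M^{\mathcal{L}}$ and $j^{\mathcal{L}}$ from the cones built out of the $M^{\mathcal{A}_i}$ and $j^{\mathcal{A}_i}$ (using functoriality of the $A\tau$ to see these are cones), verify the axioms by postcomposing with the projections $\pi^i_{a,b}$ and invoking uniqueness of factorizations through the limiting cones, and obtain the mediating $\mathbb{C}$-functor for an arbitrary cone from the universal properties in $Set$ and $\mathbb{C}$ componentwise. No gaps; the only difference is presentational (you phrase the induced structure maps directly as factorizations of cones, where the paper writes the same equations in terms of natural transformations between the diagrams).
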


\begin{proof}

First, the description of $\mathcal{L}\in\mathbb{C}$-$Cat$ and of each $\mathbb{C}$-functor projection $\pi^i:\mathcal{L}\rightarrow\mathcal{A}_i$ will be given, $i\in\mathbb{I}$.

Secondly, it will be shown that $\pi=(\pi^i)_{i\in\mathbb{I}}$ is in fact a limiting cone in $\mathbb{C}$-$Cat$.

In more detail: we will describe (1) $ob(\mathcal{L})$;

\noindent (2) $\mathcal{L}(a,b)$, (3) $M^\mathcal{L}_{a,b,c}:\mathcal{L}(b,c)\otimes\mathcal{L}(a,b)\rightarrow\mathcal{L}(a,c)$, (4) $j^\mathcal{L}_a:E\rightarrow\mathcal{L}(a,a)$, for all $a,b,c\in ob(\mathcal{L})$;

and we will check if (5) $\mathcal{L}$ is a $\mathbb{C}$-category, that is, if (5.1) the associativity axioms and (5.2) unit axioms hold;

(6) if $\pi^i:\mathcal{L}\rightarrow\mathcal{A}_i$ is a $\mathbb{C}$-functor, for each $i\in\mathbb{I}$;

and finally if (7) $\pi$ is a limiting cone in $\mathbb{C}$-$Cat$.\\

(1) The set $ob(\mathcal{L})$ of objects was already given above in the statement, it is the limit of $ob\circ A$ in $Set$.

(2) For each pair $a,b$ of objects of $\mathcal{L}$, the hom-object $\mathcal{L}(a,b)$ is the limit of $A_{a,b}$ in $\mathbb{C}$, which exists by assumption: $\Delta\mathcal{L}(a,b)^{\underrightarrow{\pi_{a,b}}}A_{a,b}$ (the projections are $\mathcal{L}(a,b)^{\underrightarrow{\pi^i_{a,b}}}A_i(a_i,b_i)$, $i\in\mathbb{I}$).

(3) The composition law for each triple $a,b,c$ of objects is given by the unique morphism
$$M^\mathcal{L}_{a,b,c}:\mathcal{L}(b,c)\otimes\mathcal{L}(a,b)\rightarrow\mathcal{L}(a,c)$$
in the obvious limit diagram in $\mathbb{C}$ corresponding to the equation
$$M_{a,b,c}\cdot (\pi_{b,c}\otimes\pi_{a,b})=\pi_{a,c}\cdot\Delta M^\mathcal{L}_{a,b,c}:\Delta(\mathcal{L}(b,c)\otimes\mathcal{L}(a,b))\rightarrow\Delta\mathcal{L}(a,c)\rightarrow A_{a,c},$$
where
$$M_{a,b,c}=(M^{\mathcal{A}_i}_{a_i,b_i,c_i})_{i\in\mathbb{I}}:\otimes\circ <A_{b,c},A_{a,b}>\rightarrow A_{a,c}:\mathbb{I}\rightarrow\mathbb{C}\times\mathbb{C}\rightarrow\mathbb{C}$$
is a natural transformation due to the compatibility with composition of the $\mathbb{C}$-functors,
$$\pi_{b,c}\otimes\pi_{a,b}=(\pi^i_{b,c}\otimes\pi^i_{a,b})_{i\in\mathbb{I}}:\Delta(\mathcal{L}(b,c)\otimes\mathcal{L}(a,b))\rightarrow\otimes\circ <A_{b,c},A_{a,b}>:\mathbb{I}\rightarrow\mathbb{C}\times\mathbb{C}\rightarrow\mathbb{C}$$
is the cone obtained by tensoring the two cones

$\pi_{b,c}:\Delta\mathcal{L}_{b,c}\rightarrow A_{b,c}:\mathbb{I}\rightarrow\mathbb{C}$ and $\pi_{a,b}:\Delta\mathcal{L}_{a,b}\rightarrow A_{a,b}:\mathbb{I}\rightarrow\mathbb{C}$.

(4) The identity element is given by the unique morphism
$$j^\mathcal{L}_a:E\rightarrow\mathcal{L}(a,a),$$ for each object $a\in ob(\mathcal{L})$,
in the obvious limit diagram in $\mathbb{C}$ corresponding to the equation
$$j_a=\pi_{a,a}\cdot\Delta j^\mathcal{L}_a:\Delta E\rightarrow\Delta\mathcal{L}(a,a)\rightarrow A_{a,a},$$
where
$$j_a=(j^{\mathcal{A}_i}_a)_{i\in\mathbb{I}}:\Delta E\rightarrow A_{a,a}:\mathbb{I}\rightarrow\mathbb{C}$$
is a natural transformation due to the compatibility with identities of the $\mathbb{C}$-functors.\\

It is then a trivial task to check that (5.1) the associative and (5.2) unit axioms hold for $\mathcal{L}$ so defined,

and therefore that (5) it is a $\mathbb{C}$-category:\\

checking the associativity axiom (5.1) is to ask if the following equation holds,
$$M^\mathcal{L}_{a,b,d}\circ (M^\mathcal{L}_{b,c,d}\otimes 1_{\mathcal{L}(a,b})
=M^\mathcal{L}_{a,c,d}\circ (1_{\mathcal{L}(c,d)}\otimes M^\mathcal{L}_{a,b,c})\circ\alpha_{\mathcal{L}(c,d),\mathcal{L}(b,c),\mathcal{L}(a,b)},$$
which is true by the uniqueness due to the limiting cone
$$\pi_{a,d}:\Delta \mathcal{L}(a,d)\rightarrow A(a,d),$$
since (cf.\ (2) and (3) just above for the notations)

$M_{a,b,d}\cdot (M_{b,c,d}\otimes 1_{A(a,b})\cdot ((\pi_{c,d}\otimes\pi_{b,c})\otimes\pi_{a,b})\\
=M_{a,c,d}\cdot (1_{A(c,d)}\otimes M_{a,b,c})\cdot (\pi_{c,d}\otimes (\pi_{b,c}\otimes\pi_{a,b}))\cdot\Delta\alpha_{\mathcal{L}(c,d),\mathcal{L}(b,c),\mathcal{L}(a,b)}
:\Delta ((\mathcal{L}(c,d)\otimes\mathcal{L}(b,c))\otimes\mathcal{L}(a,b))\rightarrow A(a,d)$;\\

consider the commutative diagram in $\mathbb{C}$

\begin{picture}(370,70)

\put(-20,40){$\mathcal{L}(a,b)\otimes E$}\put(55,43){\vector(1,0){60}}\put(55,49){$1_{\mathcal{L}(a,b)}\otimes j^\mathcal{L}_a$}
\put(130,40){$\mathcal{L}(a,b)\otimes\mathcal{L}(a,a)$}\put(230,43){\vector(1,0){60}}
\put(237,49){$M^\mathcal{L}_{a,a,b}$}\put(293,40){$\mathcal{L}(a,b)$}

\put(20,33){\vector(0,-1){20}}\put(23,20){$\pi^i_{a,b}\otimes 1_E$}
\put(175,20){$\pi^i_{a,b}\otimes\pi^i_{a,a}$}\put(173,33){\vector(0,-1){20}}
\put(305,20){$\pi^i_{a,b}$}\put(303,33){\vector(0,-1){20}}

\put(-20,0){$\mathcal{A}_i(a_i,b_i)\otimes E$}\put(55,3){\vector(1,0){60}}
\put(52,9){$1_{\mathcal{A}_i(a_i,b_i)}\otimes j^{\mathcal{A}_i}_{a_i}$}
\put(120,0){$\mathcal{A}_i(a_i,b_i)\otimes\mathcal{A}_i(a_i,a_i)$}
\put(230,3){\vector(1,0){60}}\put(237,9){$M^{\mathcal{A}_i}_{a_i,a_i,b_i}$}
\put(293,0){$\mathcal{A}_i(a_i,b_i)$,}

\end{picture}\\

as $\rho_{\mathcal{A}_i(a_i,b_i)}=M^{\mathcal{A}_i}_{a_i,a_i,b_i}\circ (1_{\mathcal{A}_i(a_i,b_i)}\otimes j^{\mathcal{A}_i}_{a_i})$, for every $i\in \mathbb{I}$,

then

$\pi^i_{a,b}\circ\rho_{\mathcal{L}(a,b)}=\rho_{\mathcal{A}_i(a_i,b_i)}\circ(\pi^i_{a,b}\otimes 1_E)=M^{\mathcal{A}_i}_{a_i,a_i,b_i}\circ (1_{\mathcal{A}_i(a_i,b_i)}\otimes j^{\mathcal{A}_i}_{a_i})\circ(\pi^i_{a,b}\otimes 1_E)=\pi^i_{a,b}\circ M^{\mathcal{L}}_{a,a,b}\circ (1_{\mathcal{L}(a,b)}\otimes j^\mathcal{L}_a)$, for every $i\in \mathbb{I}$,

and so, by the uniqueness due to the limiting cone $\pi_{a,b}:\Delta\mathcal{L}(a,b)\rightarrow A(a,b)$, the right unit axiom (5.2.1) $\rho_{\mathcal{L}(a,b)}=M^\mathcal{L}_{a,a,b}\circ (1_{\mathcal{L}(a,b)}\otimes j^\mathcal{L}_a)$ holds;\\

consider the commutative diagram in $\mathbb{C}$

\begin{picture}(370,70)

\put(-20,40){$E\otimes\mathcal{L}(a,b)$}\put(55,43){\vector(1,0){60}}
\put(55,49){$j^\mathcal{L}_b\otimes 1_{\mathcal{L}(a,b)} $}
\put(130,40){$\mathcal{L}(b,b)\otimes\mathcal{L}(a,b)$}\put(230,43){\vector(1,0){60}}
\put(237,49){$M^\mathcal{L}_{a,b,b}$}\put(293,40){$\mathcal{L}(a,b)$}

\put(20,33){\vector(0,-1){20}}\put(-28,20){$1_E\otimes\pi^i_{a,b}$}
\put(175,20){$\pi^i_{b,b}\otimes\pi^i_{a,b}$}\put(173,33){\vector(0,-1){20}}
\put(305,20){$\pi^i_{a,b}$}\put(303,33){\vector(0,-1){20}}

\put(-20,0){$E\otimes\mathcal{A}_i(a_i,b_i)$}\put(55,3){\vector(1,0){60}}
\put(52,9){$j^{\mathcal{A}_i}_{b_i}\otimes 1_{\mathcal{A}_i(a_i,b_i)}$}
\put(120,0){$\mathcal{A}_i(b_i,b_i)\otimes\mathcal{A}_i(a_i,b_i)$}
\put(230,3){\vector(1,0){60}}\put(237,9){$M^{\mathcal{A}_i}_{a_i,b_i,b_i}$}
\put(293,0){$\mathcal{A}_i(a_i,b_i)$,}

\end{picture}\\

as $\rho_{\mathcal{A}_i(a_i,b_i)}\circ\gamma_{E,\mathcal{A}_i(a_i,b_i)}=
M^{\mathcal{A}_i}_{a_i,b_i,b_i}\circ
(j^{\mathcal{A}_i}_{b_i}\otimes 1_{\mathcal{A}_i(a_i,b_i)})$, for every $i\in \mathbb{I}$,

then

$\pi^i_{a,b}\circ\rho_{\mathcal{L}(a,b)}\circ\gamma_{E,\mathcal{L}(a,b)}=
\rho_{\mathcal{A}_i(a_i,b_i)}\circ\gamma_{E,\mathcal{A}_i(a_i,b_i)}\circ(1_E\otimes\pi^i_{a,b})=
M^{\mathcal{A}_i}_{a_i,b_i,b_i}\circ
(j^{\mathcal{A}_i}_{b_i}\otimes 1_{\mathcal{A}_i(a_i,b_i)})\circ (1_E\otimes\pi^i_{a,b})=
\pi^i_{a,b}\circ M^{\mathcal{L}}_{a,b,b}\circ (j^\mathcal{L}_b\otimes 1_{\mathcal{L}(a,b)})$, for every $i\in \mathbb{I}$,

and so, by the uniqueness due to the limiting cone $\pi_{a,b}:\Delta\mathcal{L}(a,b)\rightarrow A(a,b)$, the left unit axiom (5.2.2) $\rho_{\mathcal{L}(a,b)}\circ\gamma_{E,\mathcal{L}(a,b)}=M^\mathcal{L}_{a,b,b}
\circ (j^\mathcal{L}_b\otimes 1_{\mathcal{L}(a,b)})$ holds.\\

It is also obvious that (6) $\pi^i$ is a $\mathbb{C}$-functor, by its definition,
\begin{itemize}
\item $ob\pi^i:ob(\mathcal{L})\rightarrow ob(\mathcal{A}_i)$, $a=(a_i)_{i\in \mathbb{I}}\mapsto a_i$,
\item $\pi^i_{a,b}:\mathcal{L}(a,b)\rightarrow \mathcal{A}_i(a_i,b_i)$,
\end{itemize}
and by the definitions of $M^\mathcal{L}_{a,b,c}$ and $j^\mathcal{L}_a$, for every $a,b,c\in ob(\mathcal{L})$. In fact, $\pi^i$ is compatible with composition by definition of $M^\mathcal{L}_{a,b,c}$, and $\pi^i$ is compatible with the identities by definition of $j^\mathcal{L}_a$, for every $a,b,c\in ob(\mathcal{L})$.\\

It remains to show that $\pi :\Delta\mathcal{L}\rightarrow A$ is a universal cone. Let $\lambda:\Delta\mathcal{A}\rightarrow A$ be another cone,
from any $\mathcal{A}$($\in\mathbb{C}$-$Cat$) into $A:\mathbb{I}\rightarrow \mathbb{C}$-$Cat$.

The functor $ob:\mathbb{C}$-$Cat\rightarrow Set$ takes the cone $\pi$ into a limiting cone $$ob\pi :\Delta ob(\mathcal{L})\rightarrow ob\circ A$$ in the category $Set$ of all sets, and it is assumed that there is a limiting cone $$\pi_{a,b}:\Delta\mathcal{L}(a,b)\rightarrow A_{a,b}$$ in $\mathbb{C}$, for each ordered pair $(a,b)\in ob(\mathcal{L})\times ob(\mathcal{L})$.

The universality of these cones, in $Set$ and $\mathbb{C}$ respectively, provides a unique function $obL:ob(\mathcal{A})\rightarrow ob(\mathcal{L})$ and unique morphisms\footnote{The notation is going to be simplified as mentioned before; for instance, $L(x)$ means in fact $ob(L(x))$.} $L_{x,y}:\mathcal{A}(x,y)\rightarrow \mathcal{L}(L(x),L(y))$ for each pair of objects $x,y\in ob(\mathcal{A})$. Hence, if $L$ is a $\mathbb{C}$-functor it must be unique, and $L=\underleftarrow{Lim}A$ in $\mathbb{C}$-$Cat$.\\

$L$ is compatible with the composition, that is, $$L_{x,z}\circ M^\mathcal{A}_{x,y,z}=M^\mathcal{L}_{L(x),L(y),L(z)}\circ (L_{y,z}\otimes L_{x,y})$$ for every $x,y,z\in ob(\mathcal{A})$, because

$\pi^i_{L(x),L(z)}\circ L_{x,z}\circ M^\mathcal{A}_{x,y,z}=
\lambda^i_{x,z}\circ M^\mathcal{A}_{x,y,z}\\
=M^{\mathcal{A}_i}_{L(x)_i,L(y)_i,L(z)_i}\circ(\lambda^i_{y,z}\otimes\lambda^i_{x,y})$, since $\lambda^i$ is a $\mathbb{C}$-functor\\
$=M^{\mathcal{A}_i}_{L(x)_i,L(y)_i,L(z)_i}\circ(\pi^i_{L(y),L(z)}\otimes\pi^i_{L(x),L(y)})\circ(L_{y,z}\otimes L_{x,y})$, since $\otimes$ is a bifunctor\\
$=\pi^i_{L(x),L(z)}\circ M^\mathcal{L}_{L(x),L(y),L(z)}\circ (L_{y,z}\otimes L_{x,y})$, since (6) $\pi^i$ is a $\mathbb{C}$-functor, for every $i\in \mathbb{I}$.\\

$L$ is also compatible with the identities, that is, $$L_{x,x}\circ j^\mathcal{A}_x=j^\mathcal{L}_{L(x)},$$ for every $x\in ob(\mathcal{A})$, because

$\pi^i_{L(x),L(x)}\circ L_{x,x}\circ j^\mathcal{A}_x=
\lambda^i_{x,x}\circ j^\mathcal{A}_x=j^{\mathcal{A}_i}_{L(x)_i}$, since $\lambda^i$ is a $\mathbb{C}$-functor\\
$=\pi^i_{L(x),L(x)}\circ j^\mathcal{L}_{L(x)}$, since (6) $\pi^i$ is a $\mathbb{C}$-functor, for every $i\in \mathbb{I}$.\\

The proof that limits in $\mathbb{C}$-$Cat$ may be calculated ``hom-componentwise" in $\mathbb{C}$ is done.
\end{proof}

\begin{remark}\label{remark:cartesian monoidal}
It is straightforward to confirm that, if $(\mathbb{C},\otimes,E,\alpha,\gamma,\rho)$ is a cartesian monoidal category then its derived monoidal category $\mathbb{C}$-$Cat=(\mathbb{C}$-$Cat,\bigcirc,\mathfrak{E},\wedge,\Gamma,\mho)$ is also cartesian. Just check in this section how to obtain the cartesian products in a $\mathcal{V}$-category, and compare with the definition of the derived monoidal structure in section \ref{section:derivedMonoidal}, provided $\otimes=\times$.
\end{remark}

\section{The base monoidal reflection}\label{section:derivedReflection}

The results in this paper are to be applied in categorical Galois theory (cf.\ \cite{G. Janelidze}) to admissible\footnote{Also called semi-left-exact as introduced in \cite{CHK:fact} (cf.\ \cite{CJKP:stab}).} reflections of full subcategories. With this in mind, a special case of the base monoidal adjunction will be considered now.

If $\mathbb{X}$ is reflective in $\mathbb{C}$, that is, $\mathbb{X}$ is a subcategory of $\mathbb{C}$, $G$ is the inclusion functor and $F$ is the reflector, it is obvious that $E=I$, and that the bifunctor $\lozenge$ and the natural transformations $\mathfrak{a}$, $t$ and $r$ are just the respective restrictions of $\otimes$, $\alpha$, $\gamma$ and $\rho$.

It is well known that, provided the inclusion $G$ is also a full functor then every counit morphism $\varepsilon_X:FG(X)\rightarrow X$ is an isomorphism, $X\in\mathbb{X}$ (cf.\ \cite[\S IV.3]{SM:cat}). Furthermore, if $\mathbb{X}$ is a replete subcategory of $\mathbb{C}$, that is, $\mathbb{X}$ contains any object of $\mathbb{C}$ which is isomorphic to some other object of $\mathbb{X}$ (cf.\ \cite[\S 3.1]{CJKP:stab}), then this allows to choose the unit $\eta:1_\mathbb{C}\rightarrow GF$ so that the counit is the identity $\varepsilon:FG=1_\mathbb{X}$ ($G\varepsilon_X\circ \eta_{G(X)}=1_{G(X)}\Leftrightarrow \eta_{G(X)}=\varepsilon^{-1}_X$, provided $G$ is an inclusion; cf.\ Theorem 2(ii) in \cite[\S IV.1]{SM:cat}).\\

\begin{definition}\label{def:base monoidal reflection}
An adjunction $(A)$ as in section \ref{sec-base monoidal adjunction} and satisfying $(B)$ will be called \emph{base monoidal reflection} if $\mathbb{X}$ is a full replete subcategory of $\mathbb{C}$ and $G:\mathbb{X}\subseteq \mathbb{C}$ is the inclusion functor.
\end{definition}

In the following Proposition \ref{proposition:derivedReflection}, it is shown that from a base monoidal reflection another base monoidal reflection is derived, using the process introduced in section \ref{sec:derived}.

\begin{proposition}\label{proposition:derivedReflection}

Consider a base monoidal reflection as in Definition \ref{def:base monoidal reflection}, determined by the (symmetric) monoidal category $(\mathbb{C},\otimes,E,\alpha,\gamma,\rho)$ and the reflection $(F,G,\eta,\varepsilon):\mathbb{C}\rightarrow \mathbb{X}$. Then, there is a derived adjunction \begin{center}$(\mathbb{F},\mathbb{G},\Theta,\Upsilon):\mathbb{C}$-$Cat\rightarrow\mathbb{X}$-$Cat$,\end{center} such that $\mathbb{G}$ is the inclusion functor and $\mathbb{X}$-$Cat$ is a full replete subcategory of $\mathbb{C}$-$Cat$.

\end{proposition}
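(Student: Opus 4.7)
The plan is to unpack the construction of section \ref{sec:derived} in the special case where $G:\mathbb{X}\hookrightarrow\mathbb{C}$ is the inclusion of a full replete subcategory, with $\eta$ chosen so that $\varepsilon=1_\mathbb{X}$ (as arranged in the discussion preceding Definition \ref{def:base monoidal reflection}). All three assertions of the proposition will then follow directly from the defining formulas for $\mathbb{F}$, $\mathbb{G}$, $\Theta$, $\Upsilon$, combined with Proposition \ref{proposition:IsosInC-Cat} and the full/replete hypothesis on $\mathbb{X}$.

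First I would verify that $\mathbb{G}$ is literally an inclusion. Symmetrically to the formulas for $\mathbb{F}$ listed in (2) of section \ref{sec:derived}, an $\mathbb{X}$-category $\mathcal{X}$ is sent to the $\mathbb{C}$-category $\mathbb{G}(\mathcal{X})$ with $ob\mathbb{G}(\mathcal{X})=ob(\mathcal{X})$, $\mathbb{G}(\mathcal{X})(x,y)=G(\mathcal{X}(x,y))=\mathcal{X}(x,y)$, $M^{\mathbb{G}(\mathcal{X})}_{x,y,z}=GM^\mathcal{X}_{x,y,z}=M^\mathcal{X}_{x,y,z}$, and $j^{\mathbb{G}(\mathcal{X})}_x=Gj^\mathcal{X}_x=j^\mathcal{X}_x$; analogously, any $\mathbb{X}$-functor $T$ is carried to a $\mathbb{C}$-functor with the same underlying object function and the same hom-components. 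Hence $\mathbb{G}$ forgets nothing and identifies $\mathbb{X}$-$Cat$ with a subcategory of $\mathbb{C}$-$Cat$.

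Next, for fullness: a $\mathbb{C}$-functor $T:\mathbb{G}(\mathcal{X})\to\mathbb{G}(\mathcal{Y})$ between $\mathbb{X}$-categories has hom-components $T_{x,y}:\mathcal{X}(x,y)\to\mathcal{Y}(T(x),T(y))$ which are morphisms in $\mathbb{C}$ between objects of $\mathbb{X}$; fullness of $\mathbb{X}\subseteq\mathbb{C}$ puts them in $\mathbb{X}$, and since $\lozenge$ is the restriction of $\otimes$, the compatibility diagrams for $T$ live in $\mathbb{X}$, making $T$ an $\mathbb{X}$-functor. For repleteness: if $T:\mathcal{A}\to\mathbb{G}(\mathcal{X})$ is an isomorphism in $\mathbb{C}$-$Cat$, then by Proposition \ref{proposition:IsosInC-Cat} each $T_{a,b}$ is an isomorphism in $\mathbb{C}$ with codomain in $\mathbb{X}$, so repleteness of $\mathbb{X}$ in $\mathbb{C}$ yields $\mathcal{A}(a,b)\in\mathbb{X}$; since $G$ preserves $\otimes$, also $\mathcal{A}(b,c)\otimes\mathcal{A}(a,b)\in\mathbb{X}$ and $E=I\in\mathbb{X}$, so fullness of $\mathbb{X}$ places the structural morphisms $M^\mathcal{A}_{a,b,c}$ and $j^\mathcal{A}_a$ in $\mathbb{X}$, exhibiting $\mathcal{A}$ as $\mathbb{G}$ of an $\mathbb{X}$-category.

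Finally, to confirm that the derived adjunction is genuinely a reflection onto this subcategory, it suffices to observe that $(\Upsilon_\mathcal{X})_{x,y}=\varepsilon_{\mathcal{X}(x,y)}=1_{\mathcal{X}(x,y)}$ (since $\varepsilon=1_\mathbb{X}$ by the chosen normalisation), so $\Upsilon_\mathcal{X}=1_\mathcal{X}$ for every $\mathcal{X}\in\mathbb{X}$-$Cat$. I do not anticipate any real obstacle; the only mildly delicate point is propagating repleteness from hom-objects to the composition and identity morphisms, which is handled by the remark that fullness of $\mathbb{X}$ in $\mathbb{C}$, together with closure of $\mathbb{X}$ under $\otimes$ on objects (which holds because $G$ preserves $\otimes$), forces every structural morphism of a candidate $\mathbb{C}$-category to land in $\mathbb{X}$.
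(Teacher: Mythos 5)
Your treatment of the three subcategory assertions matches the paper's: $\mathbb{G}$ acts as the identity on objects and hom-data, fullness of $\mathbb{X}$ in $\mathbb{C}$ makes any $\mathbb{C}$-functor between $\mathbb{X}$-categories an $\mathbb{X}$-functor (a point the paper leaves implicit, which you spell out), and repleteness follows from Proposition \ref{proposition:IsosInC-Cat} together with repleteness of $\mathbb{X}$ and fullness of $G$ to place $M^\mathcal{A}_{a,b,c}$ and $j^\mathcal{A}_a$ in $\mathbb{X}$; the observation that $\Upsilon=1$ is also as in the paper.

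However, there is a genuine gap at the very first step: you invoke ``the construction of section \ref{sec:derived}'' as if it automatically applies, but that construction is carried out only for a \emph{base monoidal adjunction}, i.e.\ under the assumptions $(A)$, $(B)$, $(C)$ and $(D)$ of section \ref{sec-base monoidal adjunction}. Definition \ref{def:base monoidal reflection} assumes only $(A)$ and $(B)$ (plus full, replete, $G$ an inclusion), so the existence of the derived adjunction — which is part of the statement — is not available until you verify $(C)$ ($\eta_A\otimes\eta_B=\eta_{A\otimes B}$) and $(D)$ ($\eta_E=1_E$). These are not cosmetic: the proof in section \ref{sec:derived} that $\Theta_\mathcal{A}$ is a $\mathbb{C}$-functor and the universality arguments use $(C)$ and $(D)$ explicitly. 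The paper's proof opens precisely with this check: since $\varepsilon:FG=1_\mathbb{X}$ and $\lozenge$, $I$ are the restrictions of $\otimes$, $E$, the counit-side identities $\varepsilon_X\lozenge\varepsilon_Y=\varepsilon_{X\lozenge Y}$ and $\varepsilon_I=1_I$ hold trivially (the first reduces to $1_X\otimes 1_Y=1_{X\otimes Y}$, true because $\otimes$ is a bifunctor), and Proposition \ref{proposition:duality eta epsilon} then converts them into $(C)$ and $(D)$. Your normalisation remark ``$\varepsilon=1_\mathbb{X}$'' contains the needed ingredient, but the deduction of $(C)$ and $(D)$ via Proposition \ref{proposition:duality eta epsilon} must be stated; with that paragraph added, your argument is complete and coincides with the paper's.
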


\begin{proof}
In order to check that there is a derived adjunction, we have to show that both conditions $(C)$ and $(D)$ (cf.\ section \ref{sec-base monoidal adjunction}) hold for the base adjunction:

\noindent $(C)$ holds because, for every $X,Y\in\mathbb{X}$,

$\varepsilon_X\lozenge\varepsilon_Y=\varepsilon_{X\lozenge Y}\Leftrightarrow \varepsilon_X\otimes\varepsilon_Y=\varepsilon_{X\otimes Y}$, since $\lozenge$ is the restriction of $\otimes$

$\Leftrightarrow 1_X\otimes 1_Y=1_{X\otimes Y}$, since $\varepsilon :FG=1_\mathbb{X}$,

\noindent which is true simply because $\otimes$ is a bifunctor, implying condition $(C)$ by the duality given in Proposition \ref{proposition:duality eta epsilon}$(i)$;

\noindent $(D)$ holds since $\eta_E=1_E\Leftrightarrow \varepsilon_I=1_I$, by Proposition \ref{proposition:duality eta epsilon}$(ii)$, and $\varepsilon :FG=1_\mathbb{X}$.

As $ob\Upsilon_\mathcal{X}=1_{ob(\mathcal{X})}$ and $(\Upsilon_\mathcal{X})_{a,b}=\varepsilon_{\mathcal{X}(a,b)}=1_{\mathcal{X}(a,b)}$, for every $\mathcal{X}\in\mathbb{X}$-$Cat$ and for every pair $a,b\in ob(\mathcal{X})$, it follows that $\Upsilon :\mathbb{FG}=1_{\mathbb{X}}$-$_{Cat}$ is the identity.\\

It is also obvious that $\mathbb{G}$ is the inclusion functor:

$ob\mathbb{G}(\mathcal{X})=ob(\mathcal{X})$, $\mathbb{G}(\mathcal{X})(a,b)=G(\mathcal{X}(a,b))=\mathcal{X}(a,b)$, $j^{\mathbb{G}(\mathcal{X})}_a=Gj^\mathcal{X}_a=j^\mathcal{X}_a$, and $M^{\mathbb{G}(\mathcal{X})}_{a,b,c}=GM^\mathcal{X}_{a,b,c}=M^\mathcal{X}_{a,b,c}$, for every $\mathcal{X}\in\mathbb{X}$-$Cat$ and $a,b,c\in ob(\mathcal{X})$;

for every $\mathbb{X}$-functor $T:\mathcal{X}\rightarrow \mathcal{Y}$, $ob\mathbb{G}T=obT$ and $(\mathbb{G}T)_{x,y}=GT_{x,y}=T_{x,y}:\mathcal{X}(x,y)\rightarrow \mathcal{Y}(T(x),T(y))$, for every pair $x,y\in\mathcal{X}$;

so that, if $\mathbb{G}T=\mathbb{G}S$ then necessarily $T=S$.\\

It remains to show that $\mathbb{X}$-$Cat$ is a replete subcategory of $\mathbb{C}$-$Cat$, which follows from the characterization of isomorphisms in $\mathbb{C}$-$Cat$ (cf.\ Proposition \ref{proposition:IsosInC-Cat} in section \ref{sec-Vcats}):

 if $\mathcal{A}\in\mathbb{C}$-$Cat$ is isomorphic to $\mathcal{X}\in\mathbb{X}$-$Cat$, then there is a $\mathbb{C}$-functor $T:\mathcal{A}\rightarrow \mathcal{X}$ such that $T_{a,b}:A(a,b)\rightarrow X(T(a),T(b))$ is an isomorphism in $\mathbb{C}$, for every pair $a,b\in ob(\mathcal{A})$;

 as $\mathbb{X}$ is replete in $\mathbb{C}$, this means that $A(a,b)\in \mathbb{X}$, for every pair $a,b\in ob(\mathcal{A})$, which implies that $\mathcal{A}\in\mathbb{X}$-$Cat$ by definition of $\mathbb{X}$-$Cat$ (remark that $M^\mathcal{A}_{a,b,c}:\mathcal{A}(b,c)\otimes\mathcal{A}(a,b)=\mathcal{A}(b,c)\lozenge\mathcal{A}(a,b)\rightarrow\mathcal{A}(a,c)$ and $j^\mathcal{A}_a:E=I\rightarrow\mathcal{A}(a,a)$ are morphisms in $\mathbb{X}$, by the fullness of $G:\mathbb{X}\subseteq\mathbb{C}$).
\end{proof}

\section{Simplicity, semi-left-exactness\\ and the stable units property}\label{sec:Simp., semi-left-exact. and stable units}

In the present section we refer to a base monoidal reflection $$(F,G,\eta,\varepsilon):\mathbb{C}\rightarrow\mathbb{X}$$ as in Definition \ref{def:base monoidal reflection}, and such that $\mathbb{C}$ has pullbacks.

Remember that, by definition, in a base monoidal reflection the right adjoint is the inclusion functor of a full and replete subcategory, and the counit is the identity.

The following Proposition \ref{proposition:stableunits} states that some properties of the left adjoint $F$, concerning the preservation of certain kinds of pullback diagrams (which are relevant in Galois categorical theory), are inherited by the left adjoint $\mathbb{F}$ in the derived base monoidal reflection.

Namely, going from the weaker to the stronger property, if $F\dashv G$ is a \emph{simple reflection}, or \emph{semi-left-exact} (also called \emph{admissible}), or having \emph{stable units} in the sense of \cite{CHK:fact} (cf.\ \cite{CJKP:stab}), then $\mathbb{F}\dashv\mathbb{G}$ does have as well the respective property.

\begin{proposition}\label{proposition:stableunits}
Consider a category $\mathbb{C}$ with pullbacks, a base monoidal reflection $$(F,G,\eta,\varepsilon):\mathbb{C}\rightarrow\mathbb{X}$$ as in Definition \ref{def:base monoidal reflection}, and its derived adjunction \begin{center}$(\mathbb{F},\mathbb{G},\Theta,\Upsilon):\mathbb{C}$-$Cat\rightarrow\mathbb{X}$-$Cat$.\end{center}

Then, if $F\dashv G$ has stable units, if it is semi-left-exact or simple, so is $\mathbb{F}\dashv \mathbb{G}$ respectively.
\end{proposition}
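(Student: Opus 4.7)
The plan is to reduce each of the three preservation properties (simplicity, semi-left-exactness, stable units) for the derived reflection $\mathbb{F}\dashv\mathbb{G}$ to the corresponding property for the base reflection $F\dashv G$, by invoking the hom-componentwise computation of limits in Proposition \ref{proposition:limitsC-Cat}. Each of these properties (cf.\ \cite{CHK:fact}, \cite{CJKP:stab}) can be phrased as the requirement that $F$ preserve pullbacks of a specified class of cospans involving a unit morphism $\eta_A:A\rightarrow GF(A)$: for simplicity, one considers pullbacks of a unit along a unit; for semi-left-exactness, pullbacks of a unit along a morphism whose codomain lies in $G(\mathbb{X})$; for stable units, all pullbacks having a unit as one leg. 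I must show that $\mathbb{F}$ preserves the analogous class of pullbacks in $\mathbb{C}$-$Cat$.

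The first step is to record how $\Theta$ and $\mathbb{F}$ interact with the hom-componentwise description of pullbacks. By construction, $ob\Theta_\mathcal{A}=1_{ob(\mathcal{A})}$ and $(\Theta_\mathcal{A})_{a,b}=\eta_{\mathcal{A}(a,b)}$; moreover, $\mathbb{F}$ acts as the identity on sets of objects and as $F$ on each hom-object. Consider a cospan in $\mathbb{C}$-$Cat$ of the shape required by one of the three properties, with $\Theta_\mathcal{A}$ as one leg. By Proposition \ref{proposition:limitsC-Cat}, its pullback has object set equal to the pullback in $Set$ of the underlying object functions (which is trivial here, since $ob\Theta_\mathcal{A}$ is the identity), and for each pair of objects of the pullback, its hom-object is the pullback in $\mathbb{C}$ of the corresponding pair of hom-morphisms — a cospan whose relevant leg is exactly $\eta_{\mathcal{A}(a,b)}$.

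The second step is to apply $\mathbb{F}$ and compare with the pullback in $\mathbb{X}$-$Cat$ of the $\mathbb{F}$-image of the cospan. The latter is again, by Proposition \ref{proposition:limitsC-Cat} applied inside $\mathbb{X}$-$Cat$, computed hom-componentwise as pullbacks in $\mathbb{X}$ of the $F$-images of the hom-morphisms. Since $\mathbb{F}$ does not change the object set and applies $F$ to every hom-object, comparing the two descriptions reduces the preservation of the pullback in $\mathbb{C}$-$Cat$ to the preservation by $F$, hom-component by hom-component, of the relevant pullback in $\mathbb{C}$ — which is precisely the hypothesis on $F\dashv G$. For semi-left-exactness, the fullness of $G:\mathbb{X}\subseteq\mathbb{C}$ guarantees that if a leg of the cospan in $\mathbb{C}$-$Cat$ lands in an $\mathbb{X}$-$Cat$ object $\mathbb{G}(\mathcal{X})$, then each of the corresponding hom-morphisms has codomain in $\mathbb{X}$, so the hom-component cospan is in turn of the form preserved by a semi-left-exact $F$; for simplicity both legs are units, hence both hom-component legs are units $\eta$; for stable units there is no additional condition to check.

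The main obstacle is purely bookkeeping: making the matching between ``cospans of the required class in $\mathbb{C}$-$Cat$'' and ``hom-component cospans of the required class in $\mathbb{C}$'' completely explicit for each of the three properties, together with the trivial handling of the $Set$-level pullback on objects. Once this matching is stated, the three cases collapse to three applications of the respective hypothesis on $F$, and Proposition \ref{proposition:stableunits} follows.
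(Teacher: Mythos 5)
Your overall strategy is exactly the paper's: the object-level pullback is trivial because $\mathbb{F}$ and $\Theta$ act as identities on object sets, and Proposition \ref{proposition:limitsC-Cat} reduces everything to hom-components, where the hypothesis on $F\dashv G$ is applied. For the stable-units case this works as written and matches the paper's argument.

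There is, however, a genuine problem with your descriptions of the other two properties, and as written the plan would prove the wrong statements. For semi-left-exactness you say one pulls back a unit ``along a morphism whose codomain lies in $G(\mathbb{X})$''; but the codomain of any morphism pulled back along $\eta_A$ is $GF(A)$, which is always in $G(\mathbb{X})$, so your class collapses to the stable-units class. The correct condition is on the \emph{domain}: the other leg is $\mathbb{G}S:\mathbb{G}(\mathcal{X})\rightarrow\mathbb{GF}(\mathcal{A})$ with $\mathcal{X}\in\mathbb{X}$-$Cat$, whose hom-components $GS_{a,b}$ have domain $\mathcal{X}(a,b)\in\mathbb{X}$ (this is immediate from the definition of $\mathbb{X}$-$Cat$; fullness of $G$ is not what is needed). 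More seriously, simplicity is \emph{not} about pullbacks of a unit along a unit: it asserts that for every $T:\mathcal{A}\rightarrow\mathcal{B}$ the comparison morphism $W:\mathcal{A}\rightarrow\mathcal{L}$ into the pullback of $\Theta_{\mathcal{B}}$ along $\mathbb{GF}T$ (one leg a unit, the other a morphism between objects in the image of $\mathbb{GF}$, which is generally not a unit) is inverted by $\mathbb{F}$. The paper's proof of this case accordingly tracks the induced morphism $W$ and shows each $FW_{a_1,b_1}$ is an isomorphism using simplicity of $F$ on the hom-component comparison into the pullback of $\eta_{\mathcal{B}(a_2,b_2)}$ along $GFT_{a_1,b_1}$. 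Your hom-componentwise reduction would still go through once the correct cospans (and, for simplicity, the comparison morphism) are substituted, but the case as you have set it up does not establish simplicity of $\mathbb{F}\dashv\mathbb{G}$.
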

\begin{proof}
Notice first that $\mathbb{F}\dashv \mathbb{G}$ is also a base monoidal reflection (cf.\ Proposition \ref{proposition:derivedReflection}), such that $\mathbb{C}$-$Cat$ has pullbacks (cf.\ Proposition \ref{proposition:limitsC-Cat}).\\

1. (stable units) Consider the diagram in $\mathbb{X}$-$Cat$, which is the image of a pullback diagram in $\mathbb{C}$-$Cat$,

\begin{picture}(300,80)
\put (40,0){$\mathbb{F}(\mathcal{A}_1)$}\put (40,50){$\mathbb{F}(\mathcal{L})$}
\put (145,0){$\mathbb{F}(\mathcal{A}_3)$\ ,}\put (145,50){$\mathbb{F}(\mathcal{A}_2)$}

\put(30,25){$\mathbb{F}U$}\put(160,25){$\mathbb{F}S$}\put(200,25){(9.1)}
\put(95,10){$\mathbb{F}T$}\put(95,60){$\mathbb{F}V$}

\put(75,3){\vector(1,0){58}}\put(75,55){\vector(1,0){58}}
\put(50,45){\vector(0,-1){33}}\put (155,45){\vector(0,-1){33}}
\end{picture}\\

\noindent and such that $\mathcal{A}_3\in\mathbb{X}$-$Cat$. We have to prove that this is also a pullback diagram in $\mathbb{X}$-$Cat$, provided that the reflection $F\dashv G$ has stable units (that is, if $F$ preserves any pullback diagram in $\mathbb{C}$ in which the right down object is in the subcategory $\mathbb{X}$).

As $\mathbb{F}$ does not change the sets of objects, the image by $ob:\mathbb{X}$-$Cat\rightarrow Set$ of diagram (9.1) is the same as the image by $ob:\mathbb{C}$-$Cat\rightarrow Set$ of the diagram prior to the application of $\mathbb{F}$, and therefore it is a pullback in $Set$.

Now, in order to confirm that diagram (9.1) is a pullback in $\mathbb{X}$-$Cat$, we need only to prove that the following square (9.2) is a pullback, according to Proposition \ref{proposition:limitsC-Cat} and the definition of $\mathbb{F}$, for every pair $a,b$ of objects in $\mathcal{L}$,

\begin{picture}(230,80)
\put (5,0){$F(\mathcal{A}_1(a_1,b_1))$}\put (20,50){$F(\mathcal{L}(a,b))$}
\put (140,0){$F(\mathcal{A}_3(a_3,b_3))$\ ;}\put (140,50){$F(\mathcal{A}_2(a_2,b_2))$}

\put(20,25){$FU_{a,b}$}\put(170,25){$FS_{a_2,b_2}$}\put(220,25){(9.2)}
\put(90,10){$FT_{a_1,b_1}$}\put(90,60){$FV_{a,b}$}

\put(75,3){\vector(1,0){58}}\put(75,55){\vector(1,0){58}}
\put(50,45){\vector(0,-1){33}}\put (165,45){\vector(0,-1){33}}
\end{picture}\\

\noindent which is true since $\mathcal{A}_3\in\mathbb{X}$-$Cat$ implies that $\mathcal{A}_3(a_3,b_3)\in\mathbb{X}$, and diagram (9.2) is an image by $F$ of a pullback diagram in $\mathbb{C}$, being $F$ the left-adjoint of a reflection into $\mathbb{C}$ with stable units.\\

2. (semi-left-exact) Consider the pullback diagram in $\mathbb{C}$-$Cat$

\begin{picture}(210,80)
\put (57,0){$\mathcal{A}$}\put (60,50){$\mathcal{L}$}
\put (145,0){$\mathbb{GF}(\mathcal{A})$\ ,}\put(145,50){$\mathbb{G}(\mathcal{X})$}

\put(50,25){$U$}\put(160,25){$\mathbb{G}S$}
\put(100,10){$\Theta_{\mathcal{A}}$}\put(100,60){$V$}

\put(75,3){\vector(1,0){62}}\put(75,55){\vector(1,0){62}}
\put(65,45){\vector(0,-1){33}}\put (155,45){\vector(0,-1){33}}
\end{picture}\\

\noindent where $\Theta_{\mathcal{A}}$ is a unit morphism in the derived reflection.\\

We have to show that $\mathbb{F}V:\mathbb{F}(\mathcal{L})\rightarrow \mathbb{FG}(\mathcal{X})=\mathcal{X}$ is an isomorphism in $\mathbb{X}$-$Cat$, provided the reflection $F\dashv G$ is semi-left-exact (that is, $F$ preserves the pullback diagrams in $\mathbb{C}$ in which the down arrow is a unit morphism in the base reflection, and the right up object is in $\mathbb{X}$).\\

As the diagram is a pullback in $\mathbb{C}$-$Cat$, $obV$ is a bijection since $ob\Theta_{\mathcal{A}}$ is the identity. Hence, $ob\mathbb{F}V:ob(\mathbb{F}(\mathcal{L}))\rightarrow ob(\mathcal{X})$ is a bijection as well, because $\mathbb{F}$ does not change the sets of objects. It remains to show, that the following diagram is a pullback in $\mathbb{X}$, for every $a,b\in ob(\mathcal{L})$,

\begin{picture}(100,80)
\put (5,0){$F(\mathcal{A}(a_1,b_1))$}\put (20,50){$F(\mathcal{L}(a,b))$}
\put (140,0){$F(\mathcal{A}(a_1,b_1))$\ ,}\put (140,50){$F(\mathcal{X}(a,b))$}

\put(20,25){$FU_{a,b}$}\put(170,25){$FS_{a,b}$}
\put(90,10){$F\eta_{\mathcal{A}(a_1,b_1)}$}\put(90,60){$FV_{a,b}$}

\put(75,3){\vector(1,0){58}}\put(75,55){\vector(1,0){58}}
\put(50,45){\vector(0,-1){33}}\put (165,45){\vector(0,-1){33}}
\end{picture}\\

\noindent which is true since: it is the image by $F$ of a pullback diagram in $\mathbb{C}$ (according to Proposition \ref{proposition:limitsC-Cat}), in which the down arrow is a unit morphism in the reflection $F\dashv G$ and the right up object is in the subcategory $\mathbb{X}$; hence, as by hypothesis $F\dashv G$ is semi-left-exact, $FV_{a,b}$ is an isomorphism in $\mathbb{X}$ as required.\\

3. (simple) Consider the following diagram in $\mathbb{C}$-$Cat$

\begin{picture}(220,130)
\put(100,0){$\mathcal{B}$}\put(100,50){$\mathcal{L}$}
\put(200,0){$\mathbb{GF}(\mathcal{B})$\hspace{10pt,}}\put(200,50){$\mathbb{GF}(\mathcal{A})$}
\put(108,25){$U$} \put(218,25){$\mathbb{GF}T$} \put(150,8){$\Theta_{\mathcal{B}}$}
\put(150,58){$V$}
\put(115,3){\vector(1,0){80}}\put(115,53){\vector(1,0){80}}
\put(104,45){\vector(0,-1){33}}\put(215,45){\vector(0,-1){33}}
\put(45,105){$\mathcal{A}$}\put(60,50){$T$}\put(121,88){$\Theta_{\mathcal{A}}$}
\put(81,80){$W$}
\put(58,100){\vector(1,-1){40}}\put(55,99){\vector(1,-2){44}}
\put(60,103){\vector(3,-1){135}}
\end{picture}\\

\noindent where $\Theta_{\mathcal{A}}$ and $\Theta_{\mathcal{B}}$ are unit morphisms for the derived reflection $\mathbb{F}\dashv \mathbb{G}$, the square is a pullback in $\mathbb{C}$-$Cat$ and $W$ is the $\mathbb{C}$-functor determined by the pair $(T,\Theta_{\mathcal{A}})$. We have to show that $\mathbb{F}W:\mathbb{F}(\mathcal{A})\rightarrow\mathbb{F}(\mathcal{L})$ is an isomorphism in $\mathbb{X}$-$Cat$, provided the base monoidal reflection $F\dashv G$ is simple.

Remark that $ob\mathbb{F}V\circ ob\mathbb{F}W=ob\mathbb{F}\Theta_{\mathcal{A}}\Leftrightarrow obV\circ obW=1_{ob(\mathcal{A})}$ and so $obW$ is a bijection, since $obV$ is a bijection because $V$ is the pullback of $\Theta_{\mathcal{B}}$ along $\mathbb{GF}T$.

\begin{picture}(300,130)
\put(85,0){$\mathcal{B}(a_2,b_2)$}\put(90,50){$\mathcal{L}(a,b)$}
\put(200,0){$GF(\mathcal{B}(a_2,b_2))$\hspace{10pt,}}\put(200,50){$GF(\mathcal{A}(a_1,b_1))$}
\put(108,25){$U_{a,b}$}\put(218,25){$GFT_{a_1,b_1}$}\put(280,25){(9.3)}
\put(150,8){$\eta_{\mathcal{B}(a_2,b_2)}$}\put(150,58){$V_{a,b}$}
\put(140,3){\vector(1,0){55}}\put(130,53){\vector(1,0){65}}
\put(104,45){\vector(0,-1){33}}\put(215,45){\vector(0,-1){33}}
\put(45,105){$\mathcal{A}(a_1,b_1)$}\put(50,50){$T_{a_1,b_1}$}\put(121,88){$\eta_{\mathcal{A}(a_1,b_1)}$}
\put(81,80){$W_{a_1,b_1}$}
\put(58,100){\vector(1,-1){38}}\put(55,99){\vector(1,-2){43}}
\put(65,100){\vector(3,-1){130}}
\end{picture}\\

It remains to show, in the diagram (9.3) immediately above, that the image by $F$ of $W_{a_1,b_1}$ is an isomorphism in $\mathbb{X}$, for every $a,b\in ob(\mathcal{L})$; which is so since $F\dashv G$ is a simple reflection to start with.
\end{proof}


\section{The category of all $n$-categories}\label{sec:nCat}

Consider the category $nCat$, with objects all $n$-categories and whose morphisms are the (strict) $n$-functors (see \cite[\S XII.5]{SM:cat}). Its definition is going to be stated in a way that suits our purposes.\footnote{In this and the following sections, some of the contents of \cite{X:2ml} will be repeated (and generalized), so that the article is self-contained, making the reader's task easier.}\\

First, consider the category $\mathbb{P}$ generated by the following \emph{precategory diagram},\\
\begin{picture}(200,40)(0,0)
\put(60,0){$P_2$}
\put(80,20){\vector(1,0){40}}\put(80,5){\vector(1,0){40}}\put(80,-10){\vector(1,0){40}}
\put(95,-5){$r$}\put(95,8){$m$}\put(95,25){$q$}
\put(125,0){$P_1$}
\put(140,20){\vector(1,0){40}}\put(180,5){\vector(-1,0){40}}\put(140,-10){\vector(1,0){40}}
\put(155,-5){$c$}\put(155,8){$e$}\put(155,25){$d$}
\put(185,0){$P_0$\hspace{30pt} (10.1)}
\end{picture}\\

\noindent in which

$d\circ e=1_{P_0}=c\circ e$, $d\circ m =d\circ q$, $c\circ m =c\circ r$ and $c\circ q = d\circ r$,

\noindent where $1_{P_0}$ stands for the identity morphism of $P_0$ (see \cite[\S 4.1]{CJKP:stab}).

A \emph{precategory} is an object in the category of presheaves $\hat{\mathbb{P}}=Set^\mathbb{P}$, that is, any functor $P:\mathbb{P}\rightarrow Set$ to the category of sets.

If \begin{picture}(100,40)(0,0)
\put(60,0){$Q_2$}
\put(80,20){\vector(1,0){40}}\put(80,5){\vector(1,0){40}}\put(80,-10){\vector(1,0){40}}
\put(95,-5){$r'$}\put(95,8){$m'$}\put(95,25){$q'$}
\put(125,0){$Q_1$}
\put(140,20){\vector(1,0){40}}\put(180,5){\vector(-1,0){40}}\put(140,-10){\vector(1,0){40}}
\put(155,-5){$c'$}\put(155,8){$e'$}\put(155,25){$d'$}
\put(185,0){$Q_0$}
\end{picture}\\

\noindent is another precategory diagram, then a triple $(f_2,f_1,f_0)$ with $f_2:P_2\rightarrow Q_2$, $f_1:P_1\rightarrow Q_1$ and $f_0:P_0\rightarrow Q_0$, will be called a \emph{precategory morphism diagram} provided the following equations hold: $f_0\circ d=d'\circ f_1,\ f_0\circ c=c'\circ f_1,\ f_1\circ e=e'\circ f_0,\ f_1\circ q=q'\circ f_2, f_1\circ m=m'\circ f_2,\ f_1\circ r=r'\circ f_2.$

The category $Cat$ of all categories is the full subcategory of $\hat{\mathbb{P}}=Set^{\mathbb{P}}$, determined by its objects $C:\mathbb{P}\rightarrow Set$ such that the image by $C$ of the category generated by the precategory diagram $(10.1)$ is also a category, that is:

the commutative square

\begin{picture}(100,80)
\put (30,0){$C(P_1)$}\put (30,50){$C(P_2)$}\put (135,0){$C(P_0)$}\put (135,50){$C(P_1)$}
\put (30,25){$Cr$}\put (90,60){$Cq$}
\put(160,25){$Cc$}
\put(90,10){$Cd$}
\put(70,3){\vector(1,0){58}}\put(70,55){\vector(1,0){58}}
\put(50,45){\vector(0,-1){33}}\put (155,45){\vector(0,-1){33}}
\end{picture}\\

\noindent is a pullback diagram in $Set$;

the associative and unit laws hold for the operation $Cm$, that is, the following respective diagrams commute in $Set$,

\begin{picture}(100,80)
\put (30,0){$C(P_2)$}\put (0,50){$C(P_2)\times_{C(P_1)}C(P_2)$}\put (165,0){$C(P_1)$\hspace{10pt},}\put (165,50){$C(P_2)$}
\put (-5,25){$Cr\times Cm$}\put (100,60){$Cm\times Cq$}
\put(190,25){$Cm$}
\put(100,10){$Cm$}
\put(70,3){\vector(1,0){88}}\put(103,55){\vector(1,0){55}}
\put(50,45){\vector(0,-1){33}}\put (185,45){\vector(0,-1){33}}
\end{picture}

\begin{picture}(300,85)

\put (-10,50){$C(P_0)\times_{C(P_0)}C(P_1)$}\put(40,45){\vector(0,-1){30}}\put(92,55){\vector(1,0){23}}
\put(200,55){\vector(-1,0){45}}

\put(20,0){$C(P_1)$}\put (22,25){$pr_2$}
\put(55,3){\vector(1,0){60}}
\put(75,65){$Ce\times1_{C(P_1)}$}\put(60,10){$1_{C(P_1)}$}

\put (120,0){$C(P_1)$}\put (120,50){$C(P_2)$}\put (230,0){$C(P_1)$\hspace{10pt.}}\put (205,50){$C(P_1)\times_{C(P_0)}C(P_0)$}

\put (145,25){$Cm$}\put (155,65){$1_{C(P_1)}\times Ce$}\put(250,25){$pr_1$}
\put(180,10){$1_{C(P_1)}$}

\put(225,3){\vector(-1,0){70}}
\put(140,45){\vector(0,-1){30}}\put (245,45){\vector(0,-1){30}}
\end{picture}\\

Secondly, consider the categories $2\mathbb{P}_{kji}$ generated by the following \emph{2-precategory diagrams} (see \cite[\S 2]{X:2ml}), for integers $i$, $j$ and $k$ such that $0\leq i<j<k$,\footnote{These arbitrary integers will only become relevant in the definition of an $n$-category as a presheaf, which follows at the end of this section.}

\begin{picture}(200,105)

\put(55,70){$P_{kji}$}
\put(105,90){\vector(1,0){70}}\put(105,75){\vector(1,0){70}}\put(105,60){\vector(1,0){70}}
\put(120,64){$r^{ki}\times r^{ki}$}\put(120,78){$m^{ki}\times m^{ki}$}\put(120,95){$q^{ki}\times q^{ki}$}
\put(198,70){$P_{kj}$}
\put(235,90){\vector(1,0){70}}\put(305,75){\vector(-1,0){70}}\put(235,60){\vector(1,0){70}}
\put(250,64){$c^{ki}\times c^{ki}$}\put(250,78){$e^{ki}\times e^{ki}$}\put(250,95){$d^{ki}\times d^{ki}$}
\put(310,70){$P_i$}

\put (-12,35){$r^{kj}\times r^{kj}$}
\put (35,60){\vector(0,-1){40}}
\put (37,35){$m^{kj}\times m^{kj}$}
\put (65,60){\vector(0,-1){40}}
\put (92,35){$q^{kj}\times q^{kj}$}
\put (90,60){\vector(0,-1){40}}

\put (190,35){$r^{kj}$}
\put (185,60){\vector(0,-1){40}}
\put (211,35){$m^{kj}$}
\put (210,60){\vector(0,-1){40}}
\put (235,35){$q^{kj}$}
\put (230,60){\vector(0,-1){40}}

\put (320,35){$1_{P_i}$}\put (315,60){\vector(0,-1){40}}

\put(55,0){$P_{ki}$}
\put(105,20){\vector(1,0){70}}\put(105,5){\vector(1,0){70}}\put(105,-10){\vector(1,0){70}}
\put(128,-6){$r^{ki}$}\put(128,8){$m^{ki}$}\put(128,25){$q^{ki}$}
\put(198,0){$P_k$}
\put(235,20){\vector(1,0){70}}\put(305,5){\vector(-1,0){70}}\put(235,-10){\vector(1,0){70}}
\put(258,-6){$c^{ki}$}\put(258,8){$e^{ki}$}\put(258,25){$d^{ki}$}
\put(310,0){$P_i$}\put(330,0){$(10.2)_{kji}$}

\put (-12,-35){$c^{kj}\times c^{kj}$}
\put (35,-10){\vector(0,-1){40}}
\put (39,-35){$e^{kj}\times e^{kj}$}
\put (65,-50){\vector(0,1){40}}
\put (92,-35){$d^{kj}\times d^{kj}$}
\put (90,-10){\vector(0,-1){40}}

\put (190,-35){$c^{kj}$}
\put (185,-10){\vector(0,-1){40}}
\put (211,-35){$e^{kj}$}
\put (210,-50){\vector(0,1){40}}
\put (235,-35){$d^{kj}$}
\put (230,-10){\vector(0,-1){40}}

\put (320,-35){$1_{P_i}$}\put (315,-10){\vector(0,-1){40}}

\put(55,-70){$P_{ji}$}
\put(105,-50){\vector(1,0){70}}\put(105,-65){\vector(1,0){70}}\put(105,-80){\vector(1,0){70}}
\put(130,-76){$r^{ji}$}\put(130,-62){$m^{ji}$}\put(130,-45){$q^{ji}$}
\put(198,-70){$P_j$}
\put(235,-50){\vector(1,0){70}}\put(295,-65){\vector(-1,0){70}}\put(235,-80){\vector(1,0){70}}
\put(260,-76){$c^{ji}$}\put(260,-62){$e^{ji}$}\put(260,-45){$d^{ji}$}
\put(310,-70){$P_i$\hspace{10pt,}}

\end{picture}
\vspace{80pt}

in which:

\begin{itemize}
\item each one of the three horizontal diagrams is a precategory diagram, and will be called, respectively upwards, $P^{ji}$, $P^{ki}$ and $P^{kji}$;

\item each one of the three vertical diagrams is a precategory diagram, and will be called, respectively from the left to the right, $P^{kiji}$, $P^{kj}$ and (the trivial) $P_i$;

\item $(c^{kj}\times c^{kj},c^{kj},1_{P_i})$, $(e^{kj}\times e^{kj},e^{kj},1_{P_i})$, $(d^{kj}\times d^{kj},d^{kj},1_{P_i})$, $(r^{kj}\times r^{kj},r^{kj},1_{P_i})$, $(m^{kj}\times m^{kj},m^{kj},1_{P_i})$, $(q^{kj}\times q^{kj},q^{kj},1_{P_i})$ are all six precategory morphism diagrams (equivalently, $(q^{ki}\times q^{ki},q^{ki},q^{ji})$, $(m^{ki}\times m^{ki},m^{ki},m^{ji})$, $(r^{ki}\times r^{ki},r^{ki},r^{ji})$, $(d^{ki}\times d^{ki},d^{ki},d^{ji})$, $(e^{ki}\times e^{ki},e^{ki},e^{ji})$, $(c^{ki}\times c^{ki},c^{ki},c^{ji})$ are all six precategory morphism diagrams).

\end{itemize}

Notice that the names given to objects and morphisms in $(10.2)_{kji}$ are arbitrary, being so chosen in order to relate to the following definition of 2-category (for instance, $q^{kj}\times q^{kj}$ will denote the morphism uniquely determined by a pullback diagram).\\

The category $2Cat$ of all 2-categories is the full subcategory of $\hat{2\mathbb{P}_{kji}}=Set^{2\mathbb{P}_{kji}}$, determined by its objects $C:2\mathbb{P}_{kji}\rightarrow Set$ such that the image by $C$ of each horizontal and vertical precategory diagram in $(10.2)_{kji}$ is a category (for any integers $i$, $j$ and $k$ such that $0\leq i<j<k$).

It would be a long and trivial calculation to check that there is an isomorphism between the category of all 2-categories (in the sense of \cite[\S XII.3]{SM:cat}) and the full subcategory of $\hat{2\mathbb{P}_{kji}}$ just defined. Notice that: the requirement that the horizontal composite of two vertical identities is itself a vertical identity is encoded in diagram $(10.2)_{kji}$ in the commutativity of the square $m^{ki}\circ (e^{kj}\times e^{kj})=e^{kj}\circ m^{ji}$; the interchange law, which relates the vertical and the horizontal composites of $2$-cells, is encoded in diagram $(10.2)_{kji}$ in the commutativity of the square $m^{kj}\circ (m^{ki}\times m^{ki})=m^{ki}\circ (m^{kj}\times m^{kj})$.\\

Finally, for any integer $n\geq 2$, consider the category $n\mathbb{P}$ generated simultaneously by all the 2-precategory diagrams $(10.2)_{kji}$, for every integers $i$, $j$ and $k$ such that $0\leq i<j<k\leq n$.\footnote{The number $\frac{(n-1)n(n+1)}{6}$ of these 2-precategory diagrams can easily be obtained by using the counting principle. Remark that these $\frac{(n-1)n(n+1)}{6}$ diagrams are not independent since they share common objects and arrows, and that if $n=2$ there is only one 2-precategory diagram.}

The category $nCat$ of all $n$-categories is the full subcategory of $\hat{n\mathbb{P}}=Set^{n\mathbb{P}}$, determined by its objects $C:n\mathbb{P}\rightarrow Set$ such that the image by $C$ of each horizontal and vertical precategory diagram in $(10.1)_{kji}$ ($0\leq i<j<k\leq n$) is a category (as defined above).

Notice that, being $C:n\mathbb{P}\rightarrow Set$ a $n$-category:

\begin{itemize}
\item the image by $C$ of each 2-precategory diagram $(10.1)_{kji}$ is a 2-category, for each integer i, j and k such that $0\leq i<j<k\leq n$;

\item $C(P^{ki})$ and $C(P^{kj})$ ($j>i$) are categories corresponding respectively to the ``horizontal" structure, with $k$-cells as morphisms and $i$-cells as objects, and to the ``vertical" structure, with $k$-cells as morphisms and $j$-cells as objects;

\item the ``intersections" of the images of the 2-precategory diagrams make the 2-categories ``commute", so that $nCat$ is indeed the category of all $n$-categories (cf. \cite[\S XII.5]{SM:cat});

\item the morphisms in $C(P_{kj})$ and $C(P_{ki})$ of the categories $C(P^{kji})$ and $C(P^{kiji})$ will be called vertically and horizontally composable pairs of $k$-cells with $j$ and $i$-objects, respectively.

\end{itemize}

\section{Effective descent morphisms in $nCat$}
\label{sec:edm in nCat}

In section \ref{sec:nCat}, if the category $Set$ of sets is replaced by any category $\mathcal{C}$ with pullbacks, then we obtain the definition of $nCat(\mathcal{C})$, the category of internal $n$-categories in $\mathcal{C}$. Hence, $nCat=nCat(Set)$ for $\mathcal{C}=Set$. If $n=1$, $1Cat=1Cat(Set)=Cat$ and $1\mathbb{P}=\mathbb{P}$ is the category generated by the precategory diagram in $(10.1)$ (or, alternatively, by the precategory diagram $P^{10}$ in $(10.2)_{k10}$, for any $k>1$), corresponding to the composition of $1$-cells.

The second part of the ensuing Corollary \ref{corollary:limits nCat} will be needed extensively in this text. Its proof is well known.\footnote{It follows trivially from the results in \cite[\S 3]{X:2ml}, after an obvious simple generalization.}

\begin{corollary}\label{corollary:limits nCat}

If $\mathcal{C}$ has all limits then $nCat(\mathcal{C})$ is closed under limits in the functor category $\mathcal{C}^{n\mathbb{P}}$, $n\geq 1$, where all limits exist and are calculated pointwise.

In particular, for $\mathcal{C}=Set$, $nCat$ is closed under limits in $\hat{n\mathbb{P}}=Set^{n\mathbb{P}}$.
\end{corollary}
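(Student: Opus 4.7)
The plan is to exploit two standard facts: (i) in any functor category $\mathcal{C}^{n\mathbb{P}}$ with $\mathcal{C}$ complete, limits exist and are computed pointwise, and (ii) each of the conditions carving out $nCat(\mathcal{C})$ as a full subcategory of $\mathcal{C}^{n\mathbb{P}}$ is itself a ``limit condition" in $\mathcal{C}$, hence preserved by limits in $\mathcal{C}^{n\mathbb{P}}$.

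First I would recall the pointwise construction: given a diagram $D:\mathbb{I}\to nCat(\mathcal{C})\subseteq\mathcal{C}^{n\mathbb{P}}$, one defines $L\in\mathcal{C}^{n\mathbb{P}}$ by $L(P)=\underleftarrow{Lim}_{\alpha\in\mathbb{I}}\,D_\alpha(P)$ for each object $P$ of $n\mathbb{P}$, with action on morphisms of $n\mathbb{P}$ induced by the universal property of the limit in $\mathcal{C}$. Together with the evident projection cone $\pi^\alpha:L\Rightarrow D_\alpha$, this $L$ is a limit of $D$ in $\mathcal{C}^{n\mathbb{P}}$.

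Second, I would unpack what it means for $L$ to lie in $nCat(\mathcal{C})$. By definition (section \ref{sec:nCat}), for every triple $0\leq i<j<k\leq n$ and for each horizontal and vertical precategory diagram appearing in $(10.2)_{kji}$, the image under $L$ must form an internal category in $\mathcal{C}$. Being an internal category amounts to two kinds of requirements: (a) the composability square (the one built from $d,c,q,r$) is a \emph{pullback} in $\mathcal{C}$, and (b) certain associativity and unit \emph{commutative diagrams} involving $m,e,d,c$ commute in $\mathcal{C}$.

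Third, I would verify each type of condition for $L$, using that it is a pointwise limit of objects in $nCat(\mathcal{C})$. For commutative diagram conditions: if a finite diagram in $\mathcal{C}$ commutes after composition with each $D_\alpha$, then it commutes after composition with $L$, because parallel composites into $L(P)$ that agree after every projection $\pi^\alpha_P:L(P)\to D_\alpha(P)$ must be equal by the universal property of $\underleftarrow{Lim}_\alpha D_\alpha(P)$. For pullback conditions: the composability square at $L$ is the pointwise ($\mathbb{I}$-indexed) limit of pullback squares in $\mathcal{C}$, and since limits commute with limits (pullbacks being finite limits), this $\mathbb{I}$-indexed limit of pullbacks is again a pullback in $\mathcal{C}$. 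These two preservation principles together give $L\in nCat(\mathcal{C})$.

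The main (and only) obstacle is essentially bookkeeping: for each of the $\frac{(n-1)n(n+1)}{6}$ triples $(k,j,i)$ one must traverse the six precategory morphism diagrams making up $(10.2)_{kji}$, and for each horizontal/vertical precategory in it check the pullback and the associativity/unit equations. However, the verification is uniform: every single clause reduces to ``limits preserve commutativity'' or ``limits commute with pullbacks'', so no new idea is needed in any one case. The ``in particular" part for $\mathcal{C}=Set$ is immediate since $Set$ is complete, which also ensures the existence (and not only the closure) of all limits in $nCat$.
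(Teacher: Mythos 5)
Your argument is correct and is exactly the standard pointwise-limit argument that the paper itself invokes (the paper gives no proof here, only a footnote citing \cite[\S 3]{X:2ml} for ``an obvious simple generalization''). Your two preservation principles --- jointly monic limit projections detect equality of parallel composites, and limits commute with pullbacks --- are precisely what make that generalization work, so the proposal fills in the omitted details along the intended lines.
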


Consider again the category of all categories $Cat$ and its full inclusion in the category of precategories $\hat{\mathbb{P}}=Set^\mathbb{P}$. A functor $p:\mathbb{E}\rightarrow \mathbb{B}$ is an effective descent morphism (e.d.m.)\footnote{Also called a \emph{monadic extension} in categorical Galois theory.} in $Cat = 1Cat$ if and only if it is surjective on composable triples of morphisms. The proof of this statement can be found in \cite[Proposition 6.2]{JST:edm}. In a completely analogous way, a class of effective descent morphisms in $nCat$ is going to be given in the following Proposition \ref{proposition:EDM(nCat)} (cf.\ \cite[§4]{X:2ml}, where e.d.m.\ in $2Cat$ are obtained similarly for the special case $n=2$).

\begin{proposition}\label{proposition:EDM(nCat)}

A $n$-functor $np:n\mathbb{E}\rightarrow n\mathbb{B}$ is an e.d.m.\ in the category of all $n$-categories $nCat$ ($n\geq 2$) if it is surjective both on
\begin{itemize}
\item vertically composable triples of horizontally composable pairs of $n$-cells, and on

\item horizontally composable triples of vertically composable pairs of $n$-cells,

for all $i$ and $j$ such that $0\leq i<j<n$.\footnote{See the following Example \ref{example:EDM(nCat)}, and the last paragraph of section \ref{sec:nCat}.}
\end{itemize}

Meaning that, every composable triple of morphisms in the categories $n\mathbb{B}(P^{niji})$ and $n\mathbb{B}(P^{nji})$ are the image of some composable triple of morphisms in the categories $n\mathbb{E}(P^{niji})$ and $n\mathbb{E}(P^{nji})$, by (the restrictions of the $n$-functor) $np(P^{ni}):n\mathbb{E}(P^{ni})\rightarrow n\mathbb{B}(P^{ni})$ and $np(P^{nj}):n\mathbb{E}(P^{nj})\rightarrow n\mathbb{B}(P^{nj})$, respectively ($0\leq i<j<n$).
\end{proposition}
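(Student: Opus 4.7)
The plan is to adapt the strategy used in \cite{X:2ml} for the case $n=2$, viewing $nCat$ as a full subcategory of the presheaf category $\hat{n\mathbb{P}}=Set^{n\mathbb{P}}$ closed under limits (cf.\ Corollary \ref{corollary:limits nCat}). The starting point is the classical characterization of effective descent morphisms in $Cat=1Cat$ given in \cite[Proposition 6.2]{JST:edm}: a functor $p:\mathbb{E}\to\mathbb{B}$ is an e.d.m.\ if and only if it is surjective on composable triples of morphisms. This result in fact comes from a more general criterion in \cite{JST:edm} for e.d.m.\ inside presheaf-like categories, controlled by surjectivity on certain descent diagrams, and it is this general criterion that I would invoke for the $n$-dimensional setting.

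First I would fix integers $0\le i<j<n$ and consider the restrictions of $np:n\mathbb{E}\to n\mathbb{B}$ to the precategory diagrams arising from the $2$-precategory diagram $(10.2)_{nji}$. For each such $(i,j)$, the images $n\mathbb{E}(P^{niji})\rightrightarrows n\mathbb{E}(P^{ni})$ and $n\mathbb{E}(P^{nji})\rightrightarrows n\mathbb{E}(P^{nj})$ are categories (with $n$-cells as morphisms), and similarly for $n\mathbb{B}$. The two hypotheses of the statement assert exactly that $np(P^{ni})$ and $np(P^{nj})$ are surjective on composable triples in the ``doubled'' categories $n\mathbb{B}(P^{niji})$ and $n\mathbb{B}(P^{nji})$, which is the precise condition extracted from \cite[Proposition 6.2]{JST:edm} applied to each of the two component categories inside $(10.2)_{nji}$.

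Next I would assemble these ``local'' e.d.m.\ conditions into a global one in $nCat$. Since every morphism in $n\mathbb{P}$ between objects indexed by $n$-cells factors through one of the horizontal generators $P^{ni}$ or vertical generators $P^{nj}$ (the interchange law being already encoded in the commutativities of $(10.2)_{kji}$), and since limits in $nCat$ are computed pointwise in $\hat{n\mathbb{P}}$ (Corollary \ref{corollary:limits nCat}), the descent data with respect to $np$ decomposes pointwise along these generators. The general descent criterion of \cite{JST:edm}, combined with the fact that each restriction is an e.d.m.\ in $Cat$, then yields effective descent for $np$ in $\hat{n\mathbb{P}}$; fullness of the inclusion $nCat\hookrightarrow\hat{n\mathbb{P}}$ together with pointwise computation of limits transfers this conclusion to $nCat$.

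The main obstacle I anticipate is the book-keeping required to justify the last step: one must show that descent data over $n\mathbb{B}$ in the presheaf category, when it satisfies the pointwise $n$-category axioms, actually glues to an object of $nCat$ and not merely of $\hat{n\mathbb{P}}$. This is where the precise form of $(10.2)_{kji}$ matters, and where I would imitate carefully the argument of \cite[§4]{X:2ml}: the commutativity relations between the horizontal and vertical precategory morphism diagrams in $(10.2)_{kji}$ ensure that the glued object inherits all the associativity, identity, and interchange axioms necessary to be an $n$-category. The induction on $n$, using the shared horizontal/vertical slices of $n\mathbb{P}$ as the inductive skeleton, is routine once this compatibility is established.
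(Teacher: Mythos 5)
Your overall strategy is the paper's: pass to the presheaf category $\hat{n\mathbb{P}}$, use the results of \cite{JST:edm}, restrict $np$ to the rows and columns of the diagrams $(10.2)_{kji}$, and apply the characterization of effective descent morphisms in $Cat$ as functors surjective on composable triples. However, the middle of your argument misplaces where the hypotheses actually do their work, and this is a genuine gap. You claim that the restrictions of $np$ being e.d.m.\ in $Cat$ is what ``yields effective descent for $np$ in $\hat{n\mathbb{P}}$'', and that ``fullness of the inclusion together with pointwise computation of limits transfers this conclusion to $nCat$.'' Neither half is right as stated. Effective descent in a presheaf category is simply pointwise surjectivity, which follows at once from either of the two surjectivity hypotheses (every $k$-cell, $k\leq n$, is a degenerate $n$-cell); no appeal to the $Cat$ characterization is needed there. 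Conversely, fullness of a limit-closed subcategory does \emph{not} by itself transfer effective descent from the ambient category to the subcategory: that transfer is governed by \cite[Corollary 3.9]{JST:edm}, which says that $np$, already an e.d.m.\ in $\hat{n\mathbb{P}}$, is an e.d.m.\ in $nCat$ if and only if for every pullback square in $\hat{n\mathbb{P}}$ over $np$ whose pulled-back object lies in $nCat$, the object being pulled back also lies in $nCat$.

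It is in verifying this reflection condition that the surjectivity-on-triples hypotheses are actually used: since the pullback is computed pointwise, it restricts to a pullback in $\hat{\mathbb{P}}$ over each row $P^{ji}$, $P^{ki}$, $P^{kji}$ and each column $P^{kiji}$, $P^{kj}$, $P_i$ of $(10.2)_{kji}$; the hypotheses make the corresponding restrictions of $np$ surjective on composable triples, hence e.d.m.\ in $Cat$ by \cite[Proposition 6.2]{JST:edm}; and the analogous reflection property for $Cat\subseteq\hat{\mathbb{P}}$ then forces each row and column of the unknown presheaf to be a category, i.e.\ forces it into $nCat$. Your final paragraph does anticipate the right obstacle, but frames it as a gluing of descent data plus an induction on $n$; once Corollary 3.9 is invoked in the correct place, no explicit descent data and no induction are required. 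As written, the proposal would not compile into a correct proof without reorganizing the argument along these lines.
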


\begin{proof}

Let $np:n\mathbb{E}\rightarrow n\mathbb{B}$ be surjective on vertically/horizontally composable triples of horizontally/vertically composable pairs of $n$-cells, $0\leq i<j<n$. Then, $np$ is an e.d.m.\ in $\hat{n\mathbb{P}}=Set^{n\mathbb{P}}$, since the effective descent morphisms in a category of presheaves are simply those surjective pointwise (which is implied by either surjectivity on triples of composable $n$-cells, since $k$-cells are special ``degenerate" $n$-cells, $k\leq n$). Hence, the following instance of \cite[Corollary 3.9]{JST:edm} can be applied:

if $np:n\mathbb{E}\rightarrow n\mathbb{B}$ in $nCat$ is an e.d.m.\ in $\hat{n\mathbb{P}}=Set^{n\mathbb{P}}$ then $np$ is an e.d.m.\ in $nCat$ if and only if, for every pullback square

\begin{picture}(100,80)
\put (40,0){$n\mathbb{E}$}\put (40,50){$n\mathbb{D}$}\put (145,0){$n\mathbb{B}$}\put (145,50){$nA$}
\put(200,25){$(11.1)$}
\put(90,10){$np$}
\put(70,3){\vector(1,0){58}}\put(70,55){\vector(1,0){58}}
\put(50,45){\vector(0,-1){33}}\put (155,45){\vector(0,-1){33}}
\end{picture}\\

\noindent in $\hat{n\mathbb{P}}=Set^{n\mathbb{P}}$ such that $n\mathbb{D}$ is in $nCat$, then also $nA$ is in $nCat$.

Since the pullback square $(11.1)$ is calculated pointwise (cf.\ Corollary \ref{corollary:limits nCat}), it induces other pullback squares in $\hat{\mathbb{P}}=Set^{\mathbb{P}}$, corresponding to the three rows $P^{ji}$, $P^{ki}$ and $P^{kji}$, and the three columns $P^{kiji}$, $P^{kj}$ and $P_i$ in the 2-precategory diagrams $(10.2)_{kji}$, for all $0\leq i<j<k\leq n$.\footnote{The total number of these pullbacks is $(n-1)(\frac{n(n+1)}{3}+\frac{n}{2}+1)+1$, corresponding to the distinct precategories in $n\mathbb{P}$, as it is easy to count.}

The fact that $np$ is surjective on vertically/horizontally composable triples of horizontally/vertically composable pairs of $n$-cells, implies that its restrictions (to the rows and columns $n\mathbb{E}(P^{ji})$, $n\mathbb{E}(P^{ki})$, $n\mathbb{E}(P^{kji})$, $n\mathbb{E}(P^{kiji})$, $n\mathbb{E}(P^{kj})$ and $n\mathbb{E}(P_i)$, $0\leq i<j<k\leq n$) are surjective on triples of composable morphisms in $Cat$. Hence, these restrictions are effective descent morphisms in $Cat$. Therefore, $nA$ must always be a $n$-category, provided so is $n\mathbb{D}$.\end{proof}

\begin{example}\label{example:EDM(nCat)}

This example will provide an e.d.m.\ $p:n\mathbb{E}\rightarrow n\mathbb{B}$ in $nCat$ for each $n$-category $n\mathbb{B}$, constructing its domain $n\mathbb{E}$ as the coproduct\footnote{It is obvious that the coproduct of $n$-categories is just the disjoint union, as for categories.} of certain appropriate $n$-subcategories\footnote{Being a $n$-subcategory means that it is contained in $n\mathbb{B}$, and that it is closed under all compositions and identities in $n\mathbb{B}$.} of $n\mathbb{B}$. Such $n$-subcategories are not just $n$-categories, but more specifically $n$-preorders, as defined in the beginning of next section \ref{sec:stableunits&ml nCat}.\\

Consider all the vertically composable triples of horizontally composable pairs $v_{nji}$ and all the horizontally composable triples of vertically composable pairs $h_{nji}$ of $n$-cells in a $n$-category $n\mathbb{B}$, represented in the following diagrams, respectively from left to right, where the vertical arrows $\Downarrow$ are $n$-cells, the horizontal arrows $\rightarrow$ are $j$-cells and the nodes $\bullet$ are $i$-cells, $0\leq i<j<n$:

\begin{picture}(60,40)


\put (0,0){$\bullet$}\put (50,0){$\bullet$}\put (100,0){$\bullet
$}

\put(120,0){;}

\put(10,30){\vector(1,0){35}}\put(10,12){\vector(1,0){35}}
\put(10,-6){\vector(1,0){35}}\put(10,-24){\vector(1,0){35}}

\put(23,18){$\Downarrow$}\put(23,0){$\Downarrow$}
\put(23,-18){$\Downarrow$}

\put(60,30){\vector(1,0){35}}\put(60,12){\vector(1,0){35}}
\put(60,-6){\vector(1,0){35}}\put(60,-24){\vector(1,0){35}}

\put(73,18){$\Downarrow$}\put(73,0){$\Downarrow$}
\put(73,-18){$\Downarrow$}


\put (150,0){$\bullet$}\put (200,0){$\bullet$}
\put (250,0){$\bullet$}\put (300,0){$\bullet$}

\put(320,0){.}

\put(160,18){\vector(1,0){35}}\put(160,0){\vector(1,0){35}}\put(160,-18){\vector(1,0){35}}
\put(210,18){\vector(1,0){35}}\put(210,0){\vector(1,0){35}}\put(210,-18){\vector(1,0){35}}
\put(260,18){\vector(1,0){35}}\put(260,0){\vector(1,0){35}}\put(260,-18){\vector(1,0){35}}

\put(173,6){$\Downarrow$}\put(223,6){$\Downarrow$}\put(273,6){$\Downarrow$}
\put(173,-12){$\Downarrow$}\put(223,-12){$\Downarrow$}\put(273,-12){$\Downarrow$}

\end{picture}\\\\\\

Then, for each of the above two diagrams, build the respective $n$-categories $v\mathbf{4}_{nji}$ and $h\mathbf{4}_{nji}$ as follows:

the structure for the $k$-cells with $0\leq k<n$ is the same as in $n\mathbb{B}$;

the $n$-cells constitute the smallest preorder on $(n-1)$-cells such that

(1) a pair of $(n-1)$-cells is related only if it does share the same initial $(n-2)$-cell and the same terminal $(n-2)$-cell,\footnote{So that the relation underlying the preorder may by codified by arrows corresponding to $n$-cells: a pair $(f,v)$ of $(n-1)$-cells belongs to such a relation only if the domain of $f$ and $g$ is the same $(n-2)$-cell $a$, and the codomain of $f$ and $g$ is also the same $(n-2)$-cell $b$, pictured by \begin{picture}(55,20)(0,0)

\put (0,0){$a$}\put (50,0){$b$}

\put(10,12){\vector(1,0){35}}\put(25,15){$f$}
\put(10,-6){\vector(1,0){35}}\put(25,-15){$g$}

\put(23,0){$\Downarrow$}\put(32,0){$\theta$}

\end{picture} where $\theta$ is the $n$-cell relating $f$ to $g$.\\} and

(2) the diagram $v_{nji}$/$h_{nji}$ corresponding to the vertically/horizontally composable triples of horizontally/vertically composable pairs in question is contained in $v\mathbf{4}_{nji}$/$h\mathbf{4}_{nji}$, and

(3) with such preorder, $v\mathbf{4}_{nji}$/$h\mathbf{4}_{nji}$ is in fact a $n$-category;

notice that there is a trivial preorder satisfying (1), (2) and (3), the one which relates every ordered pair of $(n-1)$-cells with the same initial $(n-2)$-cell and the same terminal $(n-2)$-cell;\footnote{The existence and uniqueness of a $n$-cell between every ordered pair of $(n-1)$-cells, with common initial and terminal $(n-2)$ cells, guarantee that $v\mathbf{4}_{nji}$/$h\mathbf{4}_{nji}$ is closed under all compositions and identities.} hence, the smallest preorder can be obtained intersecting all such preorders satisfying (1), (2) and (3).\\

The $n$-category $$n\mathbb{E}=(\coprod_{0\leq i<j<n}\coprod_{v\in V_{ji}} v\mathbf{4}_{nji})+(\coprod_{0\leq i<j<n}\coprod_{h\in H_{ji}} h\mathbf{4}_{nji}),$$ such that $V_{ji}$/$H_{ji}$ is the set of all vertically/horizontally composable triples of horizontally/vertically composable pairs of $n$-cells in $n\mathbb{B}$, with $j$-cells as arrows and $i$-cells as objects in $n\mathbb{B}$.

Then, there is an e.d.m.\ $np:n\mathbb{E}\rightarrow n\mathbb{B}$ which projects the corresponding copy of $v\mathbf{4}_{ji}$/$h\mathbf{4}_{ji}$, for every $v\in V_{ji}$/$h\in H_{ji}$, $0\leq i<j<n$.\footnote{Remark that $v\mathbf{4}_{ji}$ and $h\mathbf{4}_{ji}$ are really $n$-preorders, as defined just below at the beginning of the following section \ref{sec:stableunits&ml nCat}.}

\end{example}

\section{The reflection of $n$-categories into $n$-preorders has stable units and a monotone-light factorization}
\label{sec:stableunits&ml nCat}

Let $nPreord$ be the full subcategory of $nCat$ determined by the objects $C:n\mathbb{P}\rightarrow Set$ such that $Cd^{n(n-1)}$ and $Cc^{n(n-1)}$ are jointly monic (cf.\ diagram $(10.2)_{n(n-1)(n-2)}$), that is,

\begin{picture}(200,40)(0,0)
\put(0,0){$C(P^{n(n-1)})=$}
\put(70,0){$C(P_{n(n-1)})$}
\put(130,20){\vector(1,0){40}}\put(130,5){\vector(1,0){40}}\put(130,-10){\vector(1,0){40}}
\put(130,-7){$Cr^{n(n-1)}$}\put(130,8){$Cm^{n(n-1)}$}\put(130,25){$Cq^{n(n-1)}$}
\put(175,0){$C(P_n)$}
\put(215,20){\vector(1,0){40}}\put(255,5){\vector(-1,0){40}}\put(215,-10){\vector(1,0){40}}
\put(215,-7){$Cc^{n(n-1)}$}\put(215,8){$Ce^{n(n-1)}$}\put(215,25){$Cd^{n(n-1)}$}
\put(260,0){$C(P_{n-1})$}
\end{picture}\\

\noindent is a preordered set.\\

There is a reflection

\begin{picture}(300,20)

\put(30,0){$H\vdash I: nCat$}
\put(100,3){\vector(1,0){20}}
\put(125,0){$nPreord$,}
\put(-10,0){$(12.1)$}

\put (180,0){$a$}\put (230,0){$b$}
\put (260,0){$a$}\put (310,0){$b$,}

\put(190,12){\vector(1,0){35}}\put(205,15){$f$}
\put(190,-6){\vector(1,0){35}}\put(205,-15){$g$}

\put(243,0){$\mapsto$}

\put(270,12){\vector(1,0){35}}\put(285,15){$f$}
\put(270,-6){\vector(1,0){35}}\put(285,-15){$g$}

\put(203,0){$\Downarrow$}\put(212,0){$\theta$}
\put(283,0){$\Downarrow$}\put(292,0){$\leq$}

\end{picture}\vspace{15pt}

\noindent which identifies all $n$-cells which have the same domain and codomain with respect to $P^{n(n-1)}$.\footnote{In $(12.1)$, $f$ and $g$ stand for generic $(n-1)$-cells and $a$ and $b$ for $(n-2)$-cells.} That is, the reflector $I$ takes the middle vertical category $C(P^{n(n-1)})$ (see diagram $(10.2)_{n(n-1)(n-2)}$) to its image by the well known reflection $Cat\rightarrow Preord$ from categories into preordered sets (see \cite{X:ml}).\\

\subsection{Stable units}\label{subsection:stable units}

The reflection $Cat\rightarrow Preord$ from categories into preorders is known to be a \emph{base monoidal reflection} in the sense of Definition \ref{def:base monoidal reflection}, considering $Cat$ equipped with its cartesian symmetric monoidal structure. It has also stable units in the sense of \cite{CHK:fact} (cf.\ section \ref{sec:Simp., semi-left-exact. and stable units}).

Therefore we can iterate it n times, obtaining exactly the reflection $(12.1)$, also with stable units according to Proposition \ref{proposition:stableunits}:

\begin{theorem}\label{theorem:stable units}

The reflection $H\vdash I: nCat\rightarrow nPreord$ has stable units, for every $n\geq 1$.

\end{theorem}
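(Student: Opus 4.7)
My plan is to prove the theorem by induction on $n$, iterating the derivation machinery of Sections \ref{sec:derived}, \ref{section:derivedReflection} and \ref{sec:Simp., semi-left-exact. and stable units} starting from the cartesian base monoidal reflection $Cat\to Preord$.

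For the base case $n=1$, the adjunction $H\vdash I:Cat\to Preord$ is a base monoidal reflection in the sense of Definition \ref{def:base monoidal reflection} once $Cat$ is equipped with its cartesian monoidal structure: $Preord$ is a full replete subcategory of $Cat$, and the inclusion functor is the right adjoint. That this reflection has stable units is already known (cf.\ \cite{X:ml}, \cite{CJKP:stab}). For the inductive step, assume $nCat\to nPreord$ is a base monoidal reflection with stable units, and that the monoidal structure on $nCat$ is cartesian. Then by Proposition \ref{proposition:derivedReflection} the derived adjunction $nCat$-$Cat\to nPreord$-$Cat$ is again a base monoidal reflection; by Remark \ref{remark:cartesian monoidal} the derived monoidal structure remains cartesian; and by Proposition \ref{proposition:stableunits} the stable units property is inherited. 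Identifying $nCat$-$Cat$ with $(n+1)Cat$ and $nPreord$-$Cat$ with $(n+1)Preord$ then closes the induction.

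The principal obstacle is this last identification within the presheaf framework of Section \ref{sec:nCat}. Under the definition given there, an $(n+1)$-category is a presheaf on $(n+1)\mathbb{P}$ whose restrictions to all horizontal and vertical precategory diagrams are categories; unpacking this layerwise exhibits an $(n+1)$-category as a set of $0$-cells equipped, for each ordered pair, with a hom-$n$-category and with composition and identity $n$-functors satisfying the usual associativity and unit laws --- precisely the data of an $nCat$-enriched category for $nCat$ carrying its cartesian monoidal structure. This matching is compatible with the pointwise pullback description of limits given in Corollary \ref{corollary:limits nCat}. The defining condition of $(n+1)Preord$ constrains only the top layer and so translates to requiring that each hom-$n$-category be an $n$-preorder, i.e., that the enriched category actually lies in $nPreord$-$Cat$. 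The verification, though lengthy, is routine given the explicit description of the derived cartesian structure in Section \ref{section:derivedMonoidal}.
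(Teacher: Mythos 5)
Your proposal is correct and follows essentially the same route as the paper: the paper also obtains Theorem \ref{theorem:stable units} by iterating the derived base monoidal reflection (Propositions \ref{proposition:derivedReflection} and \ref{proposition:stableunits}, with the cartesian structure preserved as in Remark \ref{remark:cartesian monoidal}) starting from the stable-units reflection $Cat\to Preord$. The identification of $nCat$-$Cat$ with $(n+1)Cat$ and of $nPreord$-$Cat$ with $(n+1)Preord$, which you single out and sketch, is exactly the step the paper treats as implicit in ``iterating'' the construction, so your level of detail matches (indeed slightly exceeds) the paper's.
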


\subsection{Monotone-light factorization for $n$-categories via $n$-preorders}\label{subsection:existence m-l fact}

\begin{theorem}\label{theorem:monotone-light fact}

The reflection $H\vdash I: nCat\rightarrow nPreord$ does have a monotone-light factorization, for every $n\geq 1$.

\end{theorem}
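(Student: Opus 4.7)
The plan is to reduce the statement to the Carboni--Janelidze--Kelly--Par\'e theorem on the existence of monotone-light factorization systems (cf.\ \cite{CJKP:stab}), following the template used in \cite{X:ml} for the classical reflection $Cat \to Preord$ and in \cite{X:2ml} for $2Cat \to 2Preord$. That theorem asserts that a reflection of a full replete subcategory determines a monotone-light factorization on the ambient category as soon as two conditions are met: first, the reflection is admissible (semi-left-exact), and second, every object of the ambient category admits an effective descent morphism whose domain lies in the reflective subcategory (the \emph{monotone cover} condition).

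The admissibility hypothesis is secured by the immediately preceding Theorem \ref{theorem:stable units}, which establishes that the reflection $H \vdash I : nCat \to nPreord$ has stable units. Since stable units is strictly stronger than semi-left-exactness in the sense of \cite{CHK:fact}, it in particular implies the admissibility of the reflection with respect to the class of all effective descent morphisms in $nCat$, for every $n \geq 1$.

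For the monotone cover condition, the key input is the combination of Proposition \ref{proposition:EDM(nCat)} with the construction of Example \ref{example:EDM(nCat)}. Given an arbitrary $n$-category $n\mathbb{B}$, Example \ref{example:EDM(nCat)} produces an $n$-functor $np: n\mathbb{E} \to n\mathbb{B}$, where $n\mathbb{E}$ is the coproduct (disjoint union) of the $n$-categories $v\mathbf{4}_{nji}$ and $h\mathbf{4}_{nji}$, one for each vertically/horizontally composable triple of horizontally/vertically composable pairs of $n$-cells in $n\mathbb{B}$; by construction this $np$ is surjective on the composable triples required by Proposition \ref{proposition:EDM(nCat)}, and hence is an effective descent morphism in $nCat$. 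It remains to observe that $n\mathbb{E}$ lies in $nPreord$: each $v\mathbf{4}_{nji}$ and $h\mathbf{4}_{nji}$ was explicitly built as an $n$-preorder (the smallest preorder on $(n-1)$-cells satisfying the compatibility requirements, as noted at the end of Example \ref{example:EDM(nCat)}), and the defining condition of $nPreord$ -- joint monicity of $Cd^{n(n-1)}$ and $Cc^{n(n-1)}$ -- is stable under the pointwise coproducts of $\hat{n\mathbb{P}}$. Thus $np: n\mathbb{E} \to n\mathbb{B}$ is the required monotone cover with $n\mathbb{E} \in nPreord$.

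With both hypotheses in place, the CJKP theorem applies and yields the desired monotone-light factorization on $nCat$. The main obstacle is not conceptual but bookkeeping: one must verify that the coproduct in $nCat$ of the summands from Example \ref{example:EDM(nCat)} really lands in $nPreord$ (which is immediate once the precategory structure is computed component-wise via Corollary \ref{corollary:limits nCat}) and then match the notation with the hypotheses of the CJKP theorem as applied in \cite[\S 5]{X:2ml}. No new categorical input is needed beyond what sections \ref{sec-base monoidal adjunction}--\ref{sec:edm in nCat} provide; the result propagates uniformly from the base case $Cat \to Preord$ by iterating the derived-monoidal-reflection construction of sections \ref{sec:derived}--\ref{section:derivedReflection}.
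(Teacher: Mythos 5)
Your proposal is correct and follows essentially the same route as the paper: invoke the central result of Carboni--Janelidze--Kelly--Par\'e, supplying the stable units property from Theorem \ref{theorem:stable units} and, for each $n\mathbb{B}$, the effective descent morphism $np:n\mathbb{E}\rightarrow n\mathbb{B}$ with $n\mathbb{E}\in nPreord$ from Example \ref{example:EDM(nCat)}. The extra remark that the coproduct of $n$-preorders is again an $n$-preorder is a harmless elaboration of what the paper leaves implicit.
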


\begin{proof}

The statement is a consequence of the central result of \cite{CJKP:stab} (cf.\ Corollary 6.2 in \cite{X:iml}), because $H\vdash I$ has stable units (cf.\ Theorem \ref{theorem:stable units}) and for every $n\mathbb{B}\in nCat$ there is an e.d.m.\ $np:n\mathbb{E}\rightarrow n\mathbb{B}$ with $n\mathbb{E}\in nPreord$ (cf.\ Example \ref{example:EDM(nCat)}).
\end{proof}

In the following section \ref{sec:Vertical and stably-vertical}, it will be proved that the monotone-light factorization system is not trivial, for every $n\geq 1$. That is, it does not coincide with the reflective factorization system associated to the reflection of $nCat$ into $nPreord$.

\section{Vertical and stably-vertical $n$-functors}
\label{sec:Vertical and stably-vertical}

In this section, a characterization will be given of the class of vertical morphisms $\mathcal{E}_I$ in the reflective factorization system $(\mathcal{E}_I,\mathcal{M}_I)$, and of the class of the stably-vertical morphisms $\mathcal{E}'_I$ ($\subseteq\mathcal{E}_I$)\footnote{$\mathcal{E}'_I$ is the largest subclass of $\mathcal{E}_I$ stable under pullbacks. The terminologies ``vertical morphisms" and ``stably-vertical morphisms" were introduced in \cite{for:ins:sep:factorization}.} in the monotone-light factorization system $(\mathcal{E}'_I,\mathcal{M}^*_I)$, both associated to the reflection $I: nCat\rightarrow nPreord$. Then, since $\mathcal{E}'_I$ is a proper class of $\mathcal{E}_I$, we conclude that $(\mathcal{E}'_I,\mathcal{M}^*_I)$ is a non-trivial monotone-light factorization system.\\

Consider a $n$-functor $f:A\rightarrow B$, which is determined by the $n+1$ functions $f_0:A(P_0)\rightarrow B(P_0)$, $f_1:A(P_1)\rightarrow B(P_1)$,..., $f_n:A(P_n)\rightarrow B(P_n)$ (cf.\ diagrams $(10.2)_{kji}$, $0\leq i<j<k\leq n$), so that we may make the identification $f=(f_n,f_{n-1},...,f_0)$.

\begin{proposition}\label{proposition:vertical morphisms}

A $n$-functor $f=(f_n,f_{n-1},...,f_0):A\rightarrow B$ belongs to the class $\mathcal{E}_I$ of vertical $n$-functors if and only if the following two conditions hold:
\begin{enumerate}
\item $f_0$, $f_1$,..., $f_{n-1}$ are bijections;
\item for every two elements $h$ and $h'$ in $A(P_{n-1})$,

\noindent if $Hom_{B(P^{n(n-1)})}(f_{n-1}h,f_{n-1}h')$ is nonempty

\noindent then so is $Hom_{A(P^{n(n-1)})}(h,h')$.
\end{enumerate}
\end{proposition}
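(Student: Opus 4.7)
The plan is to invoke the standard fact that in the reflective factorization system $(\mathcal{E}_I,\mathcal{M}_I)$ associated to the reflection $H\vdash I$, a morphism $f$ lies in $\mathcal{E}_I$ if and only if $I(f)$ is an isomorphism in the reflective subcategory $nPreord$. Thus the statement reduces to proving that $I(f)$ is an isomorphism in $nPreord$ if and only if conditions (1) and (2) both hold.

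First I would unpack the action of $I$ on a $n$-functor $f=(f_n,\ldots,f_0):A\rightarrow B$. By the explicit description of $H\vdash I$ given in Section \ref{sec:stableunits&ml nCat}, the reflector leaves all lower-dimensional data untouched and applies the classical reflection $Cat\rightarrow Preord$ only to the top vertical category $A(P^{n(n-1)})$, identifying any two $n$-cells sharing the same source and target $(n-1)$-cells. Consequently $I(A)(P_k)=A(P_k)$ and $I(f)_k=f_k$ for $0\le k\le n-1$, while $I(A)(P_n)$ is canonically identified with the set of pairs $(h,h')\in A(P_{n-1})^2$ for which $\mathrm{Hom}_{A(P^{n(n-1)})}(h,h')$ is nonempty; under this identification $I(f)_n$ sends $(h,h')\mapsto(f_{n-1}h,f_{n-1}h')$.

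Next, by Corollary \ref{corollary:limits nCat} isomorphisms in $nCat$ (and hence in its full subcategory $nPreord$) are detected pointwise in $\hat{n\mathbb{P}}$, so $I(f)$ is an isomorphism precisely when $f_k$ is a bijection for each $0\le k\le n-1$ and $I(f)_n$ is a bijection. The first half is exactly condition (1). Granted (1), injectivity of $I(f)_n$ is automatic from injectivity of $f_{n-1}$ on ordered pairs, so bijectivity of $I(f)_n$ reduces to surjectivity; and since $f_{n-1}$ is bijective every pair in $I(B)(P_n)$ can be written uniquely as $(f_{n-1}h,f_{n-1}h')$, which lies in the image of $I(f)_n$ exactly when $\mathrm{Hom}_{A(P^{n(n-1)})}(h,h')$ is nonempty. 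Surjectivity of $I(f)_n$ is therefore precisely condition (2).

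I expect no substantive difficulty: the only mildly delicate step is the explicit identification of $I(A)(P_n)$ with the graph of the induced preorder on $A(P_{n-1})$, which follows at once from the construction of the classical $Cat\rightarrow Preord$ reflection combined with the pointwise description of the reflector. All remaining verifications are routine set-level manipulations.
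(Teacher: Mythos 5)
Your proposal is correct and follows essentially the same route as the paper: both reduce membership in $\mathcal{E}_I$ to $If$ being an isomorphism via \cite[\S 3.1]{CJKP:stab}, observe that $If_k=f_k$ for $k\le n-1$, and identify bijectivity of $If_n$ with condition (2). The paper leaves the final identification as ``trivial'' while you spell out the description of $I(A)(P_n)$ as the graph of the induced preorder, which is a harmless elaboration of the same argument.
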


\begin{proof}

The $n$-functor $f=(f_n,f_{n-1},...,f_0)$ belongs to $\mathcal{E}_I$ if and only if $If$ is an isomorphism (cf.\ \cite[\S 3.1]{CJKP:stab}), that is, $If_0$, $If_1$,..., $If_n$ are bijections. Since $If_0=f_0$,..., $If_{n-1}=f_{n-1}$, the fact that $f\in \mathcal{E}_I$ implies and is implied by (1) and (2) is trivial.
\end{proof}

\begin{proposition}\label{proposition:stably-vertical}

A $n$-functor $f=(f_n,f_{n-1},...,f_0):A\rightarrow B$ belongs to the class $\mathcal{E}'_I$ of stably-vertical $n$-functors if and only if the following two conditions hold:

\begin{enumerate}

\item $f_0$, $f_1$,..., $f_{n-1}$ are bijections;
\item for every two elements $h$ and $h'$ in $A(P_{n-1})$,

\noindent $f$ induces a surjection

\noindent $Hom_{A(P^{n(n-1)})}(h,h')\rightarrow Hom_{B(P^{n(n-1)})}(f_{n-1}h,f_{n-1}h')$

\noindent ($f$ is a ``full functor on $n$-cells").

\end{enumerate}

\end{proposition}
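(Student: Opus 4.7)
The plan is to prove both implications separately, using Proposition \ref{proposition:vertical morphisms} together with Corollary \ref{corollary:limits nCat} (limits in $nCat$ are computed pointwise).

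For the direction $(\Leftarrow)$, assume (1) and (2). Since (2) implies the weaker non-emptiness condition of Proposition \ref{proposition:vertical morphisms}, $f \in \mathcal{E}_I$. Let $g: C \to B$ be arbitrary and form the pullback $\pi: A\times_B C \to C$ in $nCat$; by Corollary \ref{corollary:limits nCat} this is computed level by level, $(A\times_B C)(P_k) = A(P_k)\times_{B(P_k)} C(P_k)$. For $k < n$ the map $f_k$ is bijective by (1), hence so is its pullback $\pi_k$. To check the hypothesis of Proposition \ref{proposition:vertical morphisms}(2) for $\pi$, take $(a,c), (a',c') \in (A\times_B C)(P_{n-1})$ and $\gamma \in Hom_{C(P^{n(n-1)})}(c,c')$. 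Then $g_n(\gamma) \in Hom_{B(P^{n(n-1)})}(f_{n-1}(a), f_{n-1}(a'))$, so by the surjectivity assumption (2) on $f$, there exists $\alpha \in Hom_{A(P^{n(n-1)})}(a,a')$ with $f_n(\alpha) = g_n(\gamma)$. The pair $(\alpha,\gamma)$ is the required element of $Hom_{(A\times_B C)(P^{n(n-1)})}((a,c),(a',c'))$, so $\pi \in \mathcal{E}_I$. Since this holds for every pullback, $f \in \mathcal{E}'_I$.

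For the direction $(\Rightarrow)$, assume $f \in \mathcal{E}'_I \subseteq \mathcal{E}_I$. Then (1) holds by Proposition \ref{proposition:vertical morphisms}. For (2), suppose for contradiction that there exist $h,h'\in A(P_{n-1})$ and $\beta \in Hom_{B(P^{n(n-1)})}(f_{n-1}h, f_{n-1}h')$ with no preimage under the induced map. Note that bijectivity of $f_{n-1}$ forces any $f_n$-preimage of $\beta$ to lie in $Hom_A(h,h')$, so $\beta$ has no preimage in $A(P_n)$ at all. The existence of $\beta$ forces $f_{n-1}h$ and $f_{n-1}h'$ to be parallel in $B$, and iterating, their source/target structure is globular.

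Construct the $n$-globe $\mathbf{G}^n \in nCat$: it has two $k$-cells $s_k, t_k$ for each $0 \leq k < n$, all parallel (so that $s(s_k) = s(t_k) = s_{k-1}$ and $t(s_k) = t(t_k) = t_{k-1}$), and a single non-identity $n$-cell $\iota: s_{n-1} \to t_{n-1}$. Because $s_{n-1} \neq t_{n-1}$ and, iteratively, all relevant source/target pairs at every lower level are distinct, no vertical nor horizontal composition of $\iota$ with itself is defined in $\mathbf{G}^n$; thus the only element of $Hom_{\mathbf{G}^n(P^{n(n-1)})}(s_{n-1}, t_{n-1})$ is $\iota$. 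Define $g:\mathbf{G}^n \to B$ by sending $\iota$ to $\beta$ and each $s_k, t_k$ to the corresponding iterated source/target of $\beta$ in $B$; this is a well-defined $n$-functor by the globularity noted above. Form the pullback $\pi: A\times_B \mathbf{G}^n \to \mathbf{G}^n$; for $k < n$ the projection $\pi_k$ is the pullback of the bijection $f_k$, hence bijective. The pair $(h, s_{n-1}), (h', t_{n-1}) \in (A\times_B \mathbf{G}^n)(P_{n-1})$ admits the morphism $\iota$ between their $\mathbf{G}^n$-components, but any preimage $(\alpha,\iota)$ in $(A\times_B \mathbf{G}^n)(P_n)$ would require $f_n(\alpha) = \beta$, which is impossible. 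Hence $\pi$ fails Proposition \ref{proposition:vertical morphisms}(2), so $\pi \notin \mathcal{E}_I$, contradicting $f \in \mathcal{E}'_I$.

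The main obstacle is the construction of the test $n$-category $\mathbf{G}^n$ in the forward direction, and the verification that its only non-trivial $n$-cell between $s_{n-1}$ and $t_{n-1}$ is $\iota$; this amounts to checking that, in a free $n$-category built on globular data, the absence of shared boundaries prevents any composite $n$-cell beyond $\iota$ and identities. The remainder of the argument then reduces cleanly to an application of Proposition \ref{proposition:vertical morphisms}, in direct analogy with the characterization of stably-vertical functors for the special case $n = 1$ of the reflection $Cat \to Preord$ treated in \cite{X:ml} and the case $n=2$ in \cite{X:2ml}.
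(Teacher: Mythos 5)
Your proof is correct and has the same overall skeleton as the paper's: both directions reduce, via the pointwise computation of pullbacks (Corollary \ref{corollary:limits nCat}), to the characterization of vertical $n$-functors in Proposition \ref{proposition:vertical morphisms}. The differences are in the execution. For the ``if'' direction the paper simply cites the known characterization of stably-vertical functors for $Cat\rightarrow Preord$ from \cite{X:ml} applied to the component $(f_n,f_{n-1})$, whereas you verify conditions (1) and (2) of Proposition \ref{proposition:vertical morphisms} for an arbitrary pullback directly, lifting an $n$-cell $\gamma$ of $C$ through the fullness hypothesis to produce the pair $(\alpha,\gamma)$; this is more self-contained and equally valid. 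For the ``only if'' direction the paper pulls back along the inclusion of the $n$-subcategory of $B$ that keeps all $k$-cells of $B$ for $k<n$ and retains only the $n$-cell structure generated by the missing cell $\theta$ (the construction of Example \ref{example:EDM(nCat)}), while you pull back along a classifying map from the minimal $n$-globe $\mathbf{G}^n$. Your test object is genuinely different and arguably cleaner, since it is independent of the lower-dimensional structure of $B$; the price is that you must check that $\mathbf{G}^n$ is a legitimate object of $nCat$, that $g$ is a well-defined $n$-functor (this follows from the globularity identities $d^{ki}=d^{ji}d^{kj}$, etc., encoded in the precategory morphism diagrams), and that $Hom_{\mathbf{G}^n(P^{n(n-1)})}(s_{n-1},t_{n-1})=\{\iota\}$ --- a point you correctly flag as essential, and which does hold because no two non-identity cells of the globe share the boundaries required for any vertical or horizontal composition, so only identities and whiskerings by identities arise. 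Your observation that bijectivity of $f_{n-1}$ forces any $f_n$-preimage of $\beta$ to lie in $Hom_{A(P^{n(n-1)})}(h,h')$ is a detail the paper leaves implicit and is worth keeping.
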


\begin{proof}

As every pullback $g^*(f)=\pi_1:C\times_BA\rightarrow C$ in $nCat$ of $f$ along any $n$-functor $g:C\rightarrow B$ is calculated pointwise, and being $(f_n,f_{n-1}):A(P^{n(n-1)})\rightarrow B(P^{n(n-1)})$ a stably-vertical functor for the reflection $Cat\rightarrow Preord$, that is, $f_{n-1}$ a bijection and $(f_n,f_{n-1})$ a full functor (cf.\ Propositions 4.4 and 3.2 in \cite{X:ml}), then (1) and (2) imply that $g^*(f)$ belongs to $\mathcal{E}_I$ (cf.\ last Proposition \ref{proposition:vertical morphisms}).

Hence, $f\in \mathcal{E}'_I$ if (1) and (2) hold.\\

If $f\in \mathcal{E}'_I$, then $f\in \mathcal{E}_I$ ($\mathcal{E}'_I\subseteq \mathcal{E}_I$), and therefore (1) holds.

Suppose now that (2) does not hold, so that there is $\theta :f_{n-1}h\rightarrow f_{n-1}h'$ not in the image of $f$, and consider the $n$-category $C$ generated\footnote{Generated in the sense of Example \ref{example:EDM(nCat)}, that is, $C$ has the same structure of $B$, with the exception of $n$-cells, whose structure is the least which includes the following diagram.} by the diagram \begin{picture}(110,30)(0,0)

\put (0,0){$f_{n-2}(a)$}\put (70,0){$f_{n-2}(a')$,}

\put(35,12){\vector(1,0){35}}\put(40,16){$f_1h$}
\put(35,-6){\vector(1,0){35}}\put(40,-16){$f_1h'$}

\put(48,0){$\Downarrow$}\put(57,0){$\theta$}

\end{picture}\ \ and let $g$ be the inclusion of $C$ in $B$. Then, $C\times_BA$ contains \begin{picture}(110,30)(0,0)

\put (0,0){$f_{n-2}(a)$}\put (70,0){$f_{n-2}(a')$,}

\put(35,12){\vector(1,0){35}}\put(40,16){$f_1h$}
\put(35,-6){\vector(1,0){35}}\put(40,-16){$f_1h'$}

\end{picture}\vspace{20pt}

\noindent with no non-identity $2$-cells, and so $g^*(f)$ is not in $\mathcal{E}_I$.\\

Hence, if $f\in \mathcal{E}'_I$ then (1) and (2) must hold.\end{proof}

It is evident that $\mathcal{E}'_I$ is a proper class of $\mathcal{E}_I$, therefore the monotone-light factorization system $(\mathcal{E}'_I,\mathcal{M}^*_I)$ is non-trivial ($\neq (\mathcal{E}_I,\mathcal{M}_I)$).

\section{Trivial coverings for $n$-categories via $n$-preorders}
\label{sec:Trivial coverings}

A $n$-functor $f:A\rightarrow B$ belongs to the class $\mathcal{M}_I$ of trivial coverings (with respect to the reflection $H\vdash I:nCat\rightarrow nPreord$) if and only if the following square

\begin{picture}(100,80)
\put (45,0){$B$}\put (45,50){$A$}\put (140,0){$I(B)$}\put (140,50){$I(A)$}
\put (35,25){$f$}\put (90,60){$\eta_A$}
\put(160,25){$If$}\put(200,25){$(14.1)$}
\put(90,10){$\eta_B$}
\put(70,3){\vector(1,0){58}}\put(70,55){\vector(1,0){58}}
\put(50,45){\vector(0,-1){33}}\put (155,45){\vector(0,-1){33}}
\end{picture}\\

\noindent is a pullback diagram, where $\eta_A$ and $\eta_B$ are unit morphisms for the reflection $H\vdash I:nCat\rightarrow nPreord$ (cf.\ \cite[Theorem 4.1]{CHK:fact}).\\

Since the pullback (as any limit) is calculated pointwise in $nCat$ (cf.\ Corollary \ref{corollary:limits nCat}), then $f\in \mathcal{M}_I$ if and only if the following squares are pullbacks, corresponding to the pointwise components of $\eta_A$ and of $\eta_B$ (cf.\ diagrams $(10.2)_{kji}$), $0\leq l\leq n$ and $0\leq i<j<k\leq n$:

\begin{picture}(100,80)
\put (37,0){$B(P_l)$}\put (37,50){$A(P_l)$}\put (135,0){$I(B)(P_l)$}\put (135,50){$I(A)(P_l)$}
\put (33,25){$f_{P_l}$}\put (90,60){$\eta_{A(P_l)}$}
\put(160,25){$If_{P_l}$;}\put(87,25){($D_l$)}
\put(90,10){$\eta_{B(P_l)}$}
\put(70,3){\vector(1,0){58}}\put(70,55){\vector(1,0){58}}
\put(50,45){\vector(0,-1){33}}\put (155,45){\vector(0,-1){33}}
\end{picture}

\begin{picture}(400,80)
\put (-4,0){$B(P_{ji})$}\put (-4,50){$A(P_{ji})$}\put (100,0){$I(B)(P_{ji})$}\put (100,50){$I(A)(P_{ji})$}
\put (-7,25){$f_{P_{ji}}$}\put (55,60){$\eta_{A(P_{ji})}$}
\put(125,25){$If_{P_{ji}}$;}\put(52,25){($D_{ji}$)}
\put(55,10){$\eta_{B(P_{ji})}$}
\put(35,3){\vector(1,0){58}}\put(35,55){\vector(1,0){58}}
\put(15,45){\vector(0,-1){33}}\put (120,45){\vector(0,-1){33}}

\put (196,0){$B(P_{kj})$}\put (196,50){$A(P_{kj})$}\put (300,0){$I(B)(P_{kj})$}\put (300,50){$I(A)(P_{kj})$}
\put (193,25){$f_{P_{kj}}$}\put (255,60){$\eta_{A(P_{kj})}$}
\put(325,25){$If_{P_{kj}}$;}\put(252,25){($D_{kj}$)}
\put(255,10){$\eta_{B(P_{kj})}$}
\put(235,3){\vector(1,0){58}}\put(235,55){\vector(1,0){58}}
\put(215,45){\vector(0,-1){33}}\put (320,45){\vector(0,-1){33}}

\end{picture}

\begin{picture}(400,80)
\put (-4,0){$B(P_{ki})$}\put (-4,50){$A(P_{ki})$}\put (100,0){$I(B)(P_{ki})$}\put (100,50){$I(A)(P_{ki})$}
\put (-7,25){$f_{P_{ki}}$}\put (55,60){$\eta_{A(P_{ki})}$}
\put(125,25){$If_{P_{ki}}$;}\put(52,25){($D_{ki}$)}
\put(55,10){$\eta_{B(P_{ki})}$}
\put(35,3){\vector(1,0){58}}\put(35,55){\vector(1,0){58}}
\put(15,45){\vector(0,-1){33}}\put (120,45){\vector(0,-1){33}}

\put (190,0){$B(P_{kji})$}\put (190,50){$A(P_{kji})$}\put (300,0){$I(B)(P_{kji})$.}\put (300,50){$I(A)(P_{kji})$}
\put (189,25){$f_{P_{kji}}$}\put (255,60){$\eta_{A(P_{kji})}$}
\put(325,25){$If_{P_{kji}}$}\put(252,25){($D_{kji}$)}
\put(255,10){$\eta_{B(P_{kji})}$}
\put(235,3){\vector(1,0){58}}\put(235,55){\vector(1,0){58}}
\put(215,45){\vector(0,-1){33}}\put (320,45){\vector(0,-1){33}}

\end{picture}\\

The squares $(D_1),(D_2),...,(D_{n-1})$ are pullbacks since $\eta_{A(P_l)}$ and $\eta_{B(P_l)}$ are identity maps for $l<n$ (cf.\ diagrams $(10.2)_{kji}$ and the definition of the reflection $H\vdash I:nCat\rightarrow nPreord$ in $(12.1)$).\\

Notice that, restricting to the precategory diagram $P^{n(n-1)}$, we obtain from $(14.1)$ the following square in $Cat$, with unit morphisms of the reflection of all categories into preorders $Cat\rightarrow Preord$ (cf.\ \cite{X:ml}),

\begin{picture}(100,80)
\put (10,0){$B(P^{n(n-1)})$}\put (10,50){$A(P^{n(n-1)})$}\put (140,0){$I(B)(P^{n(n-1)})$.}\put (140,50){$I(A)(P^{n(n-1)})$}
\put (5,25){$f_{P^{n(n-1)}}$}\put (70,62){$\eta_{A(P^{n(n-1)})}$}
\put(160,25){$If_{P^{n(n-1)}}$}
\put(70,10){$\eta_{B(P^{n(n-1)})}$}
\put(70,3){\vector(1,0){58}}\put(70,55){\vector(1,0){58}}
\put(50,45){\vector(0,-1){33}}\put (155,45){\vector(0,-1){33}}
\end{picture}\\

It is known (cf.\ \cite[Proposition 3.1]{X:ml}) that this square is a pullback in $Cat$ if and only if, for every two objects $h$ and $h'$ in $A(P_{n-1})$ with $Hom_{A(P^{n(n-1)})}(h,h')$ nonempty, the map
$$Hom_{A(P^{n(n-1)})}(h,h')\rightarrow Hom_{B(P^{n(n-1)})}(f_{n-1}h,f_{n-1}h')$$
induced by $f$ is a bijection.

A necessary condition for the $n$-functor $f$ to be a trivial covering was just stated; the following Lemma \ref{lemma:remaining pullbacks} will help to show that this necessary condition is also sufficient in next Proposition \ref{proposition:trivial coverings}.

\begin{lemma}\label{lemma:remaining pullbacks}
Consider the following commutative parallelepiped

\begin{picture}(100,130)(50,0)\setlength{\unitlength}{0.5mm}

\put(60,0){$B_2$}\put(70,-5){\vector(1,-1){27}}\put(66,-20){$\eta_{B,2}$}
\put(75,5){\vector(1,0){40}}\put(75,0){\vector(1,0){40}}
\put(90,-10){$r^B$}\put(90,10){$q^B$}
\put(120,0){$B_1$}\put(128,-5){\vector(2,-1){48}}\put(162,-20){$\eta_{B,1}$}
\put(135,5){\vector(1,0){40}}\put(135,0){\vector(1,0){40}}
\put(157,-10){$d^B$}\put(157,10){$c^B$}
\put(180,0){$B_0$}\put(190,-2){\vector(2,-1){53}}\put(215,-10){$\eta_{B,0}$}

\put (55,55){$f_2$} \put (128,55){$f_1$}\put
(188,55){$f_0$}\put (65,65){\vector(0,-1){53}}\put
(183,65){\vector(0,-1){53}}\put (123,65){\vector(0,-1){53}}

\put(60,70){$A_2$}\put(70,65){\vector(1,-1){25}}\put(72,45){$\eta_{A,2}$}
\put(75,75){\vector(1,0){40}}\put(75,70){\vector(1,0){40}}
\put(90,60){$r^A$}\put(90,80){$q^A$}
\put(120,70){$A_1$}\put(135,65){\vector(1,-1){25}}\put(162,45){$\eta_{A,1}$}
\put(135,75){\vector(1,0){40}}\put(135,70){\vector(1,0){40}}
\put(157,60){$d^A$}\put(157,80){$c^A$}
\put(180,70){$A_0$}\put(190,65){\vector(2,-1){50}}\put(215,60){$\eta_{A,0}$}

\put(92,-40){$I(B)_2$}
\put(115,-35){\vector(1,0){50}}\put(115,-40){\vector(1,0){50}}
\put(130,-50){$Ir^B$}\put(130,-30){$Iq^B$}
\put(168,-40){$I(B)_1$}
\put(190,-35){\vector(1,0){50}}\put(190,-40){\vector(1,0){50}}
\put(205,-50){$Id^B$}\put(205,-30){$Ic^B$}
\put(245,-40){$I(B)_0$\hspace{10pt,}}

\put(106,-15){$If_2$} \put (180,-15){$If_1$}\put
(250,-15){$If_0$}\put (105,25){\vector(0,-1){53}}\put
(248,25){\vector(0,-1){53}}\put (177,25){\vector(0,-1){53}}

\put(93,30){$I(A)_2$}
\put(115,30){\vector(1,0){50}}\put(115,35){\vector(1,0){50}}
\put(130,21){$Ir^A$}\put(130,40){$Iq^A$}
\put(168,30){$I(A)_1$}
\put(190,35){\vector(1,0){50}}\put(190,30){\vector(1,0){50}}
\put(205,21){$Id^A$}\put(205,40){$Ic^A$}
\put(245,30){$I(A)_0$}\put(260,10){$(14.2)$}
\end{picture}
\vspace{80pt}

\noindent where the five squares $c^Aq^A=d^Ar^A$, $c^Bq^B=d^Br^B$, $Ic^AIq^A=Id^AIr^A$, $If_0\eta_{A,0}=\eta_{B,0}f_0$ and $If_1\eta_{A,1}=\eta_{B,1}f_1$ are pullbacks.

Then, the square $If_2\eta_{A,2}=\eta_{B,2}f_2$ is also a pullback.\footnote{The notation used in diagram $(14.2)$ is arbitrary and was chosen to make the application of Lemma \ref{lemma:remaining pullbacks} to this section easily understandable.}

\end{lemma}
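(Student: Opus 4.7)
The approach is to verify directly the universal property of the pullback for the remaining square; the lemma is essentially a ``Fubini for limits'' assertion, since $A_2$ is the limit of a $2\times 2$ diagram of binary pullbacks, and that limit can be formed in either order. Given a test cone $\alpha\colon T\to B_2$, $\beta\colon T\to I(A)_2$ with $\eta_{B,2}\circ\alpha = If_2\circ\beta$, I would construct the mediating morphism $\gamma\colon T\to A_2$ in three stages. \emph{Stage 1:} post-compose $\alpha$ with $q^B, r^B$ and $\beta$ with $Iq^A, Ir^A$. The ``vertical'' commutativities of the parallelepiped ($\eta_{B,1}\circ q^B = Iq^B\circ\eta_{B,2}$, $If_1\circ Iq^A = Iq^B\circ If_2$, and the $r$-analogues) imply that the pairs $(q^B\alpha, Iq^A\beta)$ and $(r^B\alpha, Ir^A\beta)$ satisfy the compatibility required by the pullback $A_1 = B_1\times_{I(B)_1}I(A)_1$. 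This yields unique mediators $\gamma_q, \gamma_r\colon T\to A_1$ with $f_1\gamma_q = q^B\alpha$, $\eta_{A,1}\gamma_q = Iq^A\beta$, and similarly for $r$.

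\emph{Stage 2:} to obtain $\gamma\colon T\to A_2$ from the pullback $A_2 = A_1\times_{A_0}A_1$, I must verify $c^A\gamma_q = d^A\gamma_r$. By the pullback $A_0 = B_0\times_{I(B)_0}I(A)_0$, it suffices to check $f_0 c^A\gamma_q = f_0 d^A\gamma_r$ and $\eta_{A,0}c^A\gamma_q = \eta_{A,0}d^A\gamma_r$. The first reduces, via $f_0 c^A = c^B f_1$ and $f_0 d^A = d^B f_1$, to $c^B q^B\alpha = d^B r^B\alpha$, which holds because $c^B q^B = d^B r^B$; the second reduces, via $\eta_{A,0}c^A = Ic^A\eta_{A,1}$ and its $d$-analogue, to $Ic^A Iq^A\beta = Id^A Ir^A\beta$, which holds because $Ic^A Iq^A = Id^A Ir^A$.

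\emph{Stage 3:} the identities $f_2\gamma = \alpha$ and $\eta_{A,2}\gamma = \beta$ are checked by post-composing with projections: for the first, use $q^B, r^B$ and the pullback $B_2 = B_1\times_{B_0}B_1$; for the second, use $Iq^A, Ir^A$ and the pullback $I(A)_2 = I(A)_1\times_{I(A)_0}I(A)_1$. In both cases the defining properties of $\gamma_q, \gamma_r$ from Stage~1 close the chase. Uniqueness of $\gamma$ is the dual argument: any other mediator would, after post-composition with $q^A$ and $r^A$, be forced by uniqueness in the pullback defining $A_1$ to equal $\gamma_q$ and $\gamma_r$, and so must coincide with $\gamma$ by uniqueness in $A_2 = A_1\times_{A_0}A_1$.

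The main difficulty is purely bookkeeping: six pullback universal properties must be invoked in sequence, each matched to a specific face of the parallelepiped or to one of the commutativity identities. Note that the argument never uses that $I(B)_2 = I(B)_1\times_{I(B)_0}I(B)_1$, so the lemma holds exactly as stated with only the five pullback hypotheses given. There is no conceptual subtlety beyond the general principle that limits commute with limits.
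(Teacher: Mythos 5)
Your proof is correct and is exactly the diagram chase the paper invokes (the paper's proof reads in full ``The proof is obtained by an obvious diagram chase''); your three stages use precisely the five listed pullbacks (the index-$1$ square for the mediators $\gamma_q,\gamma_r$, the index-$0$ square for joint monicity into $A_0$, the $A$-square to assemble $\gamma$, and the $B$- and $I(A)$-squares for the final identifications and uniqueness), and your observation that the $I(B)$-square is never needed matches the lemma's hypotheses.
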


\begin{proof} The proof is obtained by an obvious diagram chase.
\end{proof}

\begin{proposition}\label{proposition:trivial coverings}
A $n$-functor $f:A\rightarrow B$ is a trivial covering for the reflection $H\vdash I:nCat\rightarrow nPreord$ (in notation, $f\in \mathcal{M}_I$) if and only if, for every two objects $h$ and $h'$ in $A(P_{n-1})$ with $Hom_{A(P^{n(n-1)})}(h,h')$ nonempty, the map
$$Hom_{A(P^{n(n-1)})}(h,h')\rightarrow Hom_{B(P^{n(n-1)})}(f_1h,f_1h')$$
induced by $f$ is a bijection.
\end{proposition}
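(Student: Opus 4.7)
The plan is to exploit the pointwise calculation of limits in $nCat$ (Corollary~\ref{corollary:limits nCat}), which reduces the condition $f\in\mathcal{M}_I$ to the assertion that each of the pointwise squares $(D_l)$, $(D_{ji})$, $(D_{ki})$, $(D_{kj})$, $(D_{kji})$ enumerated just before Lemma~\ref{lemma:remaining pullbacks} is a pullback in $Set$. The reflection $I$ only modifies top-dimensional cells, so $\eta_A$ and $\eta_B$ are identities on every pointwise component involving only indices strictly smaller than $n$; consequently all squares whose pointwise object carries no $n$-cell data are automatically pullbacks.

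For necessity, suppose $f\in\mathcal{M}_I$, so that the defining square (14.1) is a pullback in $nCat$. Restricting pointwise to the precategory $P^{n(n-1)}$ yields a pullback square in $Cat$, and the statement then follows from \cite[Proposition 3.1]{X:ml}, which characterizes such $Cat$-pullbacks for the reflection $Cat\to Preord$ in exactly the bijection terms stated.

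For sufficiency, assume the bijection condition. By the same reference, the restriction of (14.1) to $P^{n(n-1)}$ is a $Cat$-pullback, giving the pointwise pullbacks $(D_{n-1})$, $(D_n)$, $(D_{n(n-1)})$. Combined with the trivial cases (indices all below $n$), the remaining squares to verify are those whose pointwise object involves $n$-cells, namely $(D_{nj})$ for $j<n-1$, $(D_{ni})$ for $i<n-1$, and $(D_{nji})$ for $0\le i<j<n$. Each is handled by Lemma~\ref{lemma:remaining pullbacks} applied to a suitably chosen precategory inside some 2-precategory diagram $(10.2)_{n,j,i}$: the vertical $P^{nj}$ yields $(D_{nj})$ from pullbacks on $P_j$ and $P_n$; the horizontal $P^{ni}$ yields $(D_{ni})$ from pullbacks on $P_i$ and $P_n$; and $P^{nji}$ horizontally (or $P^{niji}$ vertically) yields $(D_{nji})$ from pullbacks already established at the preceding stages. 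In every application the three precategory-is-pullback hypotheses of Lemma~\ref{lemma:remaining pullbacks} hold because $A$, $B$, and $I(A)$ are $n$-categories, and the two $\eta$-squares needed as inputs have been verified earlier in the chain.

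The main obstacle I expect is the dependency bookkeeping in the sufficiency step: arranging the applications of Lemma~\ref{lemma:remaining pullbacks} in an order where each application's two input $\eta$-pullbacks have already been verified. Once the order (first the $P_l$-squares, then the $P_{nj}$-squares, then the $P_{ni}$-squares, and finally the $P_{nji}$-squares) is fixed, every verification is routine. The overall argument is a direct generalization of the case $n=2$ treated in \cite{X:2ml}, and requires no new ideas beyond Corollary~\ref{corollary:limits nCat} and Lemma~\ref{lemma:remaining pullbacks}.
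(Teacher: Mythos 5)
Your proposal is correct and follows essentially the same route as the paper: the trivial pullbacks on components not involving $n$-cells, the reduction of the $n$-cell component to the $Cat$-pullback criterion of \cite[Proposition 3.1]{X:ml} on $P^{n(n-1)}$, and Lemma \ref{lemma:remaining pullbacks} for the remaining squares. Your explicit ordering of the applications of Lemma \ref{lemma:remaining pullbacks} merely spells out the dependency that the paper leaves implicit.
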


\begin{proof}
In the considerations just above, it was showed that the statement warrants that the square $(D_n)$ is a pullback, adding to the fact that $(D_0),(D_1),...,(D_{n-1})$ are all pullbacks.

Then, all the others squares $(D_{ji}),(D_{kj}),(D_{ki})$ and $(D_{kji})$ must also be pullbacks according to Lemma \ref{lemma:remaining pullbacks}, $0\leq i<j<k\leq n$.
\end{proof}

\section{Coverings for $n$-categories via $n$-preorders}
\label{sec:Coverings}

A $n$-functor $f:A\rightarrow B$ belongs to the class $\mathcal{M}^*_I$ of coverings (with respect to the reflection $H\vdash I:nCat\rightarrow n
Preord$) if there is some effective descent morphism (also called monadic extension in categorical Galois theory) $p:C\rightarrow B$ in $nCat$ with codomain $B$ such that the pullback $p^*(f):C\times_BA\rightarrow C$ of $f$ along $p$ is a trivial covering ($p^*(f)\in\mathcal{M}_I$).\\

The following Lemma \ref{lemma:sufficient condition for being covering} can be found in \cite[Lemma 4.2]{X:ml}, in the context of the reflection of categories into preorders, and for $n$-categories via $n$-preorders the proof is exactly the same, since the same conditions hold (cf.\ Theorem \ref{theorem:stable units} and Example \ref{example:EDM(nCat)}). The next Proposition \ref{proposition:coverings} characterizes the coverings for $n$-categories via $n$-preorders.

\begin{lemma}\label{lemma:sufficient condition for being covering}

A $n$-functor $f:A\rightarrow B$ in $nCat$ is a covering (for the reflection $H\vdash I:nCat\rightarrow nPreord$) if and only if, for every $n$-functor $\varphi :X\rightarrow B$ over $B$ from any $n$-preorder $X$, the pullback $X\times_BA$ of $f$ along $\varphi$ is also a $n$-preorder.

\end{lemma}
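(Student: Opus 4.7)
The plan is to prove the two implications separately, leveraging two ingredients: the effective descent morphisms with $n$-preorder domain constructed in Example \ref{example:EDM(nCat)}, and the observation that every morphism between two $n$-preorders is automatically a trivial covering (the naturality square with the unit $\eta$ reduces to an identity square, hence is trivially a pullback).

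For the ``if'' direction, Example \ref{example:EDM(nCat)} supplies an e.d.m.\ $p\colon C\to B$ with $C\in nPreord$; the hypothesis forces $C\times_B A\in nPreord$, so the pullback $p^*(f)\colon C\times_B A\to C$ is a morphism in $nPreord$ and therefore a trivial covering. By definition of $\mathcal{M}^*_I$ this exhibits $f$ as a covering.

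For the ``only if'' direction, start from an e.d.m.\ $p\colon C\to B$ witnessing that $f$ is a covering. Composing $p$ with an e.d.m.\ from $nPreord$ onto $C$ (again Example \ref{example:EDM(nCat)}) I may assume $C\in nPreord$, since pullbacks of trivial coverings are trivial and composites of e.d.m.s are e.d.m.s. Given $\varphi\colon X\to B$ with $X\in nPreord$, set $P=X\times_B A$ and $Q=P\times_B C$. Two facts fall out immediately: first, $Q\to P$ is an e.d.m., being the pullback of $p$; second, the pasting lemma rewrites $Q$ as $(X\times_B C)\times_C (C\times_B A)$, so $Q\to X\times_B C$ is the pullback of the trivial covering $p^*(f)$ and is therefore itself trivial. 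Since $nPreord$ is reflective in $nCat$, it is closed under limits there, so $X\times_B C\in nPreord$; and because a trivial covering whose codomain lies in $nPreord$ has its domain in $nPreord$ (the bottom edge of the defining pullback square being an isomorphism forces the top edge to be one), we conclude $Q\in nPreord$.

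The main obstacle is the final step: passing from $Q\in nPreord$ together with the e.d.m.\ $Q\to P$ to $P\in nPreord$. This closure is not available for arbitrary e.d.m.s (a disjoint union of arrows is a preorder that maps onto two parallel arrows by an e.d.m.), so one must exploit the particular shape $Q=P\times_B C$ with $C\in nPreord$. Given parallel $n$-cells $\alpha_1,\alpha_2$ in $P$ sharing $(n-1)$-source and target, I would invoke the surjectivity criterion of Proposition \ref{proposition:EDM(nCat)} to lift the shared image in $B$ to an $n$-cell $\gamma$ of $C$ between chosen lifts of the boundary data; uniqueness of $\gamma$ with those endpoints is forced by $C\in nPreord$. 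Then $(\xi,\alpha_1,\gamma)$ and $(\xi,\alpha_2,\gamma)$, where $\xi$ is the unique arrow in $X$ between the $X$-components of the endpoints, become parallel $n$-cells in $Q$, and $Q\in nPreord$ collapses them, giving $\alpha_1=\alpha_2$. The delicate part is organising the lifting of the entire boundary skeleton so that the two triples really are parallel in $Q$, which is precisely the argument of \cite[Lemma 4.2]{X:ml} propagated through the higher dimensions.
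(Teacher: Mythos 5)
The paper does not actually prove this lemma: it states that the proof of \cite[Lemma 4.2]{X:ml} carries over verbatim, the needed ingredients being Theorem \ref{theorem:stable units} and Example \ref{example:EDM(nCat)}. Your proposal is therefore more explicit than the paper, and its overall architecture is the right one. The ``if'' direction is correct and is surely what the cited proof does: take the e.d.m.\ $p\colon C\to B$ of Example \ref{example:EDM(nCat)} with $C\in nPreord$, observe that any $n$-functor between $n$-preorders is a trivial covering, and conclude. The ``only if'' direction also has the right skeleton (reduce to a witnessing e.d.m.\ with $n$-preorder domain, form $Q=P\times_B C$, show $Q\in nPreord$, and collapse parallel $n$-cells of $P$ by pairing them with a common lift in $C$).

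There is, however, one genuine gap in the last step. You ``invoke the surjectivity criterion of Proposition \ref{proposition:EDM(nCat)}'' to lift the common image $\beta\in B(P_n)$ of $\alpha_1,\alpha_2$ to an $n$-cell $\gamma$ of $C$. But Proposition \ref{proposition:EDM(nCat)} only gives a \emph{sufficient} condition for being an e.d.m.; the witnessing e.d.m.\ $p$ coming from the definition of covering is arbitrary, and nothing in the paper asserts that an arbitrary e.d.m.\ in $nCat$ is surjective on $n$-cells. This is true, but needs its own argument (for instance: every effective descent morphism is a regular, hence extremal, epimorphism; the set-theoretic image of an $n$-functor is a sub-$n$-category through which it factors by a monomorphism; so the image must be all of $B$), and your reduction to $C\in nPreord$ by precomposing with the Example \ref{example:EDM(nCat)} e.d.m.\ does not repair it, since that only controls surjectivity of $C'\to C$, not of $C\to B$. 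Two smaller points: $X\times_B C\in nPreord$ does not follow from closure of a reflective subcategory under limits (the cospan has vertex $B\notin nPreord$); it follows because $X\times_B C$ is a sub-$n$-category of the product $X\times C$, and the joint-monicity condition is inherited by sub-$n$-categories. And the step you flag as delicate is not: once \emph{any} lift $\gamma$ of $\beta$ is chosen, the cells $(\alpha_1,\gamma)$ and $(\alpha_2,\gamma)$ of $Q(P_n)=P(P_n)\times_{B(P_n)}C(P_n)$ are automatically parallel, because their $P$-components are parallel by hypothesis and their $C$-components coincide; no lifting of the boundary skeleton is required, and no uniqueness of $\gamma$ is used.
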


\begin{proposition}\label{proposition:coverings}

A $n$-functor $f:A\rightarrow B$ in $nCat$ is a covering (for the reflection $H\vdash I:nCat\rightarrow nPreord$) if and only if it is faithful with respect to $n$-cells, that is, for every pair $g,g'\in A(P_{n-1})$, the map $$Hom_{A(P^{n(n-1)})}(g,g')\rightarrow Hom_{B(P^{n(n-1)})}(f_{n-1}g,f_{n-1}g')$$ induced by $f$ is an injection.
\end{proposition}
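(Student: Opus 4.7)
The plan is to exploit Lemma \ref{lemma:sufficient condition for being covering}, which characterizes coverings as those $n$-functors $f$ for which the pullback $X \times_B A$ is an $n$-preorder whenever $X$ is. Combined with Corollary \ref{corollary:limits nCat}, which says limits in $nCat$ are computed pointwise in $\hat{n\mathbb{P}}$, this reduces everything to a pointwise analysis at level $n$. Recall that being an $n$-preorder amounts to: between any two parallel $(n-1)$-cells there is at most one $n$-cell.

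For sufficiency, assume $f$ is faithful on $n$-cells and take any $n$-preorder $X$ with an $n$-functor $\varphi: X \to B$. Two $n$-cells in $X \times_B A$ sharing common $(n-1)$-cell boundaries are pairs $(x,\alpha)$ and $(x',\alpha')$ with $\varphi_{P_n}(x) = f_{P_n}(\alpha)$, $\varphi_{P_n}(x') = f_{P_n}(\alpha')$, such that the pointwise source/target projections agree; this forces $x, x'$ to be parallel in $X$ and $\alpha,\alpha'$ parallel in $A$. Since $X$ is an $n$-preorder, $x = x'$, hence $f_{P_n}(\alpha) = \varphi_{P_n}(x) = \varphi_{P_n}(x') = f_{P_n}(\alpha')$, and faithfulness on $n$-cells then yields $\alpha = \alpha'$. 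Thus $X \times_B A$ is an $n$-preorder.

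For necessity, I argue contrapositively. Suppose there exist distinct parallel $n$-cells $\theta \neq \theta'$ in $A(P_n)$, with common source $g$ and target $g'$ in $A(P_{n-1})$, such that $f_{P_n}(\theta) = f_{P_n}(\theta')$. I would construct an $n$-preorder $X$ together with an $n$-functor $\varphi: X \to B$ by taking the smallest $n$-subcategory of $B$ containing $f_{n-1}(g)$ and $f_{n-1}(g')$ as parallel $(n-1)$-cells and the single $n$-cell $f_{P_n}(\theta) = f_{P_n}(\theta')$ between them, saturated under all lower-dimensional compositions and identities in $B$ needed to form a genuine $n$-category, but keeping the preorder property at level $n$. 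This is exactly the kind of minimal construction already used in Example \ref{example:EDM(nCat)} for building the $n$-preorders $v\mathbf{4}_{nji}$ and $h\mathbf{4}_{nji}$. Letting $\varphi$ be the inclusion into $B$, the pointwise pullback $X \times_B A$ contains both $(f_{P_n}(\theta),\theta)$ and $(f_{P_n}(\theta),\theta')$ as distinct $n$-cells sharing the boundaries $(f_{n-1}(g),g)$ and $(f_{n-1}(g'),g')$, so it fails to be an $n$-preorder, contradicting Lemma \ref{lemma:sufficient condition for being covering}.

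The main obstacle is the necessity direction: ensuring that the small $n$-preorder $X$ can indeed be built without inadvertently identifying $\theta$ with $\theta'$ or introducing other unintended collapses in the pullback. This is handled by mimicking the construction in Example \ref{example:EDM(nCat)}, intersecting all $n$-preorder structures on the ambient $(n-1)$-cells that contain the distinguished single $n$-cell; the existence of the trivial maximal preorder (relating every parallel pair of $(n-1)$-cells) guarantees that this intersection yields a well-defined minimal $n$-preorder $X$, and repleteness at lower dimensions together with the cartesian pointwise nature of limits (Corollary \ref{corollary:limits nCat}) ensures the pullback realizes both $\theta$ and $\theta'$ as claimed.
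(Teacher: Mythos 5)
Your proof is correct and follows essentially the same route as the paper: both directions reduce to Lemma \ref{lemma:sufficient condition for being covering}, with sufficiency checked pointwise at level $n$ using that limits are computed componentwise, and necessity handled by pulling back along the inclusion of the minimal $n$-preorder generated (in the sense of Example \ref{example:EDM(nCat)}) by a single $n$-cell between $f_{n-1}g$ and $f_{n-1}g'$. Your write-up is in fact more explicit than the paper's, which compresses the sufficiency direction to ``given the nature of $X$'' and the necessity direction to the same counterexample you construct.
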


\begin{proof}
Consider again the $n$-preorder $T$ generated\footnote{Cf.\ the footnote in the proof of Proposition \ref{proposition:stably-vertical}.} by the diagram \begin{picture}(60,25)(0,0)

\put (0,0){$a$}\put (50,0){$a'$.}

\put(10,12){\vector(1,0){35}}\put(25,16){$h$}
\put(10,-6){\vector(1,0){35}}\put(25,-16){$h'$}

\put(23,0){$\Downarrow$}\put(32,0){$\leq$}

\end{picture}\vspace{20pt}

If $f$ is not faithful with respect to $n$-cells (in the sense of the statement), then, by including $T$ in $B$, we could obtain a pullback $T\times_BA$ that is not a $n$-preorder; and so, $f$ would not be a covering, by the previous Lemma \ref{lemma:sufficient condition for being covering}.\\

Reciprocally, consider any $n$-functor $\varphi :X\rightarrow B$ such that $X$ is a $n$-preorder. If $f$ is faithful with respect to $n$-cells (in the sense of the statement), then the pullback $X\times_BA$ is a $n$-preorder, given the nature of $X$. Hence, $f$ is a covering, by the previous Lemma \ref{lemma:sufficient condition for being covering}.
\end{proof}

\section*{Acknowledgement}
This work was supported by
 The Center for Research and Development in Mathematics and Applications (CIDMA) through the Portuguese Foundation for Science and Technology

(FCT - Funda\c{c}\~ao para a Ci\^encia e a Tecnologia),

references UIDB/04106/2020 and UIDP/04106/2020.

\end{document}